\begin{document}

\theoremstyle{plain}
   \newtheorem{thm}{Theorem}[section]
   \newtheorem{prop}[thm]{Proposition}
   \newtheorem{lem}[thm]{Lemma}
   \newtheorem{cor}[thm]{Corollary}
   \newtheorem{conj}[thm]{Conjecture}
   \newtheorem{problem}[thm]{Problem}
\theoremstyle{definition}
   \newtheorem{deftn}[thm]{Definition}
   \newtheorem{example}[thm]{Example}
   \newtheorem{examples}[thm]{Examples}
   \newtheorem{question}[thm]{Question}
   \newtheorem{algorithm}[thm]{Algorithm}
   \newtheorem{rk}[thm]{Remark}
   \newtheorem{obs}[thm]{Observation}

\numberwithin{equation}{section}

\newcommand\comp[2]{\alpha{(#2,#1)}}
\newcommand\ccomp[2]{\alpha^\cyc{(#2,#1)}}

\newcommand{\CC}{{\mathbb {C}}}
\newcommand{\QQ}{{\mathbb {Q}}}
\newcommand{\RR}{{\mathbb {R}}}
\newcommand{\ZZ}{{\mathbb {Z}}}
\newcommand{\NN}{{\mathbb {N}}}

\newcommand{\EE}{{\mathcal{E}}}
\newcommand{\PP}{{\mathcal {P}}}

\newcommand{\Cyl}{\operatorname{Cyl}}
\newcommand{\Tor}{\operatorname{Tor}}

\newcommand{\then}{\Rightarrow}

\newcommand{\Aut}{{\operatorname{Aut}}}
\newcommand{\Des}{{\operatorname{Des}}}
\newcommand{\cDes}{{\operatorname{cDes}}}
\newcommand{\cyc}{{\operatorname{cyc}}}
\newcommand{\cdes}{{\operatorname{cdes}}}
\newcommand{\des}{{\operatorname{des}}}
\newcommand{\ch}{{\operatorname{ch}}}
\newcommand{\shape}{{\operatorname{shape}}}
\newcommand{\SYT}{{\operatorname{SYT}}}
\newcommand{\co}{{\operatorname{co_n}}}
\newcommand{\cc}{{\operatorname{cc}_n}}
\newcommand{\SSYT}{{\rm SSYT}}
\newcommand{\tchi}{{\widetilde{\chi}}}
\newcommand{\td}{{\widetilde{D}}}
\newcommand{\bd}{{\bar{D}}}
\newcommand{\klr}{{\rm KLR}}

\newcommand{\Comp}{{\operatorname{Comp}}}
\newcommand{\cComp}{{\operatorname{cComp}}}
\newcommand{\ccon}{{\operatorname{cyc-comp}(n)}}

\newcommand{\one}{{\mathbf{1}}}

\newcommand{\cs}{{\tilde{s}}}

\newcommand{\xx}{{\mathbf{x}}}
\newcommand{\ttt}{{\mathbf{t}}}
\newcommand{\symm}{{\mathfrak{S}}}

\def\AA{\mathbb{A}}

\newcommand{\AAA}{{\mathcal{A}}}
\newcommand{\EEE}{{\mathcal{E}}}
\newcommand{\CCC}{{\mathcal{C}}}
\newcommand{\LLL}{{\mathcal{L}}}
\newcommand{\MMM}{{\mathcal{M}}}
\newcommand{\OOO}{{\mathcal{O}}}
\newcommand{\TTT}{{\mathcal{T}}}
\newcommand{\shihao}{\fontsize{3.25pt}{\baselineskip}\selectfont}

\newcommand{\stirling}[2]{\left\{\begin{matrix}{#1}\\ {#2}\end{matrix}\right\}}

\DeclareRobustCommand{\stirlingI}{\genfrac \langle \rangle {0pt}{}}
\newcommand{\GaussBinomial}[2]{\left[\begin{matrix}{#1}\\ {#2}\end{matrix}\right]}

\newcommand{\SteinbergTorus}{{\widetilde{\bdelta}}}

\newcommand{\wcDes}{\cDes_*}
\newcommand{\wcdes}{\cdes_*}

\newcommand{\AffDes}{{\operatorname{cDes}}}

\newcommand{\udots}{\reflectbox{$\ddots$}}

\newcommand{\bT}{{\mathbf T}}
\newcommand{\bA}{{\mathbf A}}
\newcommand{\tensor}{\otimes}
\newcommand{\bPsi}{\boldsymbol\Psi}
\newcommand{\kk}{\Bbbk}
\newcommand{\lie}[1]{{\mathfrak{#1}}}

\newcommand\scalemath[2]{\scalebox{#1}{\mbox{\ensuremath{\displaystyle #2}}}}

\newlength{\mysizetiny}
\setlength{\mysizetiny}{0.3em}
\newlength{\mysizesmall}
\setlength{\mysizesmall}{0.8em}
\newlength{\mysize}
\setlength{\mysize}{1.3em}
\newlength{\mysizelarge}
\setlength{\mysizelarge}{2em}

\newcommand{\IZ}{\widehat{I}_{\mathbb{Z}}}
\newcommand{\Ixi}{\widehat{I}_{\leq \xi}}
\newcommand{\YZ}{\mathcal{Y}_{\mathbb{Z}}}
\newcommand{\tdxi}{\widetilde{D}_{\xi}}
\newcommand{\barD}{\overline{D}}
\newcommand{\barDstar}{\overline{D}^{*}}
\newcommand{\xik}{x_k^{\mathbf{i}}}
\newcommand{\xkop}{x_k^{op}}
\newcommand{\betakop}{\beta_k^{op}}
\newcommand{\CN}{\mathbb{C}[\mathbf{N}]}
\newcommand{\si}{\mathcal{S}^{\mathbf{i}}}
\newcommand{\ihat}{\widehat{\mathbf{i}}}
\newcommand{\HQ}{\mathcal{H}_Q}
\newcommand{\AQ}{\mathcal{A}_Q}
\newcommand{\MQ}{\mathcal{M}_Q}
\newcommand{\tMQ}{\widetilde{\mathcal{M}_Q}}
\newcommand{\ttMQ}{\widetilde{\widetilde{\mathcal{M}_Q}}}
\newcommand{\DQ}{\mathcal{D}_Q}
\newcommand{\IQ}{\mathcal{I}_Q}
\newcommand{\KQ}{\mathcal{K}_Q}
\newcommand{\TQ}{\mathcal{T}_Q}
\newcommand{\tCQ}{\widetilde{\mathcal{C}_Q}}
\newcommand{\ZQ}{\mathbb{Z}Q}
\newcommand{\ZQz}{(\mathbb{Z}Q)_0}
\newcommand{\CZ}{\mathcal{C}_{\mathbb{Z}}}
\newcommand{\CxiZ}{\mathcal{C}_{\mathbb{Z}}^{\leq \xi}}
\newcommand{\Cxi}{\mathcal{C}_{\xi}}
\newcommand{\YxiZ}{\mathcal{Y}_{\mathbb{Z}}^{\leq \xi}}
\newcommand{\CxiZstar}{\mathcal{C}_{\xi^{*}}^{\mathbb{Z}}}
\newcommand{\barCQ}{\overline{\mathcal{C}_Q}}
\newcommand{\tdKxi}{\widetilde{K}_{\xi}}
\newcommand{\tdKxistar}{\widetilde{K}_{\xi^{*}}}
\newcommand{\DbRepQ}{\mathcal{D}^b(\mathrm{Rep}Q)}
\newcommand{\xistar}{\xi^{*}}
\newcommand{\bkZQ}{\overline{k(\mathbb{Z}Q)}}
\newcommand{\xbar}{\overline{x}}
\newcommand{\ybar}{\overline{y}}
\newcommand{\tH}{\widetilde{\mathcal{H}}}
\newcommand{\ObQ}{\mathrm{Ob}_Q}
\newcommand{\ObCQ}{\mathrm{Ob} \left( \mathcal{C}_Q \right)}
\newcommand{\tHi}{\widetilde{H}_i}
\newcommand{\tWi}{\widetilde{W}_i}
\newcommand{\tHj}{\widetilde{H}_j}
 \newcommand{\bYx}{Y_{\bullet}(x)}
  \newcommand{\bFx}{F_{\bullet}(x)}
 \newcommand{\HQG}{\mathcal{C}_Q^{\Gamma}}
 \newcommand{\IG}{\mathcal{J}^{\Gamma}}
 \newcommand{\KQG}{\mathcal{K}_Q^{\Gamma}}
\newcommand{\mbeta}{\mathfrak{m}_{\beta}}
\newcommand{\bdelta}{\boldsymbol{\delta}}
\newcommand{\bomega}{\boldsymbol{\omega}}
\newcommand{\Qbeta}{Q_{\beta}}

   \newcommand{\Ybeta}{Y[\beta]}
   \newcommand{\Cbeta}{C_{\bullet}[\beta]}

    \newcommand{\odi}{\overline{\mathbf{d}_i}}
    \newcommand{\udi}{\underline{\mathbf{d}_i}}

\title[Triangulated monoidal categorifications]{Triangulated monoidal categorifications \\ of finite type cluster algebras}
\author{\'Elie Casbi}
\address{Oskar-Morgenstern Platz 1, 1090 Wien, Österreich}
\email{elie.casbi@univie.ac.at}
\date{}

 \begin{abstract}
 We propose a framework of monoidal categorification of finite type cluster algebras involving triangulated monoidal categories. Namely, given a Dynkin quiver $Q$, we consider the bounded homotopy category $\KQ^{(1)}$ of a symmetric monoidal category $\HQ^{(1)}$ that we define in terms of the Auslander-Reiten theory of $Q$. Using some iterated mapping cone procedure,  we construct a  distinguished family  $\{ \Cbeta \}_{\beta \in \Delta_+}$ of chain complexes in $\KQ^{(1)}$ characterized (up to isomorphism) by homological conditions similar to those of higher exact sequences appearing in the context of higher homological algebra. We then prove that the distinguished triangle in $\KQ^{(1)}$ given by each mapping cone categorifies an exchange relation in the finite type cluster algebra $\AQ$ with initial exchange quiver $Q$ (for a suitable choice of frozen variables). As a consequence, we obtain that for each positive root $\beta$, the Euler characteristic of $\Cbeta$ coincides with the truncated $q$-character of the simple module $L[\beta]$ in the HL category $\Cxi^{(1)}$ categorifying the cluster variable $x[\beta]$ of $\AQ$ via Hernandez-Leclerc's monoidal categorification. 
 Along the way, we establish a uniform formula for the dominant monomial of $L[\beta]$ in all types $A_n$ and $D_n$ for arbitrary orientations (agreeing with Brito-Chari's results in type $A_n$). 
 \end{abstract}

\maketitle

\section{Introduction}

 Monoidal categorifications of cluster algebras have been involved in several major developments over the past fifteen years. They were first introduced by Hernandez-Leclerc \cite{HL10} as a way to provide new insights in cluster theory using the representation theory of quantum affine algebras. Following \cite{HL10}, a monoidal categorification of a cluster algebra $\mathcal{A}$ consists of an abelian monoidal category $\mathcal{C}$ such that there is a ring isomorphism $\mathcal{A} \simeq K_0(\mathcal{C})$ from $\mathcal{A}$ to the Grothendieck ring of $\mathcal{C}$ sending the cluster monomials in $\mathcal{A}$ to isomorphism classes of real simple objects in $\mathcal{C}$  (real meaning of simple tensor square). In this framework, the exchange relations associated to cluster mutations are interpreted as identities in the Grothendieck ring of $\mathcal{C}$ coming from certain distinguished short exact sequences in $\mathcal{C}$. Moreover, the cluster expansion of a cluster monomial with respect to a well-chosen initial cluster can be identified with the $q$-character of  the corresponding (real) simple module under the isomorphism $\mathcal{A} \simeq K_0(\mathcal{C})$. The representation theory of quantum affine algebras (and recently shifted quantum affine algebras) provides powerful tools to construct abelian monoidal categorifications of Lie-theoretic cluster algebras. These categorifications can be viewed as subcategories of a certain monoidal category $\CZ$  introduced by Hernandez-Leclerc \cite{HL15}, which is a monoidal subcategory of the category of finite-dimensional representations of the quantum affine algebra associated to $\mathfrak{g}$.   
 Although Hernandez-Leclerc's original work \cite{HL10} mostly dealt with cluster algebras $\mathcal{A}_{Q^{bip}}$ of finite type $A_n$ associated with a bipartite initial exchange quiver, their results have later been extended to more sophisticated cluster structures. As far as finite type cluster algebras are concerned, Brito-Chari  \cite{BC} constructed monoidal categorifications $\Cxi^{(1)} \subset \CZ$ of any finite type cluster algebra $\AQ$ arising from an arbitrarily oriented initial exchange quiver $Q$ of type $A_n$ (here $\xi$ denotes a height function adapted to $Q$). It was later showed by Kashiwara-Kim-Oh-Park \cite{KKOPCompos20,KKOP1,KKOP2} that various subcategories of $\CZ$ were monoidal categorifications of interesting cluster structures, including the category $\Cxi^{(1)}$ in all Dynkin types and for arbitrary orientations. 
 
  The constructions of monoidal categorifications known so far can essentially be grouped into two groups. On the one hand, categories of modules over associative algebras such as (shifted) quantum affine algebras \cite{HL10,BC,KKOPCompos20} or quiver Hecke (KLR) algebras  \cite{KKKO}. On the other hand, categories of perverse sheaves on algebraic varieties naturally appearing in geometric representation theory such as Nakajima quiver varieties or affine Grassmannians \cite{Naka,CW}.
  In both cases the  categories involved are in particular abelian categories. In the present work, we propose an alternative framework relying on triangulated monoidal categories, namely,  homotopy categories of certain (symmetric) monoidal categories. The main motivation for the construction proposed in the present work stems from our desire to provide a conceptual interpretation of our recent results with Jian-Rong Li \cite{CasbiLi2}. 

 In our previous work \cite{CasbiLi}, given a simply-laced type Lie algebra $\mathfrak{g}$ and an orientation $Q$ of its Dynkin diagram, we introduced  an algebra homomorphism $\td_Q : \YZ \longrightarrow \mathbb{C}[\mathfrak{t}^{reg}]$ which is intimately related to the map $\barD$ defined in \cite{BKK} by Baumann-Kamnitzer-Knutson  in their study of the equivariant homology of Mirkovi\'c-Vilonen cycles. Here $\YZ$ denotes the ring of Laurent polynomials containing the $q$-characters of all objects in the Hernandez-Leclerc's $\CZ$.  In our following work \cite{CasbiLi2}, we used the map $\td_Q$ to exhibit new families of non-trivial rational identities out of the representation theory of quantum affine algebras. More precisely, we showed that for any standard module $M$ in $\CZ$, we have 
 $$ \td_Q \left( \chi_q(M) \right) = 0 .  $$
  The starting point of the present work consists in viewing these identities as the vanishing of certain alternate sums of dimension vectors (after getting rid of the denominators). In other words, to each object in $\CZ$ should correspond a chain complex satisfying some kind of exactness property, and whose objects should keep track (in a bijective way) of the Laurent monomials appearing in the $q$-character of $M$. This last condition shall be ensured by using the interpretation of $q$-characters as cluster expansions with respect to well-chosen cluster structures (which is one of the key upshots provided by Hernandez-Leclerc's monoidal categorifications). Note that certain special families of exact chain complexes (higher Auslander-Reiten sequences) involving tensor products of modules over the path algebra $\mathbb{C}Q$ have been considered in \cite{Pasquali17}. However, they do not seem to be  appropriate for our needs. 
  
  The above discussion is our original motivation for introducing the framework of triangulated monoidal categorification of cluster algebras, whose main idea  can be summarized as follows. Given a finite acyclic quiver $Q$, we begin by constructing a symmetric $\mathbf{k}$-linear monoidal category $\HQ$ out of the Auslander-Reiten theory of $Q$, whose objects and morphisms are defined in a way to behave similarly as tensor products (over $\mathbf{k}$) of modules over the path algebra $\mathbf{k}Q$. Then, considering the bounded homotopy category $\mathcal{K}$ of a suitably chosen monoidal subcategory $\mathcal{C}$ of $\HQ$, one shall construct chain complexes in $\mathcal{K}$ characterized  by suitable exactness properties. Namely, starting from an initial collection of chain complexes concentrated in a single homological degree, these non-trivial chain complexes shall be constructed through some iterated mapping cone procedure (involving the monoidal structure of $\HQ$), which crucially relies on the aforementioned  exactness conditions. Each  mapping cone involved in this process yields a  distinguished triangle in $\mathcal{K}$ that shall categorify some exchange relation associated with a cluster mutation in  an appropriate cluster algebra $\mathcal{A}$. Consequently, the Euler characteristics of the constructed chain complexes shall coincide with certain cluster characters in $\mathcal{A}$.

  We now briefly present the  monoidal category $\HQ$ that will be playing the role of ambient category throughout this paper. Given a finite acyclic quiver $Q$, let us denote by $\ZQ$ the repetition quiver of $Q$ and by $\DbRepQ$ the bounded derived category of the category of finite dimensional representations of $Q$ over the field of complex numbers. We first construct a certain  $\mathbb{C}$-linear symmetric monoidal category $\MQ$, whose indecomposable objects  are given by pairs $(X,h)$ where $X$ is a multiset of indecomposable objects in $\DbRepQ$ and $h : \ZQz \rightarrow \mathbb{Z}$ is a function that will usually be required to satisfy a certain property called \textbf{quasi-additivity} (cf. Definition~\ref{def : quasi-additive functions}). Quasi-additive functions generalize the well-known combinatorics of  additive functions on repetition quivers studied by Gabriel \cite{Gabriel}, Brenner \cite{Brenner} and Ringel \cite{Ringel}.  We define a certain operation on such pairs $(X,h)$, called \textbf{Serre tilting}, which consists in replacing finitely many elements of $X$ by their image under the standard Serre functor of $\DbRepQ$, as well as modifying  $h$ in a suitable way. Then, given two multisets $X = \{x_k, k \in  K\}$ and $Y = \{y_l , l \in L\}$ and two quasi-additive  functions $f$ and $g$, the space of  morphisms between $(X,f)$ and $(Y,g)$ is non-trivial only if $(Y,g)$ is a Serre tilting of $(X,f)$, and if so, then it is given by
 \begin{equation}
     \bigoplus_{  \substack{ \sigma \in \mathrm{Bij}(K,L) \\  \sharp \mathrm{Supp}_{X,Y}(\sigma) < \infty }} \bigotimes_{k \in \mathrm{Supp}_{X,Y}(\sigma)} \mathrm{Hom}_{\DbRepQ}(x_k , y_{\sigma(k)}) 
 \end{equation}
where $\mathrm{Bij}(K,L)$ denotes the set of bijections from $K$ to $L$ and $\mathrm{Supp}_{X,Y}(\sigma) := \{k \in K \mid y_{\sigma(k)} \neq x_k \}$ for any $\sigma \in \mathrm{Bij}(K,L)$. The category $\HQ$ that will serve as underlying category for the framework proposed in this paper will be a certain monoidal full subcategory of $\MQ$ whose indecomposable objects will be given by appropriate Serre tiltings of certain distinguished objects $Y(x) := (H(x),h_x)$, $ x \in \ZQz$ in $\MQ$ called \textbf{hammock objects}. For each $x \in \ZQz$ (viewed as the isomorphism class of an indecomposable object in $\DbRepQ$ via Happel's theorem), the multiset $H(x)$  is obtained by considering the codomains of all non-trivial morphisms in $\DbRepQ$ having domain $x$, while the function $h_x$ is essentially a hammock function (hence the terminology of hammock object) considered by Gabriel, Brenner and Ringel \cite{Gabriel, Brenner,Ringel}. Objects in $\HQ$ that are isomorphic to tensor products of hammock objects will be called \textbf{dominant objects}.

  In this paper, we will be mostly working on constructing  triangulated monoidal categorifications of finite type cluster algebras $\AQ$ with arbitrarily oriented initial exchange quiver $Q$. For this purpose, we will be focusing on a certain additive monoidal (but not full) subcategory $\HQ^{(1)}$ of $\HQ$ that depends only on the choice of a section of $Q^{op}$ inside $\ZQ$. We will then  consider the monoidal triangulated category  $\KQ^{(1)}$ defined as the bounded homotopy category of $\HQ^{(1)}$. A key property that will eventually characterize the objects in $\KQ^{(1)}$ categorifying the cluster variables in $\AQ$  is the notion that we call  \textbf{$\HQ^{(1)}$-exactness}. It can be viewed as a slightly weakened version of the notion of $n$-exactness of chain complexes appearing in higher homological algebra \cite{Iyama07,Jasso16,GKO13}. 
  Given a bounded chain complex $C_{\bullet}$, this exactness property consists in requiring the following conditions: firstly, we should have $C_n=0$ for all $n<0$ and $C_0$ should be a dominant object in $\HQ^{(1)}$; secondly (and most importantly), a certain class of morphisms whose composition with $d_{n-1}^{C_{\bullet}}$ vanishes are required to factor through $d_n^{C_{\bullet}}$. This extra condition is essentially combinatorial and entirely  depends on $C_0$. We also introduce a slightly stronger version (strong $\HQ^{(1)}$-exactness) designed to ensure the uniqueness up to isomorphism of the considered chain complex. We refer to Definitions~\ref{def : CQ1 exact chain complexes} and~\ref{def : strong exactness} for more details. 
  Of particular importance will be certain dominant objects $\Ybeta$ in $\HQ^{(1)}$ parametrized by elements of the positive orthant of the root lattice. 
  The first main result of this paper will be the following.

  \begin{thm} \label{thm : thm1 intro}
      For each positive (or negative simple) root $\beta$, there exists a unique (up to isomorphism) strongly $\HQ^{(1)}$-exact complex $C_{\bullet}[\beta]$ in $\KQ^{(1)}$ such that $C_n[\beta]=0$ if $n<0$ and $C_0[\beta]$ is indecomposable and isomorphic to $Y[\beta]$. 
  \end{thm}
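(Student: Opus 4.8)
The plan is to establish existence and uniqueness separately, each by induction along a well-founded partial order on almost positive roots refining both the natural order governing the cluster structure of $\AQ$ and the Auslander--Reiten recursion on $\ZQ$. Throughout, one exploits the fact that the distinguished class of morphisms entering the factoring condition of Definition~\ref{def : strong exactness} depends \emph{only} on the degree-zero term of the complex, which for $\Cbeta$ is prescribed to be $\Ybeta$.

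For existence, the base of the induction consists of the negative simple roots, for which one takes $\Cbeta$ to be the one-term complex equal to $\Ybeta$ in homological degree $0$; that this complex is strongly $\HQ^{(1)}$-exact is immediate, the factoring conditions in positive degrees being vacuous and the degree-zero requirement holding because $\Ybeta$ is dominant by construction. For the inductive step, one uses the combinatorics of $\ZQ$ to produce, from the exchange relation in $\AQ$ expressing $x[\beta]$ in terms of already-treated cluster variables, a morphism of chain complexes $f \colon A_\bullet \to B_\bullet$ in $\KQ^{(1)}$ whose terms are tensor products (within the monoidal category $\HQ^{(1)}$) of previously constructed complexes $C_\bullet[\gamma]$ with $\gamma$ strictly below $\beta$; one then defines $\Cbeta$ to be the mapping cone of $f$, up to the shift fixing the homological normalization. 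Two points must then be verified. First, that $C_0[\beta]$ is indecomposable and isomorphic to $\Ybeta$: this is the combinatorial heart of the argument, and it amounts to showing that the degree-zero part of the cone --- controlled by the degree-zero parts of $A_\bullet$ and $B_\bullet$ together with the relevant Serre tilting --- reproduces exactly the hammock object $\Ybeta$; here quasi-additivity of the hammock functions $h_x$ (Definition~\ref{def : quasi-additive functions}) is precisely what forces the bookkeeping to match the exchange relation. Second, that $\Cbeta$ is strongly $\HQ^{(1)}$-exact: using the distinguished triangle $A_\bullet \to B_\bullet \to \Cbeta \to A_\bullet[1]$, the associated long exact sequences and the octahedral axiom, one propagates to $\Cbeta$ the strong $\HQ^{(1)}$-exactness of $A_\bullet$ and $B_\bullet$ (the induction hypothesis, together with the stability of the relevant exactness under the tensor operations and shifts used in the construction), the factoring condition for $\Cbeta$ being checked against its degree-zero term $\Ybeta$, which has just been identified.

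For uniqueness, suppose $\Cbeta$ and $C'_\bullet$ are both strongly $\HQ^{(1)}$-exact with $C_0[\beta] \cong C'_0 \cong \Ybeta$ indecomposable; write $d$ and $d'$ for the respective differentials. Starting from an isomorphism $\phi_0 \colon C_0[\beta] \to C'_0$ and its inverse, one builds chain maps $\phi \colon \Cbeta \to C'_\bullet$ and $\psi \colon C'_\bullet \to \Cbeta$ degree by degree: at each stage the composite $\phi_{n-1}\, d_n$ lies in the distinguished class of morphisms annihilated by $d'_{n-1}$, hence factors through $d'_n$ by strong $\HQ^{(1)}$-exactness of $C'_\bullet$, which produces $\phi_n$ (and symmetrically for $\psi$). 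The endomorphisms $\psi\phi - \mathrm{id}_{\Cbeta}$ and $\phi\psi - \mathrm{id}_{C'_\bullet}$ vanish in homological degree $0$, and strong $\HQ^{(1)}$-exactness forces any chain endomorphism of such a complex that vanishes in degree $0$ to be null-homotopic; hence $\phi$ is a homotopy equivalence, that is, an isomorphism in $\KQ^{(1)}$.

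The step I expect to be the main obstacle is the first verification in the inductive step: proving that the iterated mapping-cone procedure returns precisely $\Ybeta$ in homological degree $0$, equivalently, that the monoidal structure of $\HQ^{(1)}$ together with Serre tilting genuinely categorifies the finite-type exchange relations. This requires a fine analysis of the combinatorics of hammock and quasi-additive functions on $\ZQ$, and is substantially more delicate than the homological formalities (mapping cones, octahedra, long exact sequences) underlying the exactness and uniqueness arguments.
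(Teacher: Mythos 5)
Your proposal follows essentially the same route as the paper: existence is proved by induction (the paper inducts on the height of $\beta$, Theorem~\ref{thm : chain complexes Cbeta}), realizing $\Cbeta$ via the mapping cone of a morphism $u_{\bullet}^{\beta}$ between tensor products of previously constructed complexes that categorifies the exchange relation (Proposition~\ref{prop : morphism of chain complexes}), and uniqueness is obtained by the same degree-by-degree comparison argument using the factoring property (Lemma~\ref{lem :uniqueness up to isom}). Two points of execution differ from what you describe. First, the cone is not $\Cbeta$ up to a homological shift: the paper obtains $\mathrm{Cone}(u_{\bullet}^{\beta}) \simeq \Cbeta \otimes \mathbf{H}_i$ (Lemma~\ref{lem  : renorm by H_i}), so one must factor out the one-term complex $\mathbf{H}_i$ rather than merely reindex; this renormalization is what makes the degree-zero term come out as $\Ybeta$ rather than $\Ybeta \otimes Y(x_i)$, and the identification itself is carried out through the combinatorics of the invariant $\bomega$ (Lemma~\ref{lem : isom between leading objects} and Proposition~\ref{prop : Serre tilting  of leading object}), exactly the ``combinatorial heart'' you anticipate. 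Second, the propagation of strong $\HQ^{(1)}$-exactness to the cone is not done via long exact sequences or the octahedral axiom --- these are not really available for this weak-cokernel notion of exactness --- but by a direct computation with the $2\times 2$ matrix form of the cone differential, writing a test morphism as $(h \enspace k)$ and running a case analysis on whether $\tau x_i$ belongs to the tilting set $Z$ (Proposition~\ref{prop : Cbeta as mapping cone}); your null-homotopy finish to the uniqueness argument is a harmless variant of the paper's on-the-nose identity. These are corrections of mechanism rather than gaps in the overall strategy.
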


 A key ingredient in the proof of Theorem~\ref{thm : thm1 intro} consists in establishing the existence of certain morphisms having weak cokernels in $\HQ^{(1)}$ (we refer to Theorem~\ref{thm : existence of morphisms eta} below for a precise statement). Let us also mention that evidences suggest $\Cbeta$ is indecomposable in $\mathcal{K}^b(\HQ^{(1)})$ but we will not tackle this question in detail in this paper.

 The second main result of this paper will establish the connection between the triangulated monoidal category $\KQ^{(1)}$ and the cluster algebra $\AQ$. Namely, each cluster variable in $\AQ$ shall be categorified by a $\HQ^{(1)}$-exact chain complex in $\KQ^{(1)}$, in the sense that the cluster expansion of this cluster variable with respect to a well-chosen cluster will be identified with the Euler characteristic of this chain complex. 
In order to make this statement more precise, we will denote by $\chi(C_{\bullet})$ the Euler characteristic of a chain complex  $C_{\bullet} = \cdots \rightarrow C_0 \rightarrow C_1 \rightarrow \cdots $  in  $\KQ^{(1)}$, which is given by 
 $$ \chi \left( C_{\bullet} \right) := \sum_{n \in \mathbb{Z}} (-1)^n [C_n] \enspace \in K_0(\HQ^{(1)})  $$
  where   $K_0(\HQ^{(1)})$ denotes the split Grothendieck group of $\HQ^{(1)}$.  On the other hand, denote by $L[\beta]$ the real simple module in Hernandez-Leclerc's category $\Cxi^{(1)}$ that categorifies the cluster variable $x[\beta]$ in $\AQ$, and denote by $\tchi_q(L[\beta])$ its truncated $q$-character (cf. \cite{HL10}). We then have the following.

\begin{thm} \label{thm : thm2 intro}
Assume $Q$ is a Dynkin quiver of type $A_n$ with $n \geq 1$ or $D_n$ with $n \geq 4$.
Then there is an injective ring homomorphism 
   $$  \iota_Q : \enspace  K_0(\HQ^{(1)}) \longrightarrow \YZ[F_i , i \in I] $$
   such that for each positive root $\beta$, we have 
    $$  \iota_Q \left( \chi \left( C_{\bullet}[\beta] \right) \right)_{\mid F_i = -1}  =  \tchi_q \left( L[\beta] \right) .  $$
\end{thm}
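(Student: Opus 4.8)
The plan is to define $\iota_Q$ explicitly on indecomposable objects, verify that it is an injective ring homomorphism, and then identify both $\iota_Q(\chi(\Cbeta))_{\mid F_i=-1}$ and $\tchi_q(L[\beta])$ with the cluster expansion of the cluster variable $x[\beta]$ of $\AQ$ in the initial cluster attached to $Q$.

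First I would construct $\iota_Q$. By construction every indecomposable object of $\HQ^{(1)}$ is a Serre tilting of a hammock object $Y(x)$, $x\in\ZQz$, and through the fixed section of $Q^{op}$ in $\ZQ$ these objects carry the same combinatorial data as the Laurent monomials occurring in the truncated $q$-characters of the objects of $\Cxi^{(1)}$. For the distinguished dominant objects $\Ybeta$ one sets $\iota_Q([\Ybeta])$ to be the dominant monomial of $\tchi_q(L[\beta])$ multiplied by a monomial in the indeterminates $F_i$ that records the relevant boundary data of the function attached to $\Ybeta$; this is precisely where the uniform formula for the dominant monomial of $L[\beta]$ in types $A_n$ and $D_n$ (established earlier in the paper, and agreeing with Brito-Chari in type $A_n$) is used. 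Extending this prescription to all hammock objects and their Serre tiltings, and using that $\tensor$ becomes the product of the associated monomials since dominant monomials are multiplicative, one obtains a well-defined ring homomorphism $\iota_Q\colon K_0(\HQ^{(1)})\to\YZ[F_i,i\in I]$. Injectivity amounts to the images of the indecomposables being pairwise distinct monomials, which follows from the explicit combinatorics of Serre tilting and of hammock functions on $\ZQ$: the multiset $X$ of an indecomposable $(X,h)$ can be recovered from the $Y$-exponents of its image and $h$ from its $F$-exponents.

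Next I would prove the identity by induction following the iterated mapping cone construction of the complexes $\Cbeta$. For those roots $\beta$ such that $\Cbeta$ is concentrated in homological degree $0$, hence equal to $\Ybeta$ by Theorem~\ref{thm : thm1 intro}, the truncated $q$-character of $L[\beta]$ is a single monomial, namely its dominant monomial, and the identity is just the defining property of $\iota_Q$. For the inductive step, recall from the results proved earlier in this paper that each mapping cone in the construction fits into a distinguished triangle of $\KQ^{(1)}$ categorifying an exchange relation of $\AQ$; taking Euler characteristics along these triangles, where a triangle $A\to B\to C\to A[1]$ yields $[C]=[B]-[A]$ in $K_0(\HQ^{(1)})$, expresses $\chi(\Cbeta)$ through the $\chi(C_\bullet[\beta'])$ for roots $\beta'$ already handled. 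Here the signs $(-1)^n$ coming from the new homological degrees are compensated, upon setting $F_i=-1$, by the $F_i$-monomials that $\iota_Q$ attaches to those degrees, so that the signed Euler characteristic turns into the unsigned positive combination prescribed by the exchange relation. Together with the induction hypothesis this identifies $\iota_Q(\chi(\Cbeta))_{\mid F_i=-1}$ with the Laurent expansion of $x[\beta]$ in the initial cluster, evaluated at the truncated $q$-characters of the initial objects; by the Hernandez-Leclerc, Brito-Chari and Kashiwara-Kim-Oh-Park monoidal categorification of $\AQ$ by $\Cxi^{(1)}$, and for the normalization of frozen variables corresponding to $F_i=-1$, this Laurent expansion is exactly $\tchi_q(L[\beta])$.

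The main obstacle I anticipate is the compatibility of all the combinatorial data involved: that the section of $Q^{op}$ used to build $\HQ^{(1)}$ is adapted to the height function $\xi$ defining $\Cxi^{(1)}$; that the hammock functions on $\ZQz$ match the exponent patterns dictated by the uniform dominant monomial formula, which is the reason the theorem is presently limited to types $A_n$ and $D_n$; and, most of all, that the $F_i$-monomials attached by $\iota_Q$ to the homological degrees of each $\Cbeta$ carry exactly the parity needed for the $F_i=-1$ specialization to convert the signed Euler characteristic into the unsigned truncated $q$-character. Granting the uniform dominant monomial formula and the categorification theorems recalled above, these reduce to explicit combinatorial verifications on the repetition quiver $\ZQ$, but that is where the substance of the argument lies.
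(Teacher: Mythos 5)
Your overall architecture --- define $\iota_Q$ on indecomposables by monomials, then induct along the iterated mapping cone construction and match the resulting three-term recursion with exchange relations --- is the same as the paper's. But there is a genuine circularity in how you set up the induction. You propose to \emph{use} as inputs (a) the uniform formula for the dominant monomial of $L[\beta]$ and (b) the statement that each distinguished triangle categorifies an exchange relation of $\AQ$. In the paper both of these are \emph{outputs} of the proof of this theorem, not prior results: the dominant monomial formula is only deduced at the very end of the proof of Theorem~\ref{thm : Euler characteristics vs q-characters} (from the identity of Euler characteristics together with the fact that all simples in $\Cxi^{(1)}$ are cluster monomials), and the assertion that the triangles categorify exchange relations is exactly the content being established. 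Theorem~\ref{thm : thm HL/BC for C1} gives you an abstract bijection $\beta \mapsto L[\beta]$, but it does not tell you which dominant monomial $L[\beta]$ has for an arbitrary orientation, so you cannot invoke ``the cluster expansion of $x[\beta]$ evaluated at the truncated $q$-characters of the initial objects'' without first pinning down that correspondence --- which is the hard part.

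The substance missing from your sketch is the representation-theoretic verification that the truncated $q$-characters of the explicitly defined modules $L(\mbeta)$ satisfy the same three-term identity as the Euler characteristics (Proposition~\ref{prop : identity of q-characters}). In the paper this requires: tracking a specific non-dominant monomial in $\tchi_q(L(\mbeta))$ (Lemma~\ref{lem : monomial in q-character}); showing $L(\mbeta)\otimes L(Y_{x_i})$ is reducible and in fact of length exactly two, using the Kashiwara--Kim--Oh--Park invariant $\bdelta$, Fujita's $\mathrm{Ext}^1$-criterion, and the identity $\bdelta(M^{\otimes n},L)=n\bdelta(M,L)$ for the case $\beta = 2\gamma$ (Propositions~\ref{prop : tensor product is not simple} and~\ref{prop : length two}); and computing the highest $l$-weight of the second Jordan--H\"older constituent to match it with $\mathfrak{m}_{\beta-\boldsymbol{\dim}P_i^{\beta}}$ up to frozen factors. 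None of this is replaced by anything in your proposal; the appeal to ``explicit combinatorial verifications on the repetition quiver'' does not cover it, since the needed facts live in the category $\Cxi^{(1)}$, not in $\ZQ$. Your plan could in principle be salvaged by verifying directly on the cluster-algebra side that the triple $(\beta,\ \beta-\boldsymbol{\dim}I_i^{\beta},\ \beta-\boldsymbol{\dim}P_i^{\beta})$ with the coefficient monomials $K_i^{\otimes \epsilon_i}$ and $F(i,\beta)\otimes K(i,\beta)\otimes H(i,\beta)$ encodes an exchange relation of $\AQ$ in terms of $d$-vectors, but that is a nontrivial argument you would have to supply, and it still would not by itself yield the dominant monomial formula you are assuming.
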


 While proving Theorem~\ref{thm : thm2 intro}, we obtain a uniform formula for the dominant monomial of $L[\beta]$, for arbitrary Dynkin quivers of  type $A_n$ or $D_n$, which we believe was not known so far. Note that we rely on the fact that all cluster monomials in $\AQ$ are classes of simple modules in $\Cxi^{(1)}$, which follows from results by Kashiwara-Kim-Oh-Park \cite{KKOP1,KKOP2}. Our formula agrees with Brito-Chari's results \cite{BC} in type $A_n$. 
 
 It is worth outlining that although Theorems~\ref{thm : thm1 intro} and~\ref{thm : thm2 intro} deal with finite type cluster structures, the constructions proposed in this paper are designed to provide triangulated monoidal categorifications of other kinds of cluster structures, such as the ones appearing in representation theory  \cite{HL16,KKKO,KKOP1} or alternatively by considering underlying quivers $Q$ of tame or wild type. This may open perspectives in several directions, some of which we mention below.
   
    Firstly, as shown by the statement of Theorem~\ref{thm : thm2 intro}, the $q$-characters of finite dimensional representations of affine quantum groups may be identified with the Euler characteristics of certain chain complexes whose objects live in a category entirely governed by the representation theory of the quiver $Q$. Exhibiting such intimate connections between Auslander-Reiten theory and the representation theory of quantum affine algebras may shed some light on certain combinatorial observations relating numerical invariants respectively appearing in these two frameworks (such as our results of \cite{CasbiLi2} for example) and may yield new interpretations of the coefficients of certain $q$-characters. 

     Most importantly, we expect the triangulated framework proposed in this paper to be relevant for exhibiting monoidal categorifications of  other cluster structures than those appearing in Lie theory. Indeed, starting from a certain family of chain complexes concentrated in one single homological degree, the iterated mapping cone procedure used to construct the chain complexes categorifying cluster variables only requires the existence of certain morphisms having weak cokernels and whose compositions should satisfy certain vanishing conditions. This is of course non trivial but can potentially hold in categories arising from various contexts. 

      Finally, several features from our construction share close similarities with notions appearing in the theory of additive categorifications of cluster algebras; for instance, the notion of Serre tilting (cf. Definition~\ref{def : Serre tiltings} below) that plays a central role in our construction. More conceptually,  as indicated by Theorem~\ref{thm : thm1 intro}, the chain complexes categorifying cluster variables in our framework satisfy certain properties that are strikingly similar to those appearing in the contexts of $n$-abelian or $n$-angulated categories  \cite{Jasso16,GKO13}. 
    Thus, our construction may allow to produce large families of new non-trivial examples of $n$-exact sequences (potentially including the case $n=0$  recently developed in \cite{Gulisz25}) suggesting the existence of deep connections with higher homological algebra. In turn, this may provide new insights on the links between additive and monoidal categorifications of cluster algebras.

     The paper is organized as follows.  Sections~\ref{sec : background} and~\ref{sec : reminders on monoid categ} are respectively devoted to the necessary recollections on Auslander-Reiten theory and (abelian) monoidal categorifications of cluster algebras in the sense of Hernandez-Leclerc. Section~\ref{sec : the underlying categ}  contains the key definitions concerning the ambient category $\HQ$ lying at the heart of the framework proposed in this paper. In Section~\ref{sec : restriction to the section}, we focus on the   subcategory $\HQ^{(1)}$ which is appropriate to deal with finite type cluster algebras. Section~\ref{sec : exact chain complexes} is devoted to the main homological definitions  as well as the  construction of the distinguished chain complexes $\Cbeta$ in the bounded homotopy category of $\HQ^{(1)}$. Finally, in Section~\ref{sec : q characters as Euler char} we prove the second main result of this paper, relating the Euler characteristics of the chain complexes $\Cbeta$ to the (truncated) $q$-characters of simple modules in the  Hernandez-Leclerc's category $\Cxi^{(1)}$.

 \subsection*{Acknowledgements}
  I would like to express  all my gratitude to Gordana Todorov and Bernhard Keller for patiently providing me their insights and answering my questions about Auslander-Reiten theory. I also thank Masaki Kashiwara, Pavel Etingof,  Bernard Leclerc, David Hernandez,  Daniel Labardini-Fragoso, Jian-Rong Li and Keyu Wang for fruitful discussions. Finally, I warmly thank Vitor Gulisz for making very useful suggestions on early versions of this work.

 \section{Auslander-Reiten theory} \label{sec : background}

  \subsection{Auslander-Reiten theoretic background}
 \label{sec : reminders on AR}

  Let $Q$ be a (finite) acyclic quiver without loops and multiple edges. Let us denote by $I := Q_0$ its set of vertices. We denote by $\ZQ$ the repetition quiver of $Q$, whose set of vertices is  $\ZQz := \{(i,p) , p \in i + 2 \mathbb{Z}\}$ and whose set of arrows is given by 
  $$ \forall \alpha \in Q_1,  \quad (t(\alpha),p) \rightarrow (s(\alpha),p+1) \enspace \text{and} \enspace (s(\alpha),p) \rightarrow (t(\alpha),p+1) .  $$
  Here $s(\alpha)$ and $t(\alpha)$ respectively stand for the source and target of an arrow $\alpha \in Q_1$. By definition, a section of $Q$ in $\ZQ$ refers to any subset of $\ZQ$ of the form $\{(i,p_i) , i \in I\}$ such that $p_{s(\alpha)} - p_{t(\alpha)} = 1$ for every arrow $\alpha$ in $Q$. In particular, given any vertex $x \in \ZQz$, there is a unique section of $Q$ in $\ZQ$ containing $x$. We shall denote it by $Q_x$. 
   We will denote by $\tau$ the Auslander-Reiten translate on $\ZQ$ which is given by $\tau (i,p) := (i,p-2)$.
   In all what follows we will be working over an algebraically closed field $\mathbf{k}$ of characteristic zero. We denote by $\mathrm{Rep}Q$ the category of finite dimensional representations of $Q$ over $\mathbf{k}$ (equivalently, finite-dimensional modules over the path algebra $\mathbf{k}Q$ of $Q$) and by $\DbRepQ$ the bounded derived category of $\mathrm{Rep}Q$. It follows from Happel's theorem (Proposition 4.6 in \cite{Happel}) that there is an injective set-theoretic map $j_Q$ from $\ZQz$ to the set of isomorphism classes of indecomposable objects in $\DbRepQ$. This injection is moreover a bijection in the case where $Q$ is a Dynkin quiver, i.e. an orientation of the Dynkin diagram of a simple complex Lie algebra of type $A_n , n \geq 1 , D_n , n \geq 4$ or $E_n , n = 6,7,8$. We will denote 
   $$ M_{i,p} := j_Q(i,p) $$
   for each $(i,p) \in \ZQz$ and will simply identify a vertex $x$ of $\ZQ$ with its image under $j_Q$  when the explicit data of $i$ and $p$ are not needed. 
    In particular, we shall denote by $\Sigma x$ the element of $\ZQz$ given by $j_Q^{-1}(\Sigma j_Q(x))$, where $\Sigma$ denotes the standard shift in $\DbRepQ$.  Denoting by $\tau$  the Auslander-Reiten  translate functor in $\DbRepQ$, we set 
    $$ S := \Sigma \tau = \tau \Sigma . $$
    It follows from the Auslander-Reiten formula that $S$ is a Serre functor in $\DbRepQ$, i.e. 
    \begin{equation} \label{eq : isom def of Serre functor}
         \forall M,N \enspace \mathrm{Hom}(M,N) \simeq D \mathrm{Hom}(N,SM) 
      \end{equation}    
    where $D$ denotes the usual duality of $\mathbf{k}$-vector spaces. 
Recall the notion of an irreducible morphism as introduced by Auslander and Reiten in \cite{AR4}: a morphism $f : X \to Y$ in a given category is called \textit{irreducible} if $f$ is neither a split monomorphism nor a split epimorphism, and for any commutative diagram \[ \begin{tikzcd}
                                   & W \arrow[rd, "h"] &   \\
X \arrow[rr, "f"'] \arrow[ru, "g"] &                   & Y
\end{tikzcd} \] either $g$ is a split monomorphism or $h$ is a split epimorphism. (recall that a morphism $u$ is a split monomorphism when there is some $u'$ such that $u'u = 1$, and that $u$ is a split epimorphism when there is some $u'$ for which $uu' = 1$).

  \subsection{Additive functions on repetition quivers} \label{sec : additive functions}

  Given a function $f : (\ZQ)_0 \longrightarrow \mathbb{Z}$, we set 
  $$ \widetilde{f} : x \in (\ZQ)_0 \longmapsto f(x) + f(\tau x) - \sum_{ y \rightarrow x} f(y) . $$
  Following Gabriel \cite{Gabriel}, a function $f$ is said to be \textbf{additive} if $\widetilde{f}=0$. It is a known fact (cf. \cite{Gabriel}) that an additive function is uniquely determined by its values on any given section of $Q$ in $\ZQ$. In other words, given $x \in \ZQz$ and a $n$-tuple of integers $(a_1, \ldots , a_n)$, there is a unique additive function $f$ satisfying $f(x_j)=a_j$ for every $1 \leq j \leq n$, where $x_j , j \in I$ denote the vertices of the section $Q_x$. Of particular interest are the \textbf{hammock functions} studied by Gabriel \cite{Gabriel} and Brenner \cite{Brenner} (later generalized by Ringel \cite{Ringel}) defined as the unique additive function satisfying $f(y) = \dim \mathrm{Hom}_{\DbRepQ}(x,y)$ for each $y \in Q_x$. Equivalently, it is the additive function corresponding to the tuple $(a_1, \ldots , a_n)$ defined by $a_j=1$ if there is an oriented path from $j$ to $i$ in $Q$, $a_j = 0$ otherwise (here $i$ denotes the vertex of $Q$ such that $x = (i,p) ,  p \in i + 2 \mathbb{Z}$).

\begin{example} \label{ex : additive function}
Consider the Dynkin quiver $Q$ of type $A_4$ given by $1 \rightarrow 2 \rightarrow 3 \leftarrow 4$. We picture below the hammock function corresponding to the vertex  of $\ZQ$ outlined in red. 

 $$ \xymatrixrowsep{1pc} \xymatrixcolsep{1pc} \xymatrix{  
   {} &  \cdots   & 0 \ar[rd]   &    {}   &    1  \ar@{.>}[ll] \ar[rd]   & {}  & 0 \ar@{.>}[ll] \ar[rd] & {} & -1 \ar@{.>}[ll]   \\
    \cdots    &  0 \ar[ru] \ar[rd]   &    {}      &  1 \ar@{.>}[ll]  \ar[ru]  \ar[rd]  &{}  &  1 \ar@{.>}[ll] \ar[ru]  \ar[rd] & {} & -1 \ar@{.>}[ll]  \ar[ru] & \cdots \\
      -1 \ar[ru] \ar[rd]  &     {}     &    \textcolor{red}{1} \ar[ru]  \ar@{.>}[ll]   \ar[rd]   & {} & 1 \ar@{.>}[ll]  \ar[rd] \ar[ru] & {} & 0 \ar[ru] \ar@{.>}[ll] \ar[rd] & \cdots  \\
    \cdots   &   0  \ar[ru]   &  {}       &   1 \ar@{.>}[ll]  \ar[ru]   & {} & 0  \ar@{.>}[ll]   \ar[ru]& {} & 0 \ar@{.>}[ll] & \cdots 
    } $$

\end{example}

 \section{Monoidal categorifications}
 \label{sec : reminders on monoid categ}

 \subsection{Finite type cluster algebras associated with Dynkin quivers}
  \label{sec : reminders AQ}

 Given a Dynkin quiver $Q$, we will be denoting by $\AQ$ the cluster algebra with initial seed
 $$  \left( (x_1, \ldots , x_n , X_1 , \ldots, X_n)  , \overline{Q} \right)$$
 where the initial exchange quiver $\overline{Q}$ is given as follows. The set of vertices of $\overline{Q}$ is $I \times \{0,1\}$ (where $I$ denotes the set of vertices of $Q$) where $I \times \{0\}$ is the set of mutable vertices while $I \times \{1\}$ is the set of frozen vertices. The set of arrows consists in arrows $(i,1) \leftarrow (i,0) , i \in I$ (the horizontal arrows), as well as $(i,0) \leftarrow (j,0) , (i,1) \leftarrow (j,1)$ and  $(i,1) \rightarrow (j,0)$ if there is an arrow $i \rightarrow j$ in $Q$ (the latter arrows are sometimes called ordinary arrows). 

  \smallskip

  It follows from Fomin-Zelevinsky's results \cite{FZ2} that $\AQ$ is a finite type cluster algebra, i.e. $\AQ$ contains finitely many  clusters (equivalently finitely many cluster variables). Consequently, the non-frozen cluster variables in $\AQ$ are in bijection with the set of almost positive roots which is defined as $\bdelta_+ \cup \Pi_-$. We will denote by $x[\beta]$ the cluster variable corresponding to $\beta$ under this bijection, for each almost positive root $\beta$. In particular, the cluster variables $x[- \alpha_i] , i \in I$   respectively coincide with the cluster variables $x_1, \ldots , x_n$ belonging to the initial cluster previously described.

 \subsection{Representations of affine quantum groups and their $q$-characters} \label{sec : q-characters}

 Let $\mathfrak{g}$ be a simply-laced type simple complex Lie algebra and let $U_q(\widehat{\mathfrak{g}})$ denote the associated quantum affine algebra. The category $\mathcal{C}$ of finite-dimensional representations of $U_q(\widehat{\mathfrak{g}})$ is a monoidal category. It was proved by Chari-Pressley \cite{CP95a} that the irreducible representations in $\mathcal{C}$ are classified (up to isomorphism) by a monoid $\mathcal{P}^+$ whose elements are called dominant monomials and which is defined as the free monoid generated by indeterminates $Y_{i,a} , i \in I , a \in \mathbb{C}^{*}$. Here $I$ denotes an indexing set of the simple roots of $\mathfrak{g}$. We will denote by $L(\mathfrak{m})$ the simple module corresponding to $\mathfrak{m}$ under this classification. Although $\mathcal{C}$ is not symmetric, its Grothendieck ring is commutative. Frenkel-Reshetikhin \cite{FR1} introduced an injective ring homomorphism 
 $$ \chi_q : \enspace K_0(\mathcal{C}) \longrightarrow \mathcal{Y} := \mathbb{Z}[Y_{i,a}^{\pm} , i \in I , a \in \mathbb{C}^*]  $$
 called the $q$-character morphism. The set of Laurent monomials in $\mathcal{Y}$ is endowed with a partial order called the Nakajima order which is given by 
 $$ \mathfrak{m} \leq_{N} \mathfrak{m}' \enspace \Leftrightarrow \enspace \text{$\mathfrak{m}'/ \mathfrak{m}$ is a product of $A_{i,a} , i \in I, a \in \mathbb{C}^{*}$} $$
 where for each $i \in I$ and $a \in \mathbb{C}^{*}$, the Laurent monomial $A_{i,a}$ is defined as 
 \begin{equation}
     \label{eq : def of Ais}
      A_{i,a} := Y_{i,a-1}Y_{i,a+1} \prod_{j \sim i}Y_{i,a}^{-1} . 
 \end{equation}
 For every dominant monomial $\mathfrak{m} \in \mathcal{P}^+$, we have that $\mathfrak{m}$ is the unique greatest element with respect to $\leq_N$ in the set of all Laurent monomials  appearing in $\chi_q(L(\mathfrak{m}))$. In other words, the renormalized $q$-character of $M$ defined as $\mathfrak{m}^{-1} \chi_q(M)$ is a polynomial in the $A_{i,a}^{-1} , i \in I , a \in \mathbb{C}^
 *$ with constant term $1$.

  \subsection{Hernandez-Leclerc categories} \label{sec : HL categories}

 Motivated by exhibiting deep connections between the representation theory of quantum affine algebras and the combinatorics of cluster algebras introduced by Fomin and Zelevinsky \cite{FZ1}, Hernandez-Leclerc defined several monoidal full subcategories of $\mathcal{C}$, which we now briefly recall. 
Let $\IZ$ denote the set of pairs
$$ \IZ := \{ (i,p) , i \in I , p \in i + 2 \mathbb{Z} \} $$
 and let $\YZ$ denote the ring
 $$ \YZ := \mathbb{Z}[Y_{i,p}^{\pm 1} , (i,p) \in \IZ]. $$
Following \cite{HL15}, we denote by $\CZ$ the smallest monoidal  full  subcategory of $\mathcal{C}$ containing the fundamental representations $L(Y_{i,q^p})$ for $p \in i + 2 \mathbb{Z}$. Following the common use, we will be wirting $Y_{i,p}$ as a shorthand notation for $Y_{i,q^p}$ in all what follows. The simple objects of $\CZ$ are the modules $L(\mathfrak{m})$ where $\mathfrak{m}$ is a monomial in the $Y_{i,p}$. Frenkel-Reshetikhin's $q$-character morphism restricts to an injective ring homomorphism 
 $$ \chi_q : \CZ \longrightarrow \YZ . $$
 Using standard notation, we set 
 $$ A_{i,p+1} := Y_{i,p}Y_{i,p+2} \prod_{j \sim i} Y_{j,p+1}^{-1} $$
 for each $(i,p) \in \IZ$. 
  We fix an orientation $Q$ of the Dynkin diagram of $\mathfrak{g}$ and $\xi$ a height function adapted to $Q$. Then we set, for every positive integer $l \geq 1$,
 $$  \Ixi^{(l)} := \{ (i,p) \in \IZ , \enspace \xi(i)-2l \leq p \leq \xi(i)\} \qquad \text{and} \qquad  \Ixi := \{ (i,p) \in \IZ , p \leq \xi(i) \} $$
 and we denote by $\Cxi^{(l)}$ (resp. $\CxiZ$) the smallest full monoidal subcategory of $\CZ$ containing all the fundamental representations $L(Y_{i,p})$ such that $(i,p) \in \Ixi^{(l)}$ (resp. $(i,p) \in \Ixi$). 
Finally, following \cite{HL10,HL16}, we define for each $M \in \CxiZ$ the truncated $q$-character $\tchi_q(M)$ of $M$  as the element of $\YZ$ obtained from $\chi_q(M)$ by dropping all the terms involving variables $Y_{i,p}$ with $p> \xi(i)$. This gives another injective ring homomorphism 
 $$ \tchi_q : \CxiZ \longrightarrow \YxiZ :=\mathbb{Z}[Y_{i,p}^{\pm 1} ,  (i,p) \in \Ixi]  $$
 whose relevance is enlightened by the statement of Theorem~\ref{thm : thm HL/BC for C1} below.

 \subsection{Abelian monoidal categorifications}
  \label{sec : reminders on monoidal categorifications}

   We now provide brief reminders about the general notion of abelian monoidal categorification of a cluster algebra, as well as recollections of the main results involving Hernandez-Leclerc categories. Note that although the usual terminology is simply monoidal categorification, we will refer to this framework as abelian monoidal categorification, in contrast with the construction presented in this paper which deals with triangulated monoidal categories. 
   
   The notion of abelian monoidal categorification of a cluster algebra was introduced by Hernandez-Leclerc \cite{HL10}. Given a cluster algebra $\mathcal{A}$, an abelian monoidal categorification of $\mathcal{A}$ consists in an abelian monoidal category $ \mathcal{C}$ together with a ring isomorphism $\mathcal{A} \simeq K_0(\mathcal{C})$ sending cluster monomials in $\mathcal{A}$ to isomorphism classes of simple objects in $\mathcal{C}$ (these objects are then necessarily real, i.e. their tensor square is simple). The simplicity of tensor products of objects categorifying cluster variables or monomials is therefore deeply related to the cluster structure of $\mathcal{A}$. For instance, the simple objects categorifying frozen cluster variables (if there are any) in $\mathcal{A}$ satisfy the property that their tensor product with any object categorifying a cluster monomial is simple.

   The case of the category $\Cxi^{(1)}$ will be of particular interest to us in this paper, in view of the following result, which was established  by Hernandez-Leclerc \cite{HL10} in the case where $Q$ is bipartite of type $A_n$ or $D_4$, and generalized by Brito-Chari \cite{BC} to all orientations in type $A_n$. It follows from the results by Kashiwara-Kim-Oh-Park in the general case.

  \begin{thm}[Hernandez-Leclerc \cite{HL10}, Brito-Chari \cite{BC}, Kashiwara-Kim-Oh-Park \cite{KKOP2}] 
  \label{thm : thm HL/BC for C1}

   Let $Q$ be any Dynkin quiver and $\xi$ be a height function adapted to $Q$. Then the category $\Cxi^{(1)}$ provides a monoidal categorification of the finite type cluster algebra $\AQ$. Moreover, for each almost positive root $\beta$, the truncated $q$-character of the corresponding real simple module $L[\beta]$ in $\Cxi^{(1)}$ coincides (up to a torus isomorphism) with the cluster expansion of $x[\beta]$ with respect to the initial cluster of $\AQ$.
    \end{thm}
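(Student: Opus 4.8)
The plan is to follow the strategy introduced by Hernandez--Leclerc \cite{HL10} and completed by Brito--Chari \cite{BC} in type $A_n$ and by Kashiwara--Kim--Oh--Park \cite{KKOPCompos20, KKOP2} in general, which reduces the statement to the construction of one explicit \emph{monoidal seed} in $\Cxi^{(1)}$ together with the verification that it admits a mutation in every mutable direction. First I would fix an initial monoidal seed $\mathcal{S}_0$ whose $2n$ objects are suitable real simple modules of $\Cxi^{(1)}$ (fundamental modules together with certain Kirillov--Reshetikhin modules), chosen so that $n$ of them are to become the initial cluster variables $x_1,\dots,x_n$ and the other $n$ the frozen variables $X_1,\dots,X_n$. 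I would then compute the exchange quiver attached to $\mathcal{S}_0$ --- read off from the poles of the normalized $R$-matrices between these modules, equivalently from the numerical invariants $\mathfrak{d}(-,-)$ --- and check that it coincides with the quiver $\overline{Q}$ of Section~\ref{sec : reminders AQ}. Since $K_0(\Cxi^{(1)})$ is the polynomial ring generated by the classes of the fundamental modules $L(Y_{i,p})$ with $(i,p)\in\Ixi^{(1)}$, this produces a ring isomorphism $K_0(\Cxi^{(1)})\xrightarrow{\ \sim\ }\AQ$ matching $\mathcal{S}_0$ with the initial seed of $\AQ$.

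The core step is to show that $\mathcal{S}_0$ can be mutated in each mutable direction, again yielding a monoidal seed. For each $k\in I$ one must produce a real simple module $L_k'\in\Cxi^{(1)}$ fitting into a short exact sequence, schematically
$$ 0 \longrightarrow \bigotimes_{j\,:\,(j,0)\to(k,0)} M_j \longrightarrow M_k \otimes L_k' \longrightarrow \bigotimes_{j\,:\,(k,0)\to(j,0)} M_j' \longrightarrow 0 , $$
where $M_k$ is the $k$-th object of $\mathcal{S}_0$ and the $M_j,M_j'$ are the objects of $\mathcal{S}_0$ singled out by the arrows of $\overline{Q}$. In finite type these are the relevant instances of the $T$-system (and $QQ$-system) relations among fundamental and Kirillov--Reshetikhin modules. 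I would establish them via the standard criterion from the theory of $R$-matrices: if $M$ and $N$ are real simple modules such that $M\otimes N$ has a unique simple submodule and a unique simple quotient with the prescribed composition factors, then the desired short exact sequence exists; the necessary control on the tensor product follows from computing the invariants $\Lambda$ and $\mathfrak{d}$ for the modules involved. Iterating this and invoking the general monoidal-categorification criterion of \cite{KKOPCompos20} (each simple reached by a sequence of admissible mutations is again real, and the attached quiver transforms by Fomin--Zelevinsky mutation) shows that $\Cxi^{(1)}$ is a monoidal categorification of $\AQ$. Since $\AQ$ is of finite type by \cite{FZ2}, its exchange graph is connected with finitely many vertices, so by induction along this graph each cluster variable $x[\beta]$, $\beta\in\bdelta_+\cup\Pi_-$, is the class of a real simple module $L[\beta]$, and all cluster monomials are classes of real simple modules.

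For the ``moreover'' statement, I would observe that $\tchi_q$ is a ring homomorphism on $K_0(\CxiZ)$, restricting to $K_0(\Cxi^{(1)})\simeq\AQ$. Under this identification, together with the torus isomorphism transporting cluster expansions into $\YxiZ$, the family $\{\tchi_q(L[\beta])\}$ and the family of cluster expansions of the $x[\beta]$ with respect to the initial cluster are two collections of elements of $\YxiZ$ that (i) agree for the almost positive roots $\beta$ indexing the objects of $\mathcal{S}_0$, by a direct computation of their truncated $q$-characters (this is where the torus isomorphism enters), and (ii) satisfy the same exchange relations, since applying $\tchi_q$ to each exchange short exact sequence from the previous step reproduces exactly the corresponding exchange relation in $\AQ$. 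As the exchange graph of $\AQ$ is connected and the Laurent expansion of a cluster variable with respect to a fixed cluster is unique, an induction along the exchange graph shows that $\tchi_q(L[\beta])$ equals the cluster expansion of $x[\beta]$ for every almost positive root $\beta$, up to the fixed torus isomorphism.

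The main obstacle is the construction of the exchange short exact sequences in arbitrary Dynkin type. In type $A_n$ it is handled by Brito--Chari's explicit combinatorics of snake modules and their $T$-systems \cite{BC}; the general case requires the full machinery of normalized $R$-matrices and their numerical invariants developed by Kashiwara--Kim--Oh--Park, including the delicate verification that the relevant tensor products have length exactly two and are ``unmixed''. By contrast, the identification of the exchange quiver with $\overline{Q}$, the ring-isomorphism bookkeeping, the choice of torus isomorphism, and the two inductions along the finite connected exchange graph are comparatively formal.
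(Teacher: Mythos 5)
This theorem is not proved in the paper at all: it is stated as a recalled result, with the proof deferred entirely to the cited references (Hernandez--Leclerc for bipartite $A_n$ and $D_4$, Brito--Chari for arbitrary orientations in type $A_n$, and Kashiwara--Kim--Oh--Park for the general case). There is therefore no internal proof to compare your proposal against, and the right standard is whether your outline faithfully reflects how those references establish the result.

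On that standard your sketch is accurate: the strategy of fixing an initial monoidal seed of fundamental and Kirillov--Reshetikhin modules, reading off the exchange quiver $\overline{Q}$ from the invariants $\mathfrak{d}(-,-)$, producing one admissible mutation in each direction via $T$-system type short exact sequences, and then invoking the Kashiwara--Kim--Oh--Park criterion that an admissible monoidal seed propagates under arbitrary sequences of mutations, is exactly the architecture of \cite{HL10}, \cite{BC} and \cite{KKOPCompos20,KKOP2}. The argument for the ``moreover'' part --- that $\tchi_q$ is a ring homomorphism, agrees with the cluster expansion on the initial seed up to the torus isomorphism, and transports each exchange short exact sequence to the corresponding exchange relation, so that induction along the connected exchange graph of the finite type cluster algebra $\AQ$ closes the loop --- is likewise the standard one. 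Be aware, however, that your proposal is a roadmap rather than a proof: the two genuinely hard inputs, namely the existence and length-two property of the exchange tensor products in arbitrary Dynkin type (controlled by the invariants $\Lambda$ and $\mathfrak{d}$) and the theorem that one round of admissible mutations suffices to guarantee all iterated mutations, are treated as black boxes. For a result the paper itself imports from the literature this is entirely appropriate, but if you intended a self-contained argument those two points are where all the work lives; note also that the present paper's own contribution (Theorem~\ref{thm : Euler characteristics vs q-characters}) uses Theorem~\ref{thm : thm HL/BC for C1} together with the reality statements of Remark~\ref{rk : frozens in abelian monoidal categorifications} as inputs, so no circularity is introduced by citing rather than reproving it.
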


 \begin{rk}
  It was later proved by Hernandez-Leclerc that the Grothendieck ring of $\CZ^{(l)}$ (resp. $\CxiZ$) carries a cluster algebra structure with an initial cluster given by the simple modules (which are Kirillov-Reshetikhin modules) given by $L(Y_{i, \xi(i)} \cdots Y_{i, \xi(i)-2k})$ with $0\leq k \leq l$ (resp. $k \geq 0$). Kashiwara-Kim-Oh-Park \cite{KKOP1,KKOP2} then proved that the categories $\CxiZ$ and $\Cxi^{(l)}$ actually provide  monoidal categorifications of these cluster structures. 
 \end{rk}

 \begin{rk} \label{rk : frozens in abelian monoidal categorifications}
 In the case of the category $\Cxi^{(1)}$, the initial cluster of $\AQ$ is categorified by the following real simple modules: with the notations of Section~\ref{sec : reminders AQ}, we have that for every $i \in I$ the (unfrozen) initial variable $x_i$ is categorified by the fundamental representation $L(Y_{i, \xi(i)})$ while the frozen variable $X_i$ is categorified by the Kirillov-Reshetikhin modules $L(Y_{i, \xi(i)-2}Y_{i, \xi(i)})$. Moreover in that case every simple module in $\Cxi^{(1)}$ categorifies a cluster monomial (and hence is real). In particular we have that $ L(Y_{i, \xi(i)-2}Y_{i, \xi(i)}) \otimes M $ is simple for any simple module $M$ in $\Cxi^{(1)}$. 
   \end{rk}
  
   \subsection{Invariants associated to pairs of simple modules}

  It will be crucial to us later in this paper to show the existence of certain short exact sequences in $\CZ$ (or more specifically in $\Cxi^{(1)}$). A powerful tool for such purpose has been introduced by Kashiwara-Kim-Oh-Park \cite{KKOP1,KKOP2}, which can be viewed as the pendant in the setup of quantum affine algebras of an analogous tool developed by Kang-Kashiwara-Kim-Oh \cite{KKKO} for quiver Hecke algebras. Namely, for each pair $M,N$ of simple modules in $\CZ$, Kashiwara-Kim-Oh-Park defined a positive integer $\bdelta(M,N)$ out of the data of the poles of the $R$-matrices $R_{M,N} : M \otimes N \rightarrow N \otimes M$ and $R_{N,M} : N \otimes M \rightarrow M \otimes N$ in $\CZ$. Among the numerous interesting properties of these invariants, we will be using the following. 

   \begin{thm}[Kashiwara-Kim-Oh-Park] \label{thm : KKOP}
    \begin{enumerate}[(i)]
    \item (cf. {{\cite[Corollary 3.17]{KKOPCompos20}}}  Let $M$ and $N$  be simple modules in $\CZ$ such that at least one of them is real, and assume $\bdelta(M,N)=0$. Then the tensor products $M \otimes N$ and $N \otimes M$ are simple and are isomorphic to each other. 
    \item (cf. {{\cite[Proposition 4.7 and Corollary 4.12]{KKOPCompos20}}}   Let $M$ and $N$ be simple modules in $\CZ$ such that at least one of them is real, and assume $\bdelta(M,N)=1$. Then $M \otimes N$ is of length $2$. Moreover each of its simple components strongly commute with both $M$ and $N$. 
    \item (cf. {{\cite[Proposition 4.2]{KKOPCompos20}}}) Let $M,N,L$ be real simple modules in $\CZ$. Then we have  $\bdelta( \mathrm{hd}(M \otimes N),L) \leq  \bdelta(M,L) + \bdelta(N,L)$.
    \item (cf. {{\cite[Lemma 3.10 and Corollary 3.19]{KKOPCompos20}}}) Let $n \geq 1$ and let $M$ and $L$ be real simple modules in $\CZ$. Then $\bdelta(M^{\otimes n},L) = n \bdelta(M,L)$.
    \item (cf.  {{\cite[Lemma 3.4]{KKOPJEMS24}}}) Let $M$ be a simple module in $\CZ$ and let $L$ be a fundamental representation in $\CZ$. Then $\bdelta(\mathrm{hd}(M \otimes L),L) = \max(0 , \bdelta(M,L)-1)$. 
    \end{enumerate}
    
   \end{thm}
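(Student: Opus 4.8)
The statement collects several results of Kashiwara-Kim-Oh-Park, so the plan is essentially to invoke the cited references; I indicate below the main ideas underlying each part. The common mechanism is the theory of normalized $R$-matrices $R_{M,N}^{\mathrm{ren}} \colon M \otimes N \to N \otimes M$ in $\CZ$ together with the associated rational function (the ``denominator'' $d_{M,N}(z)$) whose order of vanishing controls the integer $\bdelta(M,N)$.

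For part (i), the plan is to recall that when $\bdelta(M,N)=0$ the relevant $R$-matrices $R_{M,N}^{\mathrm{ren}}$ and $R_{N,M}^{\mathrm{ren}}$ are well-defined isomorphisms, mutually inverse up to a nonzero scalar, whence $M\otimes N \simeq N\otimes M$, and the reality hypothesis then guarantees that $M\otimes N$ (hence also $N\otimes M$) is simple; this is exactly \cite[Corollary 3.17]{KKOPCompos20}. For part (ii), one shows that when $\bdelta(M,N)=1$ the image of $R_{M,N}^{\mathrm{ren}}$ realizes the (simple) head of $M\otimes N$ and its kernel the (simple) socle, and a count of composition factors governed by the single simple zero of $d_{M,N}(z)$ gives length exactly $2$; the strong commutation of the two factors with $M$ and $N$ is then extracted from further $\bdelta$-computations, which is \cite[Proposition 4.7 and Corollary 4.12]{KKOPCompos20}.

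Parts (iii), (iv) and (v) are formal consequences of the behaviour of $\bdelta$ under tensoring. The subadditivity $\bdelta(\mathrm{hd}(M\otimes N),L)\le \bdelta(M,L)+\bdelta(N,L)$ in (iii) is \cite[Proposition 4.2]{KKOPCompos20}; the exact additivity $\bdelta(M^{\otimes n},L)=n\,\bdelta(M,L)$ in (iv) follows by combining (iii) with the matching lower bound coming from the order of the pole of the relevant $R$-matrix, as in \cite[Lemma 3.10 and Corollary 3.19]{KKOPCompos20}; and the sharpened identity $\bdelta(\mathrm{hd}(M\otimes L),L)=\max(0,\bdelta(M,L)-1)$ for $L$ fundamental is \cite[Lemma 3.4]{KKOPJEMS24}, whose proof exploits that a fundamental module $L$ is ``small'', in the sense that $d_{L,L}(z)$ has a single simple zero, so that forming $\mathrm{hd}(-\otimes L)$ drops $\bdelta(-,L)$ by at most one.

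The only genuine difficulty is that a fully self-contained argument would require importing the entire apparatus of $R$-matrices, renormalization, and the combinatorics of the denominators $d_{M,N}$ developed in \cite{KKOPCompos20,KKOPJEMS24}; since this machinery is orthogonal to the constructions of the present paper, I would simply use these statements as black boxes.
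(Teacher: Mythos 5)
The paper gives no proof of this theorem: it is stated purely as a recollection of results imported from \cite{KKOPCompos20} and \cite{KKOPJEMS24}, with the same citations you give. Your proposal does exactly the same thing (treating the statements as black boxes, with a plausible sketch of the underlying $R$-matrix machinery), so it matches the paper's approach.
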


    We will also need the following result due to R. Fujita.

     \begin{thm}[Fujita  \cite{Fujita}] \label{thm : Fujita}
     Let $(i,p)$ and $(j,s)$ in $\IZ$. Then the tensor product  $L(Y_{i,p}) \otimes L(Y_{j,s})$  is simple if and only if $\mathrm{Ext}^1(M_{i,p},M_{j,s}) = \mathrm{Ext}^1(M_{j,s},M_{i,p}) = 0$ in $\DbRepQ$. 
         
     \end{thm}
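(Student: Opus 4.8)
The plan is to route the equivalence through two well-understood combinatorial models: on one side, $\mathrm{Hom}$- and $\mathrm{Ext}$-spaces between indecomposable objects of $\DbRepQ$, read off the repetition quiver $\ZQ$; on the other, the dominant monomials occurring in products of $q$-characters of fundamental modules. The first step is to bring the $\mathrm{Ext}$-condition into the first model, the second is to bring the simplicity condition into the second model, and the last step is to match the two resulting "bands'' of bad pairs.

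\textbf{Rewriting the $\mathrm{Ext}$ side.} Since $\mathrm{Rep}Q$ is hereditary, $\mathrm{Ext}^1_{\DbRepQ}(X,Y)=\mathrm{Hom}_{\DbRepQ}(X,\Sigma Y)$, and applying the Serre duality \eqref{eq : isom def of Serre functor} together with $S=\Sigma\tau$ gives $\mathrm{Hom}_{\DbRepQ}(X,\Sigma Y)\cong D\,\mathrm{Hom}_{\DbRepQ}(Y,\tau X)$. Hence the right-hand side of the statement is equivalent to
$$\mathrm{Hom}_{\DbRepQ}(M_{j,s},M_{i,p-2})=0\quad\text{and}\quad\mathrm{Hom}_{\DbRepQ}(M_{i,p},M_{j,s-2})=0,$$
i.e. to the simultaneous vanishing of two hammock-function values on $\ZQ$ (Section~\ref{sec : additive functions}). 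In particular this condition depends only on $i$, $j$ and $p-s$, and the set of pairs where it fails is an explicit finite band around the diagonal of $\ZQ$, computable from the tuple associated with the hammock function.

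\textbf{Rewriting the simplicity side.} Using that $\chi_q$ is injective and that every simple module of $\CZ$ has a dominant highest monomial occurring with multiplicity one in its $q$-character, one gets the standard criterion that $L(Y_{i,p})\otimes L(Y_{j,s})$ is simple if and only if $Y_{i,p}Y_{j,s}$ is the only dominant monomial of the product $\chi_q\bigl(L(Y_{i,p})\bigr)\chi_q\bigl(L(Y_{j,s})\bigr)$. Then, for types $A_n$ and $D_n$, the Frenkel--Mukhin algorithm and the known path-type description of $\chi_q$ of a fundamental module let one list exactly the pairs $(i,p),(j,s)$ for which a second dominant monomial appears: such a monomial is obtained from $Y_{i,p}Y_{j,s}$ by multiplying by an $A_{k,r}^{-1}$ contributed by one of the two factors, and analyzing when the result is still dominant again isolates a finite band of values of $p-s$ depending on $i,j$. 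Alternatively one can run this step through the invariant $\bdelta$: for fundamental modules in these types $\bdelta\le 1$ (the relevant $R$-matrix denominators have simple zeros), $\bdelta=0$ is equivalent to simplicity of the tensor product by Theorem~\ref{thm : KKOP}(i) and conversely by Theorem~\ref{thm : KKOP}(ii) (since $\bdelta\ge 1$ forces length $\ge 2$ for real simples), so one is reduced to computing $\bdelta\bigl(L(Y_{i,p}),L(Y_{j,s})\bigr)$ from the same data.

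\textbf{The matching, and the main obstacle.} It then remains to verify that the "bad band'' produced in the simplicity analysis is exactly the $\mathrm{Ext}$-support band of the first step. I expect this type-by-type combinatorial identification to be the principal difficulty: in type $A_n$ it is essentially Brito--Chari-style bookkeeping and is already known, but in type $D_n$ the trivalent vertex and its two possible local orientations make both the explicit fundamental $q$-characters and the corresponding hammocks considerably more delicate, and outside $A_n,D_n$ the path description of $q$-characters is unavailable. For a proof uniform over all Dynkin types I would instead follow Fujita's geometric approach: realize $L(Y_{i,p})$, $L(Y_{j,s})$ and their tensor product as simple perverse sheaves on Nakajima's graded quiver varieties, express the order of the pole of the normalized $R$-matrix as an intersection-cohomology stalk at the combined graded dimension vector, and invoke the transversality statement identifying the non-vanishing of that stalk with the non-vanishing of exactly the spaces $\mathrm{Ext}^1_{\DbRepQ}(M_{i,p},M_{j,s})$ and $\mathrm{Ext}^1_{\DbRepQ}(M_{j,s},M_{i,p})$ in the statement — at the cost of importing the quiver-variety machinery wholesale.
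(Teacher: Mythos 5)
This statement is not proved in the paper at all: it is imported verbatim from Fujita's work (the bracketed attribution and the citation \cite{Fujita} are the entire ``proof''), so there is no internal argument to compare yours against. The only question is therefore whether your proposal is a self-contained proof, and it is not. The decisive step --- showing that the set of pairs $\bigl((i,p),(j,s)\bigr)$ for which the tensor product fails to be simple coincides exactly with the set where one of the two $\mathrm{Ext}^1$-spaces is nonzero --- is precisely the content of the theorem, and you explicitly defer it (``It then remains to verify that the bad band \dots is exactly the Ext-support band''). Everything before that point is correct but routine: the Serre-duality rewriting $\mathrm{Ext}^1(M_{i,p},M_{j,s})\simeq D\,\mathrm{Hom}(M_{j,s},M_{i,p-2})$ is fine, and ``unique dominant monomial in the product $\Rightarrow$ simple'' is standard. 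But the converse direction of your ``standard criterion'' is not free: if the tensor product is simple, its $q$-character is still the product of the two factors, and ruling out a second dominant monomial in that product requires knowing that the resulting simple module is special, which is an additional (nontrivial, type-dependent) input rather than a formal consequence of injectivity of $\chi_q$. Similarly, the claim that $\bdelta\le 1$ for all pairs of fundamental modules because ``the relevant $R$-matrix denominators have simple zeros'' is exactly the kind of fact that Fujita's paper establishes; assuming it is circular.

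Your closing paragraph is the honest assessment: a uniform proof goes through graded quiver varieties and the identification of the pole order of the normalized $R$-matrix with an intersection-cohomology stalk, which is Fujita's argument. Invoking that is equivalent to citing the theorem, which is what the paper does. So the proposal should be read as a plausible plan for a type-$A_n$ (and, with substantially more bookkeeping, type-$D_n$) verification, not as a proof; as written it neither covers type $E$ nor closes the combinatorial matching in the types it does address.
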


\section{The underlying category}
\label{sec : the underlying categ}


  \subsection{Quasi-additive functions}

  Extending the  terminology from  Section~\ref{sec : additive functions}, we consider the following  class of integer-valued functions on $\ZQz$.

   \begin{deftn} \label{def : quasi-additive functions}
     A function $f : \ZQz \longrightarrow \mathbb{Z}$ will be called quasi-additive if $\widetilde{f}$ has finite support, i.e. $\widetilde{f}(x)=0$ on all but finitely many vertices of $\ZQ$.
   \end{deftn}
For each $x \in \ZQz$, we consider the unique  quasi-additive function $h_x$ determined by the requirements 
    \begin{enumerate}
        \item $h_x(y)= \dim \mathrm{Hom}_{\DbRepQ}(x,y)$  for every vertex $y$ on the section $Q_x$ of $\ZQ$,
        \item $ \widetilde{h_x} = \delta_x$,
    \end{enumerate}
   where $\delta_x$ denotes the function given by $\delta_x(y) := 1$ if $x=y$ and $0$ otherwise. For a given $x \in \ZQz$, the function $h_x$ should be viewed as a semi-infinite version of the hammock functions from Section~\ref{sec : additive functions}, as illustrated below. 

\begin{example} \label{ex : quasi-additive function}
Consider the Dynkin quiver $Q$ of type $A_4$ given by $1 \rightarrow 2 \rightarrow 3 \leftarrow 4$ as in Example~\ref{ex : additive function}. We picture below the function $h_x$ where $x$ is the vertex of $\ZQ$ outlined in red. 

 $$ \xymatrixrowsep{1pc} \xymatrixcolsep{1pc} \xymatrix{  
   {} &  \cdots   & 0 \ar[rd]   &    {}   &    1  \ar@{.>}[ll] \ar[rd]   & {}  & 0 \ar@{.>}[ll] \ar[rd] & {} & -1 \ar@{.>}[ll]   \\
    \cdots    &  0 \ar[ru] \ar[rd]   &    {}      &  1 \ar@{.>}[ll]  \ar[ru]  \ar[rd]  &{}  &  1 \ar@{.>}[ll] \ar[ru]  \ar[rd] & {} & -1 \ar@{.>}[ll]  \ar[ru] & \cdots \\
      0 \ar[ru] \ar[rd]  &     {}     &    \textcolor{red}{1} \ar[ru]  \ar@{.>}[ll]   \ar[rd]   & {} & 1 \ar@{.>}[ll]  \ar[rd] \ar[ru] & {} & 0 \ar[ru] \ar@{.>}[ll] \ar[rd] & \cdots  \\
    \cdots   &   0  \ar[ru]   &  {}       &   1 \ar@{.>}[ll]  \ar[ru]   & {} & 0  \ar@{.>}[ll]   \ar[ru]& {} & 0 \ar@{.>}[ll] & \cdots 
    } $$

\end{example}
   
 We now prove a couple of important  properties of these functions.

  \begin{lem} \label{lem : linear independece of hammock functions}
      The functions $h_x , x \in \ZQz$ are linearly independent.
  \end{lem}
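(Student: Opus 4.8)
The plan is to exploit the defining property $\widetilde{h_x} = \delta_x$, which already pins down each $h_x$ up to an additive function, together with the normalization on the section $Q_x$. Suppose we have a finite linear combination $\sum_{x \in F} c_x h_x = 0$ for some finite subset $F \subseteq \ZQz$ and scalars $c_x \in \mathbb{Z}$ (or $\mathbb{Q}$). Applying the linear operator $f \mapsto \widetilde{f}$ to both sides, and using that $\widetilde{\;\cdot\;}$ is $\mathbb{Z}$-linear (it is, being a signed sum of evaluation maps), we get
\[
\widetilde{\left( \sum_{x \in F} c_x h_x \right)} = \sum_{x \in F} c_x \widetilde{h_x} = \sum_{x \in F} c_x \delta_x = 0 .
\]
But $\{\delta_x\}_{x \in \ZQz}$ is visibly a linearly independent family of functions on $\ZQz$ (evaluate at $y \in F$ to read off $c_y = 0$). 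Hence $c_x = 0$ for all $x \in F$, which is exactly what we want.

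In fact this argument is essentially immediate, so the bulk of the write-up will just be checking that $\widetilde{\;\cdot\;}$ is genuinely linear and that it is legitimate to apply it termwise to a finite sum — both of which are routine since $\widetilde{f}(y) = f(y) + f(\tau y) - \sum_{z \to y} f(z)$ is a finite $\mathbb{Z}$-linear expression in the values of $f$ for each fixed $y$ (the quiver $\ZQ$ is locally finite, so the sum over $z \to y$ has finitely many terms). I would state it in this order: (i) recall $\widetilde{h_x} = \delta_x$; (ii) observe $f \mapsto \widetilde f$ is linear and commutes with finite sums; (iii) apply it to a vanishing combination and conclude via linear independence of the $\delta_x$.

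I do not expect a genuine obstacle here — the one thing to be slightly careful about is whether one wants linear independence over $\mathbb{Z}$ or over $\mathbb{Q}$ (over $\mathbb{Q}$ the statement is the stronger one and the proof is identical; over $\mathbb{Z}$ it is a formal consequence), and whether ``linear combination'' is implicitly understood to be finite, which it must be since $\ZQz$ is infinite and these are functions, not elements of a topological vector space. Both points are handled by simply saying ``finite $\mathbb{Q}$-linear combination'' in the statement of the argument. So the proof is short: reduce to linear independence of $\{\delta_x\}$ by applying the tilde operator.
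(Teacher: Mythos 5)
Your proof is correct, and it takes a genuinely different route from the paper's. The paper argues by evaluation: it picks a vertex $x_1$ lying ``to the left'' of all the other $x_j$ in the combination, observes that $h_{x_1}(x_1)=1$ while $h_{x_k}(x_1)=0$ for $k>1$ (using the normalization of $h_x$ on the section $Q_x$ and the vanishing of the other hammocks at that vertex), deduces $a_1=0$, and inducts. You instead apply the linear operator $f \mapsto \widetilde{f}$ and use only the defining property $\widetilde{h_x}=\delta_x$, reducing the claim to the evident linear independence of the $\delta_x$. Your argument is shorter, avoids any discussion of which vertex is ``leftmost'' and of the values $h_{x_k}(x_1)$ (which in the paper's proof require a small justification about supports of hammock functions), and needs no induction; the linearity of $\widetilde{\;\cdot\;}$ is immediate from local finiteness of $\ZQ$, as you note. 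The one thing your argument does not use at all is condition (1) in the definition of $h_x$ (the normalization on the section), which the paper's proof relies on --- so your proof in fact shows the stronger statement that any family of quasi-additive functions whose images under $\widetilde{\;\cdot\;}$ are distinct delta functions is linearly independent. Both proofs are valid.
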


 \begin{proof}
     Assume $r \geq 1$ and  $a_1, \ldots , a_r \in \mathbb{Z}$  and $x_1, \ldots , x_r \in \ZQz$ are such that $a_1 h_{x_1} + \cdots + a_r h_{x_r} = 0$. We can choose $i$ such that $x_i$ is to the left of the other $x_j$. Wlog $i=1$. Hence we have $h_{x_k}(x_1) = 0$ for all $k>1$ and $h_{x_1}(x_1) = 1$ by definition of the functions $h_x$. This implies $a_1=0$. The lemma thus follows by a straightforward induction. 
  \end{proof}

  \begin{lem} \label{lem : mutation for functions}
   For every $x \in \ZQz$ we have that 
   $$ h_x + h_{\tau^{-1}x} = \delta_x + \sum_{x \rightarrow y} h_y .  $$
  \end{lem}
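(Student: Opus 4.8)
The plan is to reduce the identity to a statement about additive functions, where the required equality is essentially the defining relation $\widetilde{f}=0$. First I would recall that by Definition~\ref{def : quasi-additive functions} and the construction just before Example~\ref{ex : quasi-additive function}, the function $h_x$ is the unique quasi-additive function with $\widetilde{h_x}=\delta_x$ whose restriction to the section $Q_x$ is $y\mapsto \dim\mathrm{Hom}_{\DbRepQ}(x,y)$. Consider the function
$$ g := h_x + h_{\tau^{-1}x} - \delta_x - \sum_{x \rightarrow y} h_y . $$
The strategy is to show $g=0$ by showing $\widetilde{g}=0$ (so $g$ is additive) and that $g$ vanishes on enough vertices — e.g. on some section, or on a suitable semi-infinite region together with a boundedness/support argument — to force $g\equiv 0$.

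For the computation of $\widetilde{g}$: since $\widetilde{(\cdot)}$ is linear, $\widetilde{g} = \widetilde{h_x} + \widetilde{h_{\tau^{-1}x}} - \widetilde{\delta_x} - \sum_{x\to y}\widetilde{h_y} = \delta_x + \delta_{\tau^{-1}x} - \widetilde{\delta_x} - \sum_{x\to y}\delta_y$. Now I would compute $\widetilde{\delta_x}$ directly from its definition $\widetilde{f}(z) = f(z) + f(\tau z) - \sum_{w\to z} f(w)$: this gives $\widetilde{\delta_x}(z)$ supported on $z=x$ (contributing $+1$), $z=\tau^{-1}x$ (from the $f(\tau z)$ term, contributing $+1$), and on all $z$ with $x\to z$ in $\ZQ$ (each contributing $-1$). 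In other words $\widetilde{\delta_x} = \delta_x + \delta_{\tau^{-1}x} - \sum_{x\to y}\delta_y$, which is exactly the remaining terms, so $\widetilde g = 0$. (This is the "mesh relation" and is the combinatorial heart of the statement; I expect this bookkeeping with the arrows of $\ZQ$ to be the only place one has to be careful, especially matching the arrows $x\to y$ in $\ZQ$ against the Auslander-Reiten mesh around $x$.)

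Having shown $g$ is additive, it remains to show $g=0$. Here I would use the known fact, recalled in Section~\ref{sec : additive functions}, that an additive function is determined by its values on any section of $Q$ in $\ZQ$; it therefore suffices to check $g$ vanishes on the section $Q_x$ through $x$. On $Q_x$ one has $h_x(y)=\dim\mathrm{Hom}_{\DbRepQ}(x,y)$ and for each vertex $y$ with $x\to y$ in $\ZQ$, the comparison of the sections $Q_x$ and $Q_y$ (they differ by one elementary move) together with the standard Auslander-Reiten translation formulas lets one evaluate $h_{\tau^{-1}x}$ and the $h_y$'s on $Q_x$; the resulting identity is precisely the dimension count coming from the almost split (mesh) triangle $x \to \bigoplus_{x\to y} y \to \tau^{-1}x \to \Sigma x$ in $\DbRepQ$, i.e. $\dim\mathrm{Hom}(x,-) + \dim\mathrm{Hom}(\tau^{-1}x,-) = \sum_{x\to y}\dim\mathrm{Hom}(y,-)$ on objects where the extra $\delta_x$ term does not intervene, with the $+\delta_x$ accounting for the endomorphism of $x$. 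Assembling these two facts — $\widetilde g=0$ and $g|_{Q_x}=0$ — gives $g=0$, which is the claim.

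The main obstacle I anticipate is purely notational: correctly identifying, for a vertex $x=(i,p)\in\ZQz$, the set of targets $y$ of arrows $x\to y$ in $\ZQ$ and matching this with the middle term of the Auslander-Reiten mesh, so that the two independent checks (the $\widetilde{(\cdot)}$ computation and the section computation) are genuinely consistent; once the mesh relation in $\DbRepQ$ is written down, both checks are mechanical. An alternative, cleaner route that avoids the section computation entirely would be to observe directly that $h_x + h_{\tau^{-1}x} - \sum_{x\to y}h_y$ is quasi-additive with $\widetilde{(\cdot)} = \delta_x$ (by the computation above), hence equals $\delta_x$ plus an additive function $a$; one then pins down $a=0$ either by the section argument or by a support/decay argument using that all of $h_x, h_{\tau^{-1}x}, h_y$ vanish far enough to the left, forcing the additive piece (which, if nonzero, cannot have such one-sided vanishing) to be zero. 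I would present whichever of these is shortest given the conventions already fixed in the paper.
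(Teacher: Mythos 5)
Your proposal is correct and follows essentially the same route as the paper: compute $\widetilde{\delta_x}=\delta_x+\delta_{\tau^{-1}x}-\sum_{x\to y}\delta_y$ to conclude that $h_x+h_{\tau^{-1}x}-\sum_{x\to y}h_y-\delta_x$ is additive, then check it vanishes on the section $Q_x$ via the Auslander--Reiten mesh around $x$. The only cosmetic difference is that the paper carries out the section evaluation by hand (using $h_x(z)=1$, $h_{\tau^{-1}x}(z)=0$ via the AR formula, and $\sum_{x\to y}h_y(z)=1-\delta_{x}(z)$) rather than quoting the dimension count from the almost split triangle, but the content is identical.
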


   \begin{proof}
     Let us fix $x \in \ZQz$. It is then straightforward to check that 
     $$ \widetilde{\delta_x} = \delta_x + \delta_{\tau^{-1}x} - \sum_{x \rightarrow y} \delta_y  . $$
    It follows from this that the function 
    \begin{equation} \label{eq : aux function}
         h_x + h_{\tau^{-1}x} - \sum_{x \rightarrow y} h_y - \delta_x
     \end{equation}
     is additive and hence is entirely determined by its values on any section of $\ZQ$. Let us consider the section starting at $x$. For any $z$ on that section, we have that $\dim_{\mathbf{k}} \mathrm{Hom}(x,z) =1$  so that $h_x(z) =1$ and $\mathrm{Ext}^1(z,x) = 0 $ which implies that $h_{\tau^{-1}x}(z) = 0$ using the Auslander-Reiten formula. On the other hand, $z$ belongs to exactly one of the sections starting at each $y$ such that there is an arrow $x \rightarrow y$ in $\ZQ$, unless $z$ is $x$ itself. In other words we have that $ \sum_{x \rightarrow y}h_y (z) = 1 $ if $z \neq x$, and $0$ if $z=x$. Putting all of this together we see that the function~\eqref{eq : aux function} vanishes on all the vertices of the section starting at $x$. The lemma is proved. 
   \end{proof}

    \subsection{Two partial orders on quasi-additive functions}

     The set of all quasi-additive functions is endowed with an obvious partial order, namely 
     $$ f \leq g \enspace \Leftrightarrow \enspace \forall x \in \ZQz,  f(x) \leq g(x) .  $$
     However in what follows we will need a more refined partial order. We introduce the following covering relation:
     $$ f \lessdot g \quad  \Leftrightarrow \quad  \exists x \in \ZQz, \enspace g-f = \delta_x \enspace \text{and} \enspace \widetilde{g}(x)>0 .   $$
     We then define a partial order $\preccurlyeq$ on the set of all quasi-additive functions on $\ZQz$ as the transitive closure of $\lessdot$. In particular we have that $f \preccurlyeq g \Rightarrow f \leq g$.

 \subsection{The category $\MQ$}
  \label{sec : category MQ}
 
 We denote by $\IQ$ the set of all isomorphism classes of indecomposable objects in $\DbRepQ$. Recall that a multiset of elements of $\IQ$ is a collection $\{x_k , k \in K\}$ where $K$ is a set and $x_k$ is an indecomposable object in $\DQ$ for each $k \in K$ (equivalently, it can be viewed as a  function from $K$ to $\IQ$). Note that such a function shall usually not be injective; in other words, any given indecomposable object  in $\DbRepQ$ may appear several times (up to isomorphism) in the same collection (but only finitely many times !). In this work, we will exclusively be considering at most countable  multisets, i.e.  functions from a finite or  countable set $K$ to $\IQ$. 
  Given two multisets $X$ and $Y$ we can define their intersection $X \cap Y$ as the largest multiset contained in both $X$ and $Y$. 
 
 We define a $\mathbf{k}$-category $\tMQ$ as follows. The class of objects of $\tMQ$ consists of all pairs $(X,h)$ where $X$ is an at most countable multiset of elements of $\IQ$ and $h$ is a quasi-additive function on $\ZQz$. We then define the notion of Serre tilting of such pairs as follows. Recall that $S$ denotes the Serre functor $S = \tau \Sigma = \Sigma \tau$ in the category $\DbRepQ$. 
 
   \begin{deftn} \label{def : Serre tiltings}
 Given any indecomposable object $(X,h)$ in $\tMQ$ and any \textit{finite} subset $Y = \{ y_1 , \ldots  , y_r\} \subseteq X$, we define the \textbf{Serre tilting} of $(X,h)$ at $Y$ as the object of $\tMQ$ given by  
 $$ \mu_Y(X,h) := \left(  (X \setminus Y) \sqcup SY \enspace , \enspace  h - \sum_{y \in Y} \delta_y \right) $$
 where $SY$ stands for $SY := \{S y_1, \ldots , S y_r\}$.  For simplicity we will write $\mu_y(X)$ for $\mu_{\{y \}}(X)$.
   \end{deftn}
  
   We now define the class of morphisms of the category $\tMQ$. 
 Given two at most countable sets $K$ and $L$, we denote by $\mathrm{Bij}(K,L)$ the set of all bijections from $K$ to $L$. In particular $\mathrm{Bij}(K,L) = \emptyset$ if $K$ and $L$ are finite of different cardinalities. Moreover for any function $\sigma : K \rightarrow L$, we set 
  $$  \mathrm{Supp}_{X,Y}(\sigma) := \{k \in K \mid y_{\sigma(k)} \neq x_k \} .  $$
  Given two multisets $X = \{x_k, k \in  K\}$ and $Y = \{y_l , l \in L\}$ and two  quasi-additive functions $f$ and $g$ on $\ZQz$, we then define the morphism space from $(X,f)$ to $(Y,g)$ in $\widetilde{\MQ}$ as follows:
   \begin{equation} \label{eq : def of morphisms in CQ}
  \begin{split} 
      \mathrm{Hom}_{\tMQ} \left( (X,f),(Y,g) \right)  := 
       \bigoplus_{  \substack{ \sigma \in \mathrm{Bij}(K,L) \\  \sharp \mathrm{Supp}_{X,Y}(\sigma) < \infty }} & \bigotimes_{k \in \mathrm{Supp}_{X,Y}(\sigma)} \mathrm{Hom}_{\DQ}(x_k , y_{\sigma(k)})  \\
       \qquad \qquad \qquad \qquad \qquad \qquad  \qquad \qquad  & \text{if $(Y,g) = \mu_Z(X,f), \enspace Z \subseteq Y$}
      \\
      {} \\
     \mathrm{Hom}_{\tMQ} \left( (X,f),(Y,g) \right)  &:= \enspace 0 \qquad  \text{otherwise.}
  \end{split}
  \end{equation}
   Note that each summand on the right hand side is a tensor product of finitely many $\mathbf{k}$-vector spaces (as $\DbRepQ$ is a $\mathbf{k}$-category) so that $\tMQ$ is a $\mathbf{k}$-category.

    \begin{rk}
        In most of what follows we will be considering only the case where $X$ and $Y$ are finite with same cardinality so that
   $$ \mathrm{Hom}_{\tMQ} \left( (X,f),(Y,g) \right) := \bigoplus_{ \sigma \in \mathrm{Bij}(K,L)} \bigotimes_{k \in K} \mathrm{Hom}_{\DQ}(x_k , y_{\sigma(k)}) $$
   if $(Y,g)$ is a Serre tilting of $(X,f)$.
    \end{rk}
   
   Using standard procedures,  we can then make it into an additive (and hence $\mathbf{k}$-linear) category $\MQ$. We claim that the objects of $\tMQ$ will be indecomposable in $\MQ$. Let us begin with an elementary lemma.

  \begin{lem} \label{lem : both ways morphisms}
      Let $(X,f)$ and $(Y,g)$ be two objects in $\tMQ$. Then we have that $\mathrm{Hom}_{\MQ}((X,f),(Y,g))$ and $\mathrm{Hom}_{\MQ}((Y,g),(X,f))$ are both non zero if and only if $(X,f) \simeq (Y,g)$ in $\tMQ$. 
  \end{lem}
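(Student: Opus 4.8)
The plan is to deduce the lemma directly from the combinatorial shape of the morphism spaces \eqref{eq : def of morphisms in CQ} together with the elementary monotonicity of Serre tiltings. The ``if'' direction is immediate: if $\phi\colon (X,f)\xrightarrow{\ \sim\ }(Y,g)$ is an isomorphism in $\tMQ$, then $\phi$ and $\phi^{-1}$ witness that both Hom spaces are nonzero. For the converse, I would first note that $\MQ$ is obtained from $\tMQ$ by the standard additive-completion procedure, so the inclusion $\tMQ\hookrightarrow\MQ$ is fully faithful; hence it suffices to work with $\mathrm{Hom}_{\tMQ}$ throughout, and to show that both $\mathrm{Hom}_{\tMQ}\big((X,f),(Y,g)\big)$ and $\mathrm{Hom}_{\tMQ}\big((Y,g),(X,f)\big)$ being nonzero forces $(X,f)\simeq(Y,g)$.

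The key observation is that, by the very definition \eqref{eq : def of morphisms in CQ}, $\mathrm{Hom}_{\tMQ}\big((X,f),(Y,g)\big)$ can be nonzero only if $(Y,g)$ is a Serre tilting of $(X,f)$ in the sense of Definition~\ref{def : Serre tiltings}; that is, there is a finite subset $W\subseteq X$ with $Y=(X\setminus W)\sqcup SW$ and $g=f-\sum_{w\in W}\delta_w$. In particular $g\leq f$ pointwise, and since each $\delta_w$ is a nonnegative function with $\delta_w(w)=1$, equality $g=f$ is possible only when $W=\emptyset$. Running the same argument with the roles of $(X,f)$ and $(Y,g)$ exchanged, nonvanishing of $\mathrm{Hom}_{\tMQ}\big((Y,g),(X,f)\big)$ gives $f\leq g$. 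Therefore $f=g$, so both Serre tiltings are trivial ($W=\emptyset$ in each direction), whence $Y=X$ as multisets; consequently $(X,f)$ and $(Y,g)$ are equal up to a relabeling of the indexing set, and such a relabeling defines an isomorphism in $\tMQ$ via the empty-support summand $\bigotimes_{k\in\emptyset}(\cdots)=\mathbf{k}$ of \eqref{eq : def of morphisms in CQ}.

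I do not expect a genuine obstacle here --- this is the ``elementary lemma'' announced before the statement, whose role is simply to feed into the proof of indecomposability of the objects of $\tMQ$. The only place calling for a little care is the bookkeeping with multisets: one must argue the step ``$f=g$ forces the tilting to be trivial'' at the level of multisets, noting that a nonempty $W$ (even one with repeated entries) makes $\sum_{w\in W}\delta_w$ strictly positive somewhere, and one must observe that ``$Y=X$ as multisets'' does genuinely yield an isomorphism in $\tMQ$ rather than merely a bijection of underlying data.
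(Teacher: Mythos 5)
Your proposal is correct, but for the main implication it takes a genuinely different route from the paper's. Both arguments begin identically: non-vanishing of each Hom space forces each object to be a Serre tilting of the other, and comparing the quasi-additive functions gives $g\leq f\leq g$, hence $f=g$. At that point the paper abandons the functions and proves $X\simeq Y$ as multisets by a separate induction: it picks an element $x_1\in X$ extremal for the Hom-ordering on indecomposables, chases a nonzero composite $x_1\to y_k\to x_l$ through the two Hom spaces, uses directedness of $\DbRepQ$ to conclude $x_l\simeq x_1\simeq y_k$, and peels off one element at a time. You instead squeeze everything out of the function component: since a Serre tilting at a nonempty $W$ subtracts $\sum_{w\in W}\delta_w$, which is a nonzero nonnegative function, the equality $f=g$ already forces both tilting sets to be empty, whence $Y=X$ on the nose. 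Your argument is shorter and handles infinite multisets with no extra bookkeeping, but it silently uses that $\delta_w\neq 0$ for every $w\in\IQ$, i.e.\ that every indecomposable of $\DbRepQ$ is a vertex of $\ZQ$; this holds when $Q$ is Dynkin (Happel's theorem), which is the only case in which the lemma is subsequently applied, but fails for general acyclic $Q$, where indecomposables outside the image of $j_Q$ would force $\delta_w$ to be read as $0$ and monotonicity alone would no longer kill $W$. The paper's multiset argument avoids that particular issue but relies instead on directedness of the Hom structure among indecomposables (no nonzero cycles through non-isomorphic objects), which is likewise a Dynkin-type hypothesis, and its induction is only written for finite multisets. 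In the setting where the lemma is actually used the two proofs are equally valid, and yours is the more economical.
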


 \begin{proof}
     We will write the proof in the case where $X$ and $Y$ are finite. Assume 
     $$ \mathrm{Hom}_{\MQ}((X,f),(Y,g)) \neq 0  \quad \text{and}  \quad  \mathrm{Hom}_{\MQ}((Y,g),(X,f)) \neq 0 . $$
     Firstly, this clearly implies $f=g$. It moreover implies that  $X$ and $Y$ have same cardinalities so we may write $X := \{x_1, \ldots , x_m\}$ and $Y := \{y_1, \ldots , y_m\}$, with $m \in \mathbb{Z}_{\geq 1}$. We can choose $j \in \{1, \ldots , m\}$ such that for all  $i$ is such that $x_i$ is not isomorphic to $x_j$ and all indecomposable objects $x$ in $\DbRepQ$ such that  $\mathrm{Hom}(x,x_i) \neq 0$ we have $\mathrm{Hom}_{\DbRepQ}(x_j,x) =0$. Without loss of generality we assume $j=1$. As $\mathrm{Hom}_{\MQ}(X,Y) \neq 0$ there is $k \in \{1, \ldots, m\}$ such that $\mathrm{Hom}_{\DbRepQ}(x_1,y_k) \neq 0$. As $\mathrm{Hom}_{\MQ}(Y,X) \neq 0$ there is $l \in \{1, \ldots , m \}$ such that $\mathrm{Hom}_{\DbRepQ}(y_k ,x_l) \neq 0$. Thus we must have that $x_l \simeq x_1$ in $\DbRepQ$ and hence $y_k \simeq x_1$ as well. Moreover we get the existence of nonzero morphisms respectively from $X \setminus \{x_1\}$ to $Y \setminus \{y_k\}$ and from $Y \setminus \{y_k\}$ to $X \setminus \{x_l\}$. But the latter is isomorphic to $X \setminus \{x_1\}$ as $x_1 \simeq x_l$ in $\DbRepQ$ and hence we get that both $\mathrm{Hom}_{\MQ}(X',Y')$ and $\mathrm{Hom}_{\MQ}(Y',X')$ are non zero, where $X' := X \setminus \{x_1\}$ and $Y' := Y \setminus \{y_k\}$. We can then repeat the same argument inductively to conclude that $X \simeq Y$ as multisets and thus $(X,f) \simeq (Y,g)$ in $\tMQ$. 
 \end{proof}

  The claim follows as explained below:

   \begin{cor} \label{cor : indecomposable objects in MQ}
       Let $(X,f)$ be any object in $\tMQ$. Then $(X,f)$ is indecomposable in $\MQ$. 
   \end{cor}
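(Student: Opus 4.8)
The statement to prove is Corollary~\ref{cor : indecomposable objects in MQ}: every object $(X,f)$ of $\tMQ$ is indecomposable in the additive completion $\MQ$. The plan is to deduce this directly from Lemma~\ref{lem : both ways morphisms} together with the standard fact that an object of an additive category is indecomposable as soon as its endomorphism ring has no nontrivial idempotents, which in turn holds when that ring is local. So the first step is to pin down the endomorphism ring $\mathrm{End}_{\tMQ}((X,f)) = \mathrm{Hom}_{\tMQ}((X,f),(X,f))$. By the defining formula~\eqref{eq : def of morphisms in CQ}, since $(X,f)$ is trivially its own Serre tilting (at $Z = \emptyset$), this is $\bigoplus_{\sigma} \bigotimes_{k \in \mathrm{Supp}(\sigma)} \mathrm{Hom}_{\DQ}(x_k, x_{\sigma(k)})$ over those $\sigma \in \mathrm{Bij}(K,K)$ with finite support. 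The key point is that the ``identity'' bijection $\sigma = \mathrm{id}$ contributes the empty tensor product, i.e.\ the scalar $\mathbf{k}$.

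Next I would isolate the two-sided ideal $\mathfrak{m}$ spanned by all homogeneous components indexed by $\sigma \neq \mathrm{id}$, and argue that $\mathrm{End}_{\tMQ}((X,f)) = \mathbf{k} \oplus \mathfrak{m}$ as $\mathbf{k}$-modules, with $\mathfrak{m}$ a nilpotent (or at least nil) ideal, so the endomorphism ring is local with residue field $\mathbf{k}$. The nilpotency comes from Lemma~\ref{lem : both ways morphisms}: if $\sigma \neq \mathrm{id}$ and the $\sigma$-component is nonzero, then some nontrivial cycle of $\sigma$ forces a chain of nonzero morphisms among distinct indecomposables of $\DbRepQ$, and by the lemma (applied to the sub-multisets swept out by that cycle) one cannot also have nonzero morphisms in the reverse direction; combined with the fact that $\DbRepQ$ is a Krull--Schmidt $\mathbf{k}$-category in which $\mathrm{Hom}$-spaces between non-isomorphic indecomposables admit no ``loops'' of nonzero composites, any sufficiently long composite of such components must vanish. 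Concretely, because $\mathrm{Supp}(\sigma)$ is finite, there are only finitely many indecomposable classes involved, so the subquiver of $\DbRepQ$ they span is finite and acyclic by Lemma~\ref{lem : both ways morphisms}, whence every sufficiently long composite of non-identity components is zero; thus $\mathfrak{m}$ is nilpotent.

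Once $\mathrm{End}_{\tMQ}((X,f))$ is shown to be local, indecomposability in $\MQ$ is immediate: any idempotent $e$ in a local ring is $0$ or $1$, so $(X,f)$ has no nontrivial direct summand in the additive completion. I would write this out first in the case $X$ finite (where the bookkeeping is cleanest and where Lemma~\ref{lem : both ways morphisms} was proved), then remark that the countable case is identical since the finite-support condition on $\sigma$ localizes the entire argument to a finite sub-multiset. The main obstacle I anticipate is the nilpotency argument for $\mathfrak{m}$: one must be a little careful that a nonzero $\sigma$-component really does produce an honest oriented cycle of nonzero $\mathrm{Hom}$'s in $\DbRepQ$ (rather than, say, a nonzero tensor that happens to vanish upon composition for degree reasons), and that Lemma~\ref{lem : both ways morphisms} can be applied cycle-by-cycle to rule such cycles out. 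This is exactly the ``straightforward'' reduction the excerpt gestures at with ``The claim follows as explained below,'' so I expect the author's actual proof to be a two- or three-line invocation of Lemma~\ref{lem : both ways morphisms} plus the local-endomorphism-ring criterion, with the nilpotency packaged implicitly.
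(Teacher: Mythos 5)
There is a genuine gap: your key claim that $\mathrm{End}_{\tMQ}((X,f))=\mathbf{k}\oplus\mathfrak{m}$ with $\mathfrak{m}$ a nil(potent) ideal, hence that the endomorphism ring is local, is false as soon as the multiset $X$ contains a repeated indecomposable. Indeed, in formula~\eqref{eq : def of morphisms in CQ} the support of $\sigma$ is $\{k\mid y_{\sigma(k)}\neq x_k\}$, not $\{k\mid\sigma(k)\neq k\}$: any $\sigma\neq\mathrm{id}$ that merely permutes identical copies of a repeated element has \emph{empty} support and therefore contributes the empty tensor product, i.e.\ a copy of $\mathbf{k}$ spanned by a unit of the endomorphism ring. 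For $X=\{y,y\}$ one gets $\mathrm{End}_{\tMQ}((X,f))\supseteq\mathbf{k}[\mathfrak{S}_2]\cong\mathbf{k}\times\mathbf{k}$ in characteristic zero, which has a nontrivial idempotent; the transposition lies in your $\mathfrak{m}$ and squares to the identity, so $\mathfrak{m}$ is neither nil nor even a two-sided ideal. Such repetitions are not a pathology here: hammock multisets in type $D$ have entries of multiplicity $2$, and the dominant objects $Y(\tau x_i)^{\otimes c_i}\otimes\cdots$ used later repeat whole hammocks. Your nilpotency/acyclicity argument via Lemma~\ref{lem : both ways morphisms} correctly handles the components with \emph{nonempty} support, but it says nothing about these empty-support permutation components, which are exactly the obstruction to locality. (A secondary caveat: for infinite $K$ a non-identity $\sigma$ with nonempty finite support need not decompose into finite cycles inside its support, so the ``finite acyclic subquiver'' step also needs more care; but the empty-support issue is the fatal one.)

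The statement survives only because $\MQ$ is the \emph{additive hull} of $\tMQ$, not its idempotent completion: the nontrivial idempotents above do not split in $\MQ$, so a local endomorphism ring is sufficient but not necessary for indecomposability there — and here it genuinely fails. The paper's proof is built around this point: it assumes $(X,f)\simeq(Y,g)\oplus(Z,h)$ with both summands nonzero objects of $\tMQ$ (which is what a summand in the additive hull must be), observes that the split injections and projections give nonzero morphisms both ways between $(X,f)$ and each summand, invokes Lemma~\ref{lem : both ways morphisms} to conclude $(Y,g)\simeq(X,f)\simeq(Z,h)$, hence $(X,f)\simeq(X,f)\oplus(X,f)$, and derives a contradiction by comparing the (finite) dimensions of the endomorphism spaces. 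To repair your argument you would have to abandon the locality claim and instead follow this route, or else prove indecomposability of $(X,f)$ only in the Karoubi envelope under the additional hypothesis that $X$ is multiplicity-free — which is strictly weaker than Corollary~\ref{cor : indecomposable objects in MQ}.
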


    \begin{proof}
        Assume $(X,f)$ can be decomposed as $(X,f) \simeq (Y,g) \oplus (Z,h)$ in $\MQ$ with $(Y,g)$ and $(Z,h)$ non-zero objects in $\tMQ$. Then in particular we have that $\mathrm{Hom}_{\tMQ}((X,f),(Y,g))$ and $\mathrm{Hom}_{\tMQ}((Y,g),(X,f))$ are both non zero (and similarly for $(Z,h)$) which implies by Lemma~\ref{lem : both ways morphisms}  that $(Y,g) \simeq (X,f)$ in $\tMQ$ (and hence in $\MQ$), and for the same reason $(X,f) \simeq (Z,h)$ in $\MQ$. Thus we obtain that $(X,f) \simeq (X,f) \oplus (X,f)$ in $\MQ$.  In the case where $Q$ is a Dynkin quiver, it follows from the  construction of $\tMQ$ that  $\mathrm{Hom}_{\MQ}((X,f),(X,f))$ is a finite-dimensional $\mathbb{C}$-vector space. The fact that $\mathbb{C}$ has characteristic zero gives the contradiction, which shows that $(X,f)$ is indecomposable in $\MQ$. 
    \end{proof}

   Given two indecomposable objects $(X,f)$ and $(Y,g)$ in $\MQ$ with $X = \{x_k , k \in K\}$ and $Y = \{y_l , l \in L\}$, we set 
\begin{equation} \label{eq : monoidal structure on CQ}
 (X,f) \otimes (Y,g) :=  \left( X \sqcup Y , f + g \right) .  
\end{equation}   
   where $X \sqcup Y$ is the multiset defined by
   $$ X \sqcup Y := \{z_p , p \in K \sqcup L \} , \enspace z_p := \begin{cases}
       x_p & \text{if $p \in K$,} \\
       y_p & \text{if $p \in L$. }
   \end{cases} 
   $$
Moreover, given two non-trivial morphisms $\varphi_1 : (X_1,f_1) \rightarrow (Y_1,g_1) $ and $\varphi_2 : (X_2,f_2) \rightarrow (Y_2,g_2)$ in $\MQ$, we  get a canonical non-trivial morphism $\varphi_1 \otimes \varphi_2 : (X_1 , f_1) \otimes (X_2,f_2) \rightarrow (Y_1,g_1) \otimes (Y_2,g_2)$ in $\MQ$ as $(Y_1 \sqcup Y_2, g_1 + g_2) = \mu_{Z_1 \sqcup Z_2} (X_1 \sqcup X_2 , f_1 + f_2)$ where $Z_1$ and $Z_2$ are such that $(Y_i,g_i) = \mu_{Z_i}(X_i,f_i)$ for each $i \in \{1,2\}$. It is straightforward to check that this construction is natural and hence~\eqref{eq : monoidal structure on CQ} endows $\MQ$ with a symmetric monoidal structure.

\subsection{The category $\HQ$} \label{sec : category CQ and KQ}

 We now introduce the category $\HQ$ which will serve as underlying category for all the constructions presented in the next sections. We begin by defining certain distinguished objects (that we will refer to as  hammock objects) in $\MQ$ that will be playing a crucial role in what follows.  For every  $x$ in $\IQ$, we define the $\textbf{hammock multiset}$ associated to $x$  as the multiset $H(x)$ of elements of $\IQ$ given by
  \begin{equation} \label{eq : def of hammock multisets}
       H(x) := \bigsqcup_{y \in \IQ} \bigsqcup_{m=1}^{\dim_{\mathbf{k}} \mathrm{Hom}_{\DQ}(x,y)} \{y  \} .     
  \end{equation}
  
  In other words, $H(x)$ is the multiset containing $m_x(y)$ copies of each $y \in \IQ$, where $m_x(y) := \dim_{\mathbf{k}} \mathrm{Hom}_{\DQ} (x,y)$ for each $y \in \IQ$. Moreover, for every $x \in \IQ$, we define the \textbf{hammock object} associated to $x$ as the indecomposable object $Y(x)$ of $\MQ$ given by
    \begin{equation} \label{eq : def of hammock objects}
         Y(x) := \left(  H(x) , h_x \right) .   
    \end{equation}
   We will also need the following (indecomposable) objects in $\MQ$:
\begin{equation} \label{eq : def of F objects}
     \forall x \in \IQ, \enspace F(x) :=  \left( \{ Sx , \Sigma x \} , 0 \right) . 
\end{equation}
We then consider  the following collections of indecomposable objects in $\MQ$:
  \begin{equation} \label{eq : B(x)}
      \forall x \in \IQ, \enspace \mathcal{B}(x) := \{ (X,h) \mid h \preccurlyeq h_x \enspace \text{and} \enspace  \mathrm{Hom}_{\MQ} \left( Y(x) , (X,h) \right) \neq 0 \} . 
  \end{equation}
We then define the category $\HQ$ as the smallest additive monoidal full subcategory of $\MQ$ containing $F(x)$ as well as all the objects belonging to $\mathcal{B}(x)$, for all $x \in \IQ$. 
The pairs of the form $(X_1,f_1) \otimes \cdots \otimes (X_r,f_r)$ where for each $k$, $(X_k,f_k)$ is of the form $Y(x_k)$  for some $x_k \in \ZQz$ will be called \textit{dominant objects}. The set of all dominant objects will be denoted by $\ObQ^+$.

 \begin{rk} \label{rk on dominant objects}
    It follows from Lemma~\ref{lem : linear independece of hammock functions} that a dominant object is entirely determined (up to isomorphism) by its quasi-additive function.  
 \end{rk}

  The next statement shows how the Serre tilting of a hammock object is related to other hammock objects. 

 \begin{lem} \label{lem : mutation for objects in CQ}
     For each $x \in \IQ$ we have that 
     $$ \mu_xY(x) \otimes Y(\tau^{-1}x) \simeq  F(x) \otimes \bigotimes_{x \rightarrow y} Y(y) . $$
 \end{lem}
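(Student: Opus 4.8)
The plan is to verify the isomorphism $\mu_xY(x) \otimes Y(\tau^{-1}x) \simeq F(x) \otimes \bigotimes_{x \rightarrow y} Y(y)$ in $\MQ$ by checking separately that the underlying multisets agree and that the associated quasi-additive functions agree, since by Remark~\ref{rk on dominant objects} (applied to the right-hand side, which is manifestly dominant) together with Lemma~\ref{lem : linear independece of hammock functions}, it suffices to match these two pieces of data.

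First I would compute the quasi-additive function. By Definition~\ref{def : Serre tiltings}, the function attached to $\mu_xY(x)$ is $h_x - \delta_x$, and tensoring adds functions, so the left-hand side carries the function $h_x - \delta_x + h_{\tau^{-1}x}$. The function attached to $F(x)$ is $0$, so the right-hand side carries $\sum_{x \rightarrow y} h_y$. The equality of these two functions is exactly the content of Lemma~\ref{lem : mutation for functions}, which states $h_x + h_{\tau^{-1}x} = \delta_x + \sum_{x \rightarrow y} h_y$, i.e. $h_x - \delta_x + h_{\tau^{-1}x} = \sum_{x \rightarrow y} h_y$. So the functions match immediately.

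Next I would match the multisets. The multiset underlying $\mu_xY(x)$ is $(H(x) \setminus \{x\}) \sqcup \{Sx\}$, since $x$ itself appears in $H(x)$ with multiplicity $\dim \mathrm{Hom}_{\DbRepQ}(x,x) = 1$ (as $x$ is indecomposable and $\mathbf{k}$ is algebraically closed, so the endomorphism algebra is local with residue field $\mathbf{k}$, and here it is in fact just $\mathbf{k}$). So the left-hand multiset is $(H(x) \setminus \{x\}) \sqcup \{Sx\} \sqcup H(\tau^{-1}x)$. The right-hand multiset is $\{Sx, \Sigma x\} \sqcup \bigsqcup_{x \rightarrow y} H(y)$. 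Thus, after cancelling the common copy of $Sx$ from both sides, the claim reduces to the multiset identity
$$ (H(x) \setminus \{x\}) \sqcup H(\tau^{-1}x) \enspace = \enspace \{\Sigma x\} \sqcup \bigsqcup_{x \rightarrow y} H(y) . $$
In terms of multiplicity functions $m_z(w) = \dim_{\mathbf{k}}\mathrm{Hom}_{\DbRepQ}(z,w)$, and writing $\Sigma x = S(\tau^{-1}x)$, this says that for every $w \in \IQ$,
$$ m_x(w) - \delta_x(w) + m_{\tau^{-1}x}(w) = \delta_{\Sigma x}(w) + \sum_{x \rightarrow y} m_y(w) . $$
This is the analogue of Lemma~\ref{lem : mutation for functions} "one dimension at a time": for each fixed $w$, both sides are the value at $w$ of an additive-type function, and the identity is precisely the long exact sequence obtained by applying $\mathrm{Hom}_{\DbRepQ}(-,w)$ to the Auslander-Reiten triangle $\tau^{-1}x \to \bigoplus_{x \rightarrow y} y \to x \to \Sigma(\tau^{-1}x)$ in $\DbRepQ$ (Happel), whose existence is part of the Auslander-Reiten theory recalled in Section~\ref{sec : reminders on AR}; the correction terms $\delta_x(w)$ and $\delta_{\Sigma x}(w)$ come exactly from the connecting maps / the fact that the maps in the AR triangle are not surjective or injective on $\mathrm{Hom}$-spaces precisely at $w = x$ and $w = \Sigma x$. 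Alternatively, one can deduce this multiset identity directly from Lemma~\ref{lem : mutation for functions} by noting that both sides define additive functions on the full repetition quiver $\ZQ$ agreeing on a section (via the interpretation of $m_z(-)$ as the hammock function of $z$ restricted appropriately), and then invoking the uniqueness of additive functions determined by a section.

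The main obstacle I anticipate is the bookkeeping around the single-vertex corrections $\delta_x$ and $\delta_{\Sigma x}$: one must be careful that the copy of $x$ removed by the Serre tilting on the left precisely accounts for the $-\delta_x$, and that the extra object $\Sigma x$ appearing inside $F(x)$ on the right precisely accounts for the $+\delta_{\Sigma x}$, i.e. that these are not "off by one" relative to the AR triangle. Everything else — the function equality, the tensor-additivity of functions, and the appeal to Remark~\ref{rk on dominant objects} to upgrade the matching data to an actual isomorphism in $\MQ$ — is routine given the lemmas already established. A clean way to organize the whole argument is: (1) both sides are dominant objects (the left because $\mu_x$ of a hammock object tensored with a hammock object still lies in the relevant subcategory, or one simply works in $\MQ$ where this is automatic and uses that dominant objects on the right determine things); (2) their quasi-additive functions coincide by Lemma~\ref{lem : mutation for functions}; (3) hence by Remark~\ref{rk on dominant objects} they are isomorphic, provided one first checks the left-hand side is also dominant — which it is not literally, so the safest route is the direct multiset computation above rather than the black-box appeal, and I would present it that way.
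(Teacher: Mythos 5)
Your proof is correct and takes essentially the same route as the paper: reduce the claim to the equality of quasi-additive functions (which is exactly Lemma~\ref{lem : mutation for functions}) together with the multiset identity $H(x) \sqcup H(\tau^{-1}x) = \{x , \Sigma x\} \sqcup \bigsqcup_{x \rightarrow y} H(y)$, the latter being read off from the Auslander--Reiten triangle by applying $\mathrm{Hom}_{\DbRepQ}(-,w)$. One small slip: the relevant AR triangle is $x \rightarrow \bigoplus_{x \rightarrow y} y \rightarrow \tau^{-1}x \rightarrow \Sigma x$ (as in the paper), not $\tau^{-1}x \rightarrow \bigoplus_{x \rightarrow y} y \rightarrow x \rightarrow \Sigma(\tau^{-1}x)$ as you wrote; the dimension identity you extract, with its corrections $\delta_x$ and $\delta_{\Sigma x}$, is unaffected.
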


  \begin{proof}
  In view of Lemma~\ref{lem : mutation for functions}, it suffices to show the following identity of multisets 
  $$ H(x) \sqcup H(\tau^{-1}x) = \{x , \Sigma x \} \sqcup \bigsqcup_{x \rightarrow y} H(y) .  $$
  This follows from the fact that $x \rightarrow \bigoplus_{x \rightarrow y} y \rightarrow \tau^{-1}x \rightarrow \Sigma x$ is an Auslander-Reiten triangle in $\DbRepQ$ and that $\DbRepQ$ has weak kernels. 
  \end{proof}

The following observation is immediate from the definition of $\MQ$:

 \begin{lem}
     For every $x \in \IQ$ we have that $\mu_xY(x)$ belongs to $\HQ$. 
 \end{lem}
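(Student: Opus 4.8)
The goal is to show $\mu_xY(x)$ belongs to $\HQ$, where $\HQ$ is the smallest additive monoidal full subcategory of $\MQ$ containing the objects $F(y)$ and all objects in $\mathcal{B}(y)$ for $y \in \IQ$. The plan is to exhibit $\mu_xY(x)$ as an object that is either directly in one of the generating collections $\mathcal{B}(y)$, or obtained from the generators by the operations (direct sums, tensor products, passing to direct summands) that define the monoidal subcategory generated by them. Given the preceding Lemma~\ref{lem : mutation for objects in CQ}, the natural route is to use the identity $\mu_xY(x) \otimes Y(\tau^{-1}x) \simeq F(x) \otimes \bigotimes_{x \rightarrow y} Y(y)$: the right-hand side manifestly lies in $\HQ$ since each tensor factor is a generator (here $F(x)$ is a generator and each $Y(y)$ lies in $\mathcal{B}(y)$, being $Y(y)=(H(y),h_y)$ with $h_y \preccurlyeq h_y$ and a nonzero identity-type morphism). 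So the essential point is that $\mu_xY(x)$ is a tensor factor of something in $\HQ$, and one needs $\HQ$ to be closed under passing to such tensor factors — but that is not automatic for an arbitrary monoidal subcategory.

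First I would argue more directly that $\mu_xY(x)$ itself lies in $\mathcal{B}(x)$. Indeed $\mu_xY(x) = (H(x)\setminus\{x\} \sqcup \{Sx\}, h_x - \delta_x)$. We need to check the two defining conditions in~\eqref{eq : B(x)}. For the function condition, $h_x - \delta_x \lessdot h_x$ holds precisely when $\widetilde{h_x}(x) > 0$; since $\widetilde{h_x} = \delta_x$ by the defining property (2) of the hammock function, we have $\widetilde{h_x}(x) = 1 > 0$, so $h_x - \delta_x \preccurlyeq h_x$. For the Hom condition, we must exhibit a nonzero morphism $Y(x) = (H(x), h_x) \to (H(x)\setminus\{x\} \sqcup \{Sx\}, h_x-\delta_x)$ in $\MQ$; by the definition~\eqref{eq : def of morphisms in CQ} of morphisms, this requires that the target be a Serre tilting $\mu_Z(Y(x))$ for $Z$ inside the target's multiset, which it is with $Z = \{x\}$ (note $x \in H(x)$ since $\mathrm{Hom}_{\DQ}(x,x) \supseteq \kk\cdot\mathrm{id}_x \neq 0$, so $H(x)$ contains at least one copy of $x$), and then requires the tensor product over $\mathrm{Supp}$ to be nonzero: the single relevant factor is $\mathrm{Hom}_{\DQ}(x, Sx)$, which by the Serre duality isomorphism~\eqref{eq : isom def of Serre functor} satisfies $\mathrm{Hom}_{\DQ}(x,Sx) \simeq D\,\mathrm{Hom}_{\DQ}(x,x) \neq 0$. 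Hence the morphism space is nonzero, so $\mu_xY(x) \in \mathcal{B}(x) \subseteq \HQ$.

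The main obstacle, if one insisted on the roundabout approach via Lemma~\ref{lem : mutation for objects in CQ}, would be justifying closure of $\HQ$ under passage to tensor factors; this is why the direct membership argument above is preferable, as it sidesteps that issue entirely. The one genuinely substantive input is the verification that $H(x)$ contains a copy of $x$ and that $\mathrm{Hom}_{\DQ}(x,Sx)\neq 0$, both of which reduce to elementary facts: $\mathrm{id}_x \neq 0$ and Serre duality. The only other point worth a sentence is confirming that $\{x\}$ qualifies as an allowable Serre-tilting locus, i.e. that $\mu_xY(x)$ is genuinely of the form $\mu_Z((X,f))$ with $Z$ a finite subset of $X = H(x)$ — immediate from Definition~\ref{def : Serre tiltings} — and that this matches the shape requirement "$(Y,g) = \mu_Z(X,f)$, $Z \subseteq Y$" in~\eqref{eq : def of morphisms in CQ}, which holds since $Z=\{x\}$ with $x\in H(x)$ and the tilted multiset still records the untilted elements. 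Thus the proof is short: unwind the definitions of $\mathcal{B}(x)$, $h_x$, and the morphism spaces, and invoke $\widetilde{h_x}=\delta_x$ together with Serre duality.
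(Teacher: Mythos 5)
Your proposal is correct and matches the paper's intent: the paper states this lemma as ``immediate from the definition'' without further argument, and the check it is implicitly invoking is exactly your direct verification that $\mu_xY(x)\in\mathcal{B}(x)$, namely $h_x-\delta_x\lessdot h_x$ because $\widetilde{h_x}(x)=1>0$, together with $\mathrm{Hom}_{\MQ}(Y(x),\mu_xY(x))\neq 0$ via $\mathrm{Hom}_{\DQ}(x,Sx)\simeq D\,\mathrm{Hom}_{\DQ}(x,x)\neq 0$. You are also right to prefer this direct membership argument over the route through Lemma~\ref{lem : mutation for objects in CQ}, since closure of $\HQ$ under passage to tensor factors is not given.
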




 \section{The subcategory $\HQ^{(1)}$}
 \label{sec : restriction to the section}

In this section, we introduce a certain monoidal subcategory  (which will not be a full subcategory) of $\HQ$, denoted $\HQ^{(1)}$. It will serve as underlying category for all the constructions and results in the following sections. In particular, its bounded homotopy category will eventually be the triangulated monoidal category relevant for categorifying the cluster structure associated to $Q$, when $Q$ is a Dynkin quiver.

 \subsection{The subcategory $\HQ^{(1)}$}
 \label{sec : the subcategory CQone}

We also fix a height function $\xi : I \rightarrow \mathbb{Z}$ adapted to $Q$, i.e. $\xi(j) = \xi(i)-1$ for every arrow $i \rightarrow j$ in $Q$. We then denote by $x_i$ the vertex of $\ZQ$ given by $x_i := (i,\xi(i))$ for every $i \in I$.
 In order to define the morphisms in the subcategory $\HQ^{(1)}$, we will need the following observation. 

    \begin{lem} \label{lem : intersection of hammocks}
        There exists an indecomposable object $x$ in $\DbRepQ$ such that 
        $$ \forall i \in I,  \enspace \mathrm{Hom}_{\DbRepQ}(\tau x_i , x) \neq 0  . $$ 
    \end{lem}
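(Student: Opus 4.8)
The statement asks for a single indecomposable object $x$ in $\DbRepQ$ that receives a nonzero morphism from each $\tau x_i$, $i \in I$, where $x_i = (i,\xi(i))$ are the vertices of the section of $Q$ determined by the height function $\xi$. The natural strategy is to produce $x$ explicitly as a vertex of $\ZQ$ (identified with an indecomposable of $\DbRepQ$ via Happel's theorem) lying far enough ``to the right'' of the $\tau x_i$'s, and then to verify the Hom-nonvanishing using the hammock functions $h_{\tau x_i}$ from Section~\ref{sec : additive functions}. Concretely, since $h_{\tau x_i}(y) = \dim \mathrm{Hom}_{\DbRepQ}(\tau x_i, y)$ for all $y$ on the section $Q_{\tau x_i}$ and in fact for all $y$ in the "hammock" of $\tau x_i$ (the full support of positivity of $h_{\tau x_i}$), it suffices to find a vertex $x$ lying in the intersection $\bigcap_{i \in I} \mathrm{supp}^{+}(h_{\tau x_i})$ of all these hammocks.

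First I would make the combinatorics concrete: recall that for a Dynkin quiver the indecomposables $\tau x_i$ all have their hammocks contained in a bounded region of $\ZQ$, and that the AR-quiver of $\DbRepQ$ is $\ZQ$ itself. The hammock of $\tau x_i$ is the set of $y \in \ZQ$ with an oriented path from $\tau x_i$ to $y$; since $\tau$ shifts by $-2$, these hammocks are (finite, in the Dynkin case) "triangular" regions emanating to the right from $\tau x_i$. I would then pick $x$ to be the vertex $(i_0, \xi(i_0) - 2)$, i.e. $\tau x_{i_0}$, for a well-chosen $i_0$, or — more robustly — a vertex of the form $(i_0, p)$ with $p$ sufficiently negative relative to all $\xi(i)$, chosen along a "longest path" direction. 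The key point is that the section $Q_{\tau x_i} = \tau Q_{x_i}$ passes through the section of $Q$ shifted, so the projective (or injective) at the appropriate vertex will be hit; more precisely, one uses that for each $i$ the indecomposable $\tau x_i$ admits a nonzero map to the indecomposable supported at the unique "sink-like" vertex reached by following arrows, which is independent of $i$.

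The cleanest route, which I would pursue, is: let $P$ be the indecomposable projective $\mathbf{k}Q$-module at a vertex that is a sink of $Q$ (equivalently, work on the injective side). Then $\mathrm{Hom}_{\DbRepQ}(M, I_i) \neq 0$ for every indecomposable $\mathbf{k}Q$-module $M$ with $(I_i : \text{top is supported appropriately})$ — in fact for the injective $I$ at a source of $Q$, every nonzero $\mathbf{k}Q$-module $M$ satisfies $\mathrm{Hom}(M, I) \neq 0$ since $I$ has simple socle at a source and every module has a composition factor there when the quiver is connected — wait, that last claim needs the source's simple to be a composition factor, which fails in general. So instead I would argue via the hammock functions directly: by Lemma~\ref{lem : mutation for functions} and the explicit description of $h_x$, the support-positivity region of $h_{\tau x_i}$ contains the entire "forward cone" of $\tau x_i$ in $\ZQ$, and a finite intersection of forward cones in $\ZQ$ (for a Dynkin or more generally acyclic $Q$) is nonempty because one can always translate far enough by $\tau^{-1}$: the vertex $x := \tau^{-N} x_{i_0}$ for $N$ large and $i_0$ chosen so that $x_{i_0}$ is maximal in $\xi$ lies in all forward cones. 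Verifying $\mathrm{Hom}(\tau x_i, x) \neq 0$ then reduces to checking $h_{\tau x_i}(x) > 0$, which follows from additivity of $h_{\tau x_i}$ propagated rightward from its defining values $1$ on the section $Q_{\tau x_i}$.

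The main obstacle I anticipate is pinning down that the forward cones of the finitely many vertices $\tau x_i$ genuinely have a common vertex, and that Hom-nonvanishing is detected by $h_{\tau x_i}$ at that vertex (a priori $h_{\tau x_i}(y) = \dim\mathrm{Hom}(\tau x_i, y)$ is only asserted on the section $Q_{\tau x_i}$, not everywhere). Resolving the latter requires the standard fact that the hammock function agrees with $\dim\mathrm{Hom}(\tau x_i, -)$ on the whole hammock — this is essentially Gabriel--Brenner--Ringel's hammock theory, or can be re-derived using $\DbRepQ$ having weak kernels (as invoked in the proof of Lemma~\ref{lem : mutation for objects in CQ}) together with induction along AR-triangles. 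So the plan is: (1) recall hammock functions compute Hom-dimensions throughout the hammock; (2) observe each hammock of $\tau x_i$ contains a common vertex $x$ — e.g. take $x$ to be $\tau x_j$ for $j$ with $\xi(j)$ maximal, or iterate $\tau^{-1}$ on such until it lies in all hammocks, using that the $\xi$-values differ by bounded amounts along a connected quiver; (3) conclude $\mathrm{Hom}_{\DbRepQ}(\tau x_i, x) \neq 0$ for all $i$.
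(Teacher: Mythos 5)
There is a genuine gap in step (2) of your plan. For a Dynkin quiver the hammock of $\tau x_i$ --- i.e.\ the set of indecomposables $y$ with $\mathrm{Hom}_{\DbRepQ}(\tau x_i,y)\neq 0$ --- is a \emph{bounded} region of $\ZQ$ (it terminates at $S\tau x_i$ by Serre duality, and $\DbRepQ$ is representation-finite), not the full forward cone of $\tau x_i$. Your candidate $x:=\tau^{-N}x_{i_0}$ for $N$ large therefore satisfies $\mathrm{Hom}_{\DbRepQ}(\tau x_i,x)=0$ for \emph{every} $i$ once $N$ is big enough: iterating $\tau^{-1}$ pushes you out of all the hammocks rather than into their intersection. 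The same issue undermines the claim that positivity of $h_{\tau x_i}$ ``propagates rightward by additivity'': as the paper's own Example~\ref{ex : quasi-additive function} shows, these functions become negative to the right of the hammock, so additivity alone does not give $h_{\tau x_i}(x)>0$ at a far-away vertex. The nonemptiness of the intersection of the finitely many bounded hammocks is precisely the content of the lemma, and your argument does not actually establish it; your fallback attempt via injectives at a source is, as you yourself note mid-proof, also incorrect.

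The paper takes a different and much more economical route: choose a heart $\mathcal{C}\simeq\mathrm{Rep}Q$ of $\DbRepQ$ in which the $\tau x_i$ are the indecomposable projectives, and let $x$ be the indecomposable object of $\mathcal{C}$ with dimension vector $\sum_{i\in I}\alpha_i$ (this is a root for any tree-shaped diagram, since $\bigl|\sum_i\alpha_i\bigr|^2=2n-2(n-1)=2$, so such an indecomposable exists by Gabriel's theorem). Because $\tau x_i$ is projective in $\mathcal{C}$, one has $\dim\mathrm{Hom}(\tau x_i,x)=\langle\boldsymbol{\dim}\,\tau x_i,\boldsymbol{\dim}\,x\rangle_Q$, and a short computation with the Euler--Ringel form shows this is $\geq 1$ for every $i$. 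If you want to salvage your hammock-theoretic picture, the correct statement is that this sincere indecomposable of dimension vector $(1,\dots,1)$ is the common point of all the hammocks; but identifying it requires an argument of the above kind, not a translation by $\tau^{-N}$.
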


     \begin{proof}
          let us consider an abelian subcategory $\mathcal{C}$ of $\DbRepQ$ equivalent to $\mathrm{Rep}Q$ and whose indecomposable projective objects  are the $\tau x_i , i \in I$. We then have that for any indecomposable objects $x$ and $y$ in $\mathcal{C}$, 
  $$ \dim_{\mathbf{k}} \mathrm{Hom}_{\mathcal{C}}(x,y) = \langle \boldsymbol{\dim} x , \boldsymbol{\dim} y \rangle_Q $$
  where $\langle  \cdot ,  \cdot \rangle_Q$ denotes the Euler-Ringel form of $Q$. Consider $x$ an indecomposable object in $\mathcal{C}$ whose dimension vector is given by $\sum_{i \in I} \alpha_i$. Such an object always exists if $Q$ is of Dynkin or tame type. We then have that 
  \begin{align*}
 \langle \boldsymbol{\dim} \tau x_i , \boldsymbol{\dim} x \rangle_Q &=  \langle \sum_{ \substack{j \in I \\ i \rightsquigarrow j}} \alpha_i , \sum_{k \in I} \alpha_k \rangle_Q  = 1 + \sum_{ \substack{  i \rightsquigarrow j \\ k \rightsquigarrow i ,k \neq i}} \langle \alpha_j , \alpha_k \rangle_Q \\
 &= 1 + \sum_{ k \rightarrow i} \langle \alpha_i , \alpha_k \rangle_Q \geq 1 . 
  \end{align*}
   This shows that $\dim_{\mathbf{k}} \mathrm{Hom}_{\DbRepQ} (\tau x_i  , x) \neq 0$ for all $i \in I$ and hence  $x \in \bigcap_{i \in I} H(\tau x_i)$ which establishes the lemma. 
     \end{proof}

 From now we fix an indecomposable object $x$ in $\DbRepQ$ as given by Lemma~\ref{lem : intersection of hammocks}. By Lemma~\ref{lem : intersection of hammocks}, we can choose and fix once for all a non trivial morphism $f_i \in \mathrm{Hom}_{\DbRepQ}(\tau x_i,x)$ for each $i \in I$. We denote by $\overline{f_i} \in \mathrm{Hom}_{\DbRepQ}(x , S \tau x_i)$ the morphism given by the isomorphism~\eqref{eq : isom def of Serre functor}. 

  We now consider the following collections of indecomposable objects in $\HQ$:
 $$ \forall i \in I, \enspace \mathcal{B}_i := \mathcal{B}(\tau x_i) \cap \left\{ \mu_ZY(\tau x_i) \mid Z \subseteq \{\tau x_i , i \in I\} \right\} . $$
 The category $\HQ^{(1)}$ is then defined as the following subcategory of $\HQ$. The class of objects of $\HQ^{(1)}$ consists in the smallest family of objects of $\HQ$ stable under both the additive structure and the monoidal structure of $\HQ$, and containing $F(\tau x_i)$ and $ Y(x_i)$ as well as all the objects belonging to $\mathcal{B}_i$, for every $i \in I$. Given two indecomposable objects $(X,f)$ and $(Y,g)$ in $\HQ^{(1)}$, the set of morphisms from $(X,f)$ to $(Y,g)$ in $\HQ^{(1)}$ is given as follows. By construction of $\HQ$, we can write $(Y,g) = \mu_Z(X,f)$ for some finite subset $Z \subseteq X$. Then $\mathrm{Hom}_{\HQ^{(1)}}((X,f),(Y,g))$ is defined as the subvector space of $\mathrm{Hom}_{\HQ}((X,f),(Y,g))$ generated by the morphisms of the form $f_1 \otimes \cdots \otimes f_r$ with $f_k \in \mathrm{Hom}_{\DbRepQ}(x_k,y_{\sigma(k)})$  such that $y_{\sigma(k)} \in \{x\} \cup  \{S \tau x_i , i \in I\}$ for each $k$ such that $x_k \in Z$. 
 We then denote by $\KQ^{(1)}$ the bounded homotopy category of $\HQ^{(1)}$, which is a triangulated monoidal category. In all what follows we will simply write $\mu_iX$ instead of $\mu_{\tau x_i}X$ for any (indecomposable) object $X$ in $\HQ^{(1)}$ and any $i \in I$.


 \subsection{Weak cokernels in $\HQ^{(1)}$}

 This section is devoted to the proof of Theorem~\ref{thm : existence of morphisms eta} below, who states the existence in $\HQ^{(1)}$ of a distinguished family of  morphisms in  that will play a crucial role in what follows. Before stating the theorem, let us introduce a piece of notation: for any indecomposable object $(X,h)$ in $\HQ^{(1)}$, we  set
 $$ \Sigma(X,h) := \{ i \in I \mid \tau x_i \in X  \enspace \text{and} \enspace \widetilde{h}(\tau x_i) >0 \} .  $$

  \begin{thm} \label{thm : existence of morphisms eta}
    There exists a family of irreducible morphisms 
    $$\{\eta_i^{(X,h)}   \in \mathrm{Hom}_{\HQ^{(1)}} \left( (X,h) , \mu_i (X,h) \right) , \enspace \text{$(X,h)$ indecomposable in $\HQ^{(1)}$}, i \in \Sigma(X,h) \}$$ 
    satisfying the following properties:
     \begin{enumerate}
         \item For every $i \in \Sigma(X,h)$ and every $j \in I$ such that  $j \rightsquigarrow i$ in $Q$, we have that  
         $$j \in \Sigma( \mu_i(X,h)) \qquad \text{and} \qquad  \eta_j^{\mu_i(X,h)} \circ \eta_i^{(X,h)} = 0 . $$ 
         \item For any object $T$ in $\HQ^{(1)}$, and any morphism $f \in \mathrm{Hom}_{\HQ^{(1)}}(\mu_i(X,h), T)$, we have that 
         $$ f \circ \eta_i^{(X,h)} = 0 \quad \Rightarrow \quad \text{$f$ factors through $\bigoplus_{ \substack{j \rightsquigarrow i \\ j \in \Sigma(\mu_i(X,h))}} \eta_j^{\mu_i(X,h)}$.} $$
     \end{enumerate}
  \end{thm}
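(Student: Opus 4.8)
\emph{Strategy and reduction to building blocks.} The plan is to build each $\eta_i^{(X,h)}$ out of the fixed Auslander--Reiten data $f_i\in\mathrm{Hom}_{\DbRepQ}(\tau x_i,x)$ and its Serre--dual $\overline{f_i}\in\mathrm{Hom}_{\DbRepQ}(x,S\tau x_i)$, and to deduce both properties from identities in $\DbRepQ$. Since Serre tiltings at distinct vertices commute and distribute over $\otimes$, and since $\widetilde{h}$ — hence $\Sigma(X,h)$ — is additive over the tensor factors (only the $\mathcal{B}_j$-factors contributing), every indecomposable object of $\HQ^{(1)}$ is, up to isomorphism, a tensor product of objects of the $\mathcal{B}_j$'s with copies of the $Y(x_j)$'s and $F(\tau x_j)$'s, and $i\in\Sigma(X,h)$ is witnessed by at least one tensor factor $B$ lying in $\mathcal{B}_i$. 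I would therefore fix once and for all, coherently by induction along the partial order $\preccurlyeq$ used to build the $\mathcal{B}_j$'s, a distinguished witnessing factor for each pair $((X,h),i)$, set $\eta_i^{(X,h)}:=\mathrm{id}\otimes\eta_i^{B}\otimes\mathrm{id}$, and reduce properties (1) and (2) to their analogues for the finitely many objects $B\in\mathcal{B}_i$. Here a morphism of $\HQ^{(1)}$ out of a tensor product is a sum of tensors of $\DbRepQ$-morphisms indexed by matching bijections, so compositions and factorizations can essentially be read off slot by slot; the one genuine subtlety is that a bijection may mix tensor factors, which I would rule out or control using Lemma~\ref{lem : both ways morphisms} together with the constraint that $\HQ^{(1)}$-morphisms land in $\{x\}\cup\{S\tau x_j\}$ at Serre-tilted slots.

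\emph{The local morphisms and irreducibility.} For $B\in\mathcal{B}_i$, the object $\mu_i B$ replaces one distinguished copy of $\tau x_i$ in the underlying multiset by $S\tau x_i$; its multiset still contains the distinguished copy of $x$ coming from $\mathrm{Hom}_{\DbRepQ}(\tau x_i,x)\neq 0$ (Lemma~\ref{lem : intersection of hammocks}). I take $\eta_i^{B}\colon B\to\mu_i B$ to be the $\HQ^{(1)}$-morphism whose matching bijection transposes the $\tau x_i$-slot with that $x$-slot: it is $f_i$ on the $\tau x_i$-slot (landing on the $x$-slot of $\mu_i B$), it is $\overline{f_i}$ on the $x$-slot (landing on the new $S\tau x_i$-slot), and it is the identity on all remaining slots. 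Through Lemmas~\ref{lem : mutation for functions} and~\ref{lem : mutation for objects in CQ} this is the same as the morphism of dominant objects induced by the Auslander--Reiten triangle at $\tau x_i$, whose connecting morphism $\tau x_i\to S\tau x_i$ is a nonzero scalar multiple of $\overline{f_i}\circ f_i$ (nonzero by non-degeneracy of the Serre pairing; note $\mathrm{Hom}_{\DbRepQ}(\tau x_i,S\tau x_i)\cong D\mathrm{End}_{\DbRepQ}(\tau x_i)$ is one-dimensional since indecomposables of Dynkin quivers are bricks). That $\eta_i^{B}$ is irreducible in $\HQ^{(1)}$ follows from this explicit description: it is neither a split monomorphism nor a split epimorphism because $B\not\simeq\mu_i B$ (Lemma~\ref{lem : both ways morphisms}, different multisets), and any factorization $B\to W\to\mu_i B$ in $\HQ^{(1)}$ induces on the distinguished slots a factorization of $f_i$ or of $\overline{f_i}$ through an indecomposable object of $\DbRepQ$ appearing in $W$ and lying in $\{x\}\cup\{S\tau x_j\}$, which by the minimality built into $\mathcal{B}(\tau x_i)$ via $\preccurlyeq$ forces one of the two morphisms to be an isomorphism.

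\emph{The two properties.} For property (1), the composite $\eta_j^{\mu_i(X,h)}\circ\eta_i^{(X,h)}$ is, by the slot-by-slot analysis, a single tensor of $\DbRepQ$-morphisms whose $\tau x_i$-slot carries the composite $\overline{f_j}\circ f_i\in\mathrm{Hom}_{\DbRepQ}(\tau x_i,S\tau x_j)$. By Serre duality $\mathrm{Hom}_{\DbRepQ}(\tau x_i,S\tau x_j)\cong D\mathrm{Hom}_{\DbRepQ}(\tau x_j,\tau x_i)$, and, realizing the $\tau x_l$ as the indecomposable projectives of a copy of $\mathrm{Rep}Q$ as in the proof of Lemma~\ref{lem : intersection of hammocks}, $\mathrm{Hom}_{\DbRepQ}(\tau x_j,\tau x_i)$ counts paths $i\rightsquigarrow j$ in $Q$, which do not exist when $j\rightsquigarrow i$ because $Q$ is acyclic; hence $\overline{f_j}\circ f_i=0$ and the composite vanishes. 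The statement $j\in\Sigma(\mu_i(X,h))$ is the combinatorial identity $\widetilde{(h-\delta_{\tau x_i})}=-\delta_{x_i}+\sum_{\tau x_i\to y}\delta_y$ together with the description of the out-neighbours of $\tau x_i$ in $\ZQ$ and the same computation of $H(\tau x_i)$. For property (2), I would transport the triangulated structure of $\DbRepQ$: given $f\colon\mu_i(X,h)\to T$ in $\HQ^{(1)}$ with $f\circ\eta_i^{(X,h)}=0$, writing $f$ as a sum of tensors and using the explicit form of $\eta_i^{(X,h)}$ forces each component, on the relevant slots, to kill $f_i$ (resp.\ $\overline{f_i}$), hence to factor through the corresponding weak cokernel in $\DbRepQ$; by the Auslander--Reiten triangle at $\tau x_i$ (equivalently, Lemmas~\ref{lem : mutation for functions} and~\ref{lem : mutation for objects in CQ}) these weak cokernels are assembled precisely from the data of the $\eta_j^{\mu_i(X,h)}$ with $j\rightsquigarrow i$, so that $f$ factors through $\bigoplus_{j}\eta_j^{\mu_i(X,h)}$; the hammock identities guarantee that the multisets match up, so this factorization lies in $\HQ^{(1)}$.

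\emph{Main obstacle.} The hard part is property (2): one must match, on the nose, the triangulated combinatorics of $\DbRepQ$ — weak cokernels of Auslander--Reiten connecting morphisms and their summands — with the rigid, combinatorially constrained morphism spaces of $\HQ^{(1)}$, controlling the matching bijections so that a factorization produced in $\DbRepQ$ really assembles into a morphism of $\HQ^{(1)}$ and no spurious components survive. A close second is arranging the distinguished-slot choices coherently over the entire mutation tree so that (1) and (2) hold simultaneously; I expect this to be dictated by performing all the constructions inductively along $\preccurlyeq$, exactly as the objects of the $\mathcal{B}_j$'s were built.
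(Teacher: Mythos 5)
Your construction of $\eta_i^{(X,h)}$ and your argument for property (1) are essentially the paper's: the morphism is $f_i$ on the $\tau x_i$-slot, $\overline{f_i}$ on the $x$-slot and the identity elsewhere, and the composite in (1) vanishes because $\mathrm{Hom}_{\DbRepQ}(\tau x_i, S\tau x_j)\simeq D\mathrm{Hom}_{\DbRepQ}(\tau x_j,\tau x_i)=0$ when $j\rightsquigarrow i$ and $Q$ is acyclic. One structural difference: the paper never decomposes $(X,h)$ into tensor factors of the $\mathcal{B}_j$'s or chooses a ``distinguished witnessing factor''; it defines $\eta_i^{(X,h)}$ directly on the whole multiset, the presence of both $\tau x_i$ and $x$ in $X$ being guaranteed by Lemma~\ref{lem : preliminary for def of eta} (via Lemmas~\ref{lem : positivity on the section} and~\ref{lem : intersection of hammocks}). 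This sidesteps entirely the coherence and ``mixing bijections'' issues you flag, which your reduction introduces without resolving.

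The genuine gap is in property (2). You assert that a component $a\colon x\to z$ of $f$ with $a\circ f_i=0$ factors through ``the corresponding weak cokernel in $\DbRepQ$'' and that these weak cokernels ``are assembled precisely from the data of the $\eta_j^{\mu_i(X,h)}$.'' Neither claim is substantiated, and the first points in the wrong direction: the weak cokernel of $f_i\colon\tau x_i\to x$ in the triangulated category $\DbRepQ$ is its cone, which is not a sum of objects $S\tau x_j$, and even granting a factorization of $a$ through some object of $\DbRepQ$, this does not assemble into a factorization of $f$ through $\bigoplus_j\eta_j^{\mu_i(X,h)}$ in $\HQ^{(1)}$ --- each $\eta_j$ acts on \emph{two} slots (it also carries the $\tau x_j$-slot onto the $x$-slot), so the required factorization is a combinatorial re-indexing of slots, not a homological factorization on the $x$-slot alone. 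The mechanism the paper actually uses is the restricted definition of $\mathrm{Hom}_{\HQ^{(1)}}$: codomains at Serre-tilted slots are confined to $\{x\}\cup\{S\tau x_j, j\in I\}$, so $f\circ\eta_i^{(X,h)}=0$ with $f\neq0$ forces $z=S\tau x_k$ for some $k\rightsquigarrow i$; since $\mathrm{Hom}_{\DbRepQ}(x,S\tau x_k)\simeq D\mathrm{Hom}_{\DbRepQ}(\tau x_k,x)$ is one-dimensional, $a$ is a scalar multiple of $\overline{f_k}$, and one then writes down the factoring morphism $h\colon\mu_{\{\tau x_i,\tau x_k\}}(X,h)\to T$ explicitly (identity on $S\tau x_k$, the value $f(\tau x_k)$ on the new $x$-slot, $f$ elsewhere) and checks $h\circ\eta_k^{\mu_i(X,h)}=f$. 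Your sketch also omits the verification that the relevant $k$ lies in $\Sigma(\mu_i(X,h))$, which the paper handles using the positivity~\eqref{eq : positivity}.
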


 Theorem~\ref{thm : existence of morphisms eta} will be proved by constructing the morphisms $\eta_i^{(X,h)}$ explicitly for each indecomposable object $(X,h)$ in $\HQ^{(1)}$ and each $i \in \Sigma(X,h)$. Before providing the definition of these morphisms, we  need a couple of preliminary lemmas.

 \begin{lem} \label{lem : positivity on the section}
     For every indecomposable object $(X,h)$ in $\HQ^{(1)}$, we have that $\widetilde{h}(\tau x_i) \geq 0$ for all $i \in I$. 
 \end{lem}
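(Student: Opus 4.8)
The plan is to reduce the statement to the indecomposable generators of $\HQ^{(1)}$ and then squeeze the inequality out of the partial order $\preccurlyeq$ appearing in the definition of the families $\mathcal{B}_i$. Since $\widetilde{(\cdot)}$ is linear and the tensor product of $\MQ$ adds the quasi-additive functions, and since an object of $\HQ^{(1)}$ is a direct sum of tensor products of the $\tMQ$-objects $F(\tau x_j)$, $Y(x_j)$ and $\mu_Z Y(\tau x_j)\in\mathcal{B}_j$ (these tensor products being indecomposable by Corollary~\ref{cor : indecomposable objects in MQ}), it suffices to check $\widetilde{h}(\tau x_i)\ge 0$ for $h$ the quasi-additive function of each such generator. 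For $F(\tau x_j)$ the function is $0$; for $Y(x_j)$ it is $h_{x_j}$, and $\widetilde{h_{x_j}}=\delta_{x_j}\ge 0$. So the only case with content is $(X,h)=\mu_Z Y(\tau x_j)$ with $Z\subseteq\{\tau x_l,l\in I\}$, for which $h=h_{\tau x_j}-\sum_{z\in Z}\delta_z$.

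I would then compute $\widetilde{h}(\tau x_i)$ explicitly. Using $\widetilde{h_{\tau x_j}}=\delta_{\tau x_j}$ and the identity $\widetilde{\delta_x}=\delta_x+\delta_{\tau^{-1}x}-\sum_{x\to y}\delta_y$ established in the proof of Lemma~\ref{lem : mutation for functions}, together with the hypothesis that $\xi$ is adapted to $Q$ (so that $\tau x_l\to\tau x_{l'}$ is an arrow of $\ZQ$ precisely when $l'\to l$ is an arrow of $Q$, and $x_l=\tau^{-1}(\tau x_l)$ is never of the form $\tau x_i$), one obtains $\widetilde{\delta_{\tau x_l}}(\tau x_i)=\delta_{l,i}-\varepsilon_{i,l}$, where $\varepsilon_{i,l}=1$ if there is an arrow $i\to l$ in $Q$ and $0$ otherwise. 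Hence
$$ \widetilde{h}(\tau x_i)=\delta_{i,j}-\sum_{\tau x_l\in Z}\bigl(\delta_{l,i}-\varepsilon_{i,l}\bigr). $$
If $\tau x_i\notin Z$ this equals $\delta_{i,j}+\#\{\,l:\tau x_l\in Z,\ i\to l\ \text{in}\ Q\,\}\ge 0$, and the same bound holds if $\tau x_i\in Z$ and $i=j$. The only remaining situation is $\tau x_i\in Z$ with $i\neq j$, in which $\widetilde{h}(\tau x_i)=-1+\#\{\,l:\tau x_l\in Z,\ i\to l\ \text{in}\ Q\,\}$, so I must exhibit an arrow $i\to l$ of $Q$ with $\tau x_l\in Z$.

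This last point is the heart of the proof, and it is where the condition $h\preccurlyeq h_{\tau x_j}$ — part of the definition of $\mathcal{B}(\tau x_j)$, hence of $\mathcal{B}_j$ — is used. Unfolding $\preccurlyeq$ as the transitive closure of $\lessdot$, there is a chain $h=g_0\lessdot g_1\lessdot\cdots\lessdot g_m=h_{\tau x_j}$; writing $g_t=g_{t-1}+\delta_{w_t}$, the $w_t$ are pairwise distinct and enumerate $Z$, say $w_t=\tau x_{l_t}$, and the covering condition $\widetilde{g_t}(w_t)>0$ translates, via the formula for $\widetilde{\delta_{\tau x_l}}$ above, into
$$ \delta_{j,l_t}+\#\{\,s>t:l_t\to l_s\ \text{in}\ Q\,\}>0\qquad\text{for all}\ t. $$
In particular $l_m=j$, and for each $t$ with $l_t\neq j$ there is $s>t$ with an arrow $l_t\to l_s$ in $Q$. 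Applying this to the index $t$ with $l_t=i$ (which exists because $\tau x_i\in Z$, and satisfies $t<m$ since $i\neq j$) produces an arrow $i\to l_s$ in $Q$ with $\tau x_{l_s}\in Z$, whence $\widetilde{h}(\tau x_i)\ge 0$. I expect the main obstacle to be exactly this translation: setting up the bookkeeping of the $\lessdot$-chain correctly and matching the positivity of each covering step with the combinatorics of the arrows of $Q$ read off on the section $\{\tau x_l,l\in I\}$ of $\ZQ$; the rest is routine linear algebra on quasi-additive functions.
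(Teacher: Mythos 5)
Your proof is correct and is in substance the paper's own argument: the paper proceeds by induction, showing that $\widetilde{h}(\tau x_i)\geq 0$ holds for dominant objects and is preserved under each Serre tilting $\mu_j$ performed at a vertex with $\widetilde{h}(\tau x_j)>0$, which is exactly the information carried by the $\lessdot$-chains you unfold, and your key formula $\widetilde{\delta_{\tau x_l}}(\tau x_i)=\delta_{l,i}-\varepsilon_{i,l}$ is the same one the paper uses (your arrow direction is the consistent one -- the case ``$-1$ if $j\rightarrow i$'' in the paper's displayed formula appears to have the arrow reversed, though this does not affect either argument). Your explicit reduction to the tensor generators $F(\tau x_j)$, $Y(x_j)$ and the elements of $\mathcal{B}_j$, which the paper leaves implicit, is a clarification of the same method rather than a different one.
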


  \begin{proof}
    The statement is obviously true if $(X,h)$ is dominant as in this case $h$ is (by definition) a sum of $h_{\tau x_k}$, and $\widetilde{h_{\tau x_k}}(z) = \bdelta_{\tau x_k, z} \geq 0$ for any $z \in \ZQz$. Assume the statement is true for $(X,h)$ (indecomposable) in $\HQ^{(1)}$. Then for each $j \in I$ such that $\widetilde{h}(\tau x_j)>0$ we have that 
     $$ \forall i \in I, \enspace  \widetilde{h -\bdelta_{\tau x_j}} (\tau x_i) = \widetilde{h}(\tau x_i) - \begin{cases}
         1 & \text{if $i=j$,} \\
         -1 & \text{if $j \rightarrow i$,} \\
         0 & \text{otherwise,}
     \end{cases} $$
     which implies that $\widetilde{h -\bdelta_{\tau x_j}} (\tau x_i) \geq 0$ for all $i \in I$ and hence the statement is true for any Serre tilting  $ \mu_j(X,h)$ in the category $\HQ^{(1)}$. By induction, this establishes the lemma.
  \end{proof}

Before moving on, we need one more piece of notation. For every $i \in I$, define 
 $$ H_i := H(\tau x_i) \cap \{\tau x_k , k \in I\} = \{ \tau x_j \mid j \rightsquigarrow i\}.$$
 where the second equality is due to the fact that $\mathrm{Hom}_{\DbRepQ}(\tau x_j , \tau x_i) =1$ if and only if there is an oriented path from $i$ to $j$ in $Q$, and is zero otherwise. 

\begin{lem} \label{lem : preliminary for def of eta}
Let $(X,h)$ be an indecomposable object in $\HQ^{(1)}$. Then we have that 
$$ \bigsqcup_{i \in I} H_i^{\sqcup \widetilde{h}(\tau x_i)} \subset X . $$
\end{lem}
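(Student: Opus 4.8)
The plan is to argue by induction on the number of Serre tiltings needed to reach $(X,h)$ from a dominant object, following exactly the inductive skeleton already used in the proof of Lemma~\ref{lem : positivity on the section}. First I would treat the base case where $(X,h)$ is dominant. In that case $h = \sum_{k} h_{\tau x_k}$ for some (finite) collection of indices, so $\widetilde h = \sum_k \delta_{\tau x_k}$ and $\widetilde h(\tau x_i)$ is precisely the number of times $Y(\tau x_i)$ occurs as a tensor factor. Each such factor contributes a copy of the hammock multiset $H(\tau x_i)$ to $X$; since $H_i = H(\tau x_i)\cap\{\tau x_k,\ k\in I\}\subseteq H(\tau x_i)$ by definition, each occurrence of $Y(\tau x_i)$ contributes a copy of $H_i$, and these contributions (over all $i$ and all occurrences) are disjoint as submultisets of $X$. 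This gives $\bigsqcup_{i\in I} H_i^{\sqcup\widetilde h(\tau x_i)}\subseteq X$ in the dominant case.

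For the inductive step, suppose the containment holds for an indecomposable $(X,h)$ in $\HQ^{(1)}$ and consider a one-step Serre tilting $\mu_j(X,h) = \bigl((X\setminus\{\tau x_j\})\sqcup\{S\tau x_j\},\ h-\delta_{\tau x_j}\bigr)$, legitimate only when $\widetilde h(\tau x_j)>0$, i.e. $j\in\Sigma(X,h)$. Write $h' := h-\delta_{\tau x_j}$ and $X' := (X\setminus\{\tau x_j\})\sqcup\{S\tau x_j\}$. As computed in Lemma~\ref{lem : positivity on the section}, $\widetilde{h'}(\tau x_i) = \widetilde h(\tau x_i)$ for $i\neq j$ with $j\not\rightarrow i$, while $\widetilde{h'}(\tau x_j) = \widetilde h(\tau x_j)-1$ and $\widetilde{h'}(\tau x_i)=\widetilde h(\tau x_i)+1$ when $j\rightarrow i$. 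So I must produce, inside $X'$, one copy of $H_i$ for each of these values. The multiset $X'$ is obtained from $X$ by deleting one copy of $\tau x_j$ and inserting one copy of $S\tau x_j$; the inserted element $S\tau x_j = \Sigma\tau\,\tau x_j$ lies in no $H_k$ (it sits in a shifted abelian piece, not on the section of projectives), so removing $\tau x_j$ is the only change that matters. The key point is that $\tau x_j$ belongs to $H_k$ precisely for those $k$ with $j\rightsquigarrow k$ in $Q$ (this is the identity $H_k = \{\tau x_\ell \mid \ell\rightsquigarrow k\}$ recalled just before the lemma, read with $\ell=j$), and in particular $\tau x_j\in H_j$ and $\tau x_j\in H_i$ for each arrow $j\rightarrow i$. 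I would choose the copies realizing the induction hypothesis so that the deleted copy of $\tau x_j$ is used only for the ``$H_j$-slot'', and then rebalance: we lose one $H_j$-copy (matching $\widetilde{h'}(\tau x_j)=\widetilde h(\tau x_j)-1$), and for each arrow $j\rightarrow i$ we need one extra $H_i$-copy, which we can supply since $\mu_j$ is being performed in $\HQ^{(1)}$, where the very definition of the morphisms (through the fixed maps $f_i\in\mathrm{Hom}(\tau x_i,x)$) forces $x\in X$ and in fact the required hammock elements of $H_i\setminus\{\tau x_j\}$ to already be present. More precisely, I would verify directly from $\mathrm{Hom}_{\HQ^{(1)}}\bigl(Y(\tau x_i),(X,h)\bigr)\neq 0$ and $h\preccurlyeq h_{\tau x_i}$ for the relevant $i$ that each needed copy of $H_i$ is available disjointly.

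The main obstacle I anticipate is the bookkeeping of disjointness across the induction: the induction hypothesis only asserts the existence of \emph{some} disjoint family of copies $H_i^{\sqcup\widetilde h(\tau x_i)}$ inside $X$, and I need to be able to choose that family compatibly so that exactly one designated $H_j$-copy passes through the particular element $\tau x_j$ that $\mu_j$ removes, while all $H_i$-copies for $j\rightsquigarrow i$ that I must \emph{enlarge} avoid that element outside its $\tau x_j$-entry. Handling this cleanly will require either strengthening the inductive statement to record such a choice (e.g.\ fixing a coherent system of injections of the $H_i$'s into $X$ and tracking how $\mu_j$ acts on it), or arguing via the numerical identity of Lemma~\ref{lem : mutation for functions} that the multiset $H(\tau x_j)$ decomposes as $\{\tau x_j,\Sigma\tau x_j\}\sqcup\bigsqcup_{j\rightarrow i}H(\tau x_i)$ modulo the Auslander–Reiten triangle, so that deleting $\tau x_j$ from $X$ and simultaneously crediting the $H_i$'s for $j\rightarrow i$ is forced by the same combinatorics as $\widetilde{h'}$. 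I expect the second route — pushing the multiset identity behind Lemma~\ref{lem : mutation for objects in CQ} through the induction — to be the cleanest, with the only real work being to confirm it stays within $\HQ^{(1)}$ rather than merely $\HQ$.
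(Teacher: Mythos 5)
Your overall strategy --- induction on the number of Serre tiltings from the dominant base case --- is exactly the paper's, and your base case is fine. The gap is in the inductive step, where you never pin down the combinatorial identity that makes the rebalancing work, and both candidates you offer are off target. The identity you need is not a decomposition of the full hammock multiset $H(\tau x_j)$ (your cited version, $H(\tau x_j)=\{\tau x_j,\Sigma\tau x_j\}\sqcup\bigsqcup_{j\to i}H(\tau x_i)$, is not the content of Lemma~\ref{lem : mutation for objects in CQ}, which reads $H(x)\sqcup H(\tau^{-1}x)=\{x,\Sigma x\}\sqcup\bigsqcup_{x\to y}H(y)$), but a decomposition of the restricted multiset $H_j$: since the underlying graph of $Q$ is a tree, every $l\rightsquigarrow j$ with $l\neq j$ factors through a unique in-neighbour $k\to j$, whence $H_j=\{\tau x_j\}\sqcup\bigsqcup_{k\to j}H_k$. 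Note the arrow direction: a direct computation of $\widetilde{\delta_{\tau x_j}}$ shows that the Serre tilting $\mu_j$ increments $\widetilde h(\tau x_k)$ at the \emph{in}-neighbours $k\to j$ (the case label ``$j\to i$'' in the displayed formula of Lemma~\ref{lem : positivity on the section}, which you copied, has the arrow reversed); with your out-neighbour version the decomposition of $H_j$ is simply false (take $Q\colon 1\to 2$ and $j=2$, where $H_2=\{\tau x_1,\tau x_2\}$ but $2$ has no out-neighbours).

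Once you have $H_j=\{\tau x_j\}\sqcup\bigsqcup_{k\to j}H_k$, the disjointness bookkeeping you flag as ``the main obstacle'' disappears: combining it with the multiplicities $\widetilde{h'}(\tau x_j)=\widetilde h(\tau x_j)-1$ and $\widetilde{h'}(\tau x_k)=\widetilde h(\tau x_k)+1$ for $k\to j$ gives the global multiset identity $\{\tau x_j\}\sqcup\bigsqcup_i H_i^{\sqcup\widetilde{h'}(\tau x_i)}=\bigsqcup_i H_i^{\sqcup\widetilde h(\tau x_i)}$. The right-hand side lies in $X$ by induction, and since multiset containment is checked multiplicity by multiplicity --- no coherent system of injections is needed --- the left-hand side minus $\{\tau x_j\}$ lies in $X\setminus\{\tau x_j\}\subset X'$. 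This is exactly how the paper concludes. In particular, your fallback of extracting the extra $H_i$-copies from the nonvanishing of $\mathrm{Hom}_{\HQ^{(1)}}\bigl(Y(\tau x_i),(X,h)\bigr)$ is neither needed nor the right mechanism: the new copies are supplied entirely by the dismantled copy of $H_j$.
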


 Note that the multiset $ H_i^{\sqcup \widetilde{h}(\tau x_i)}$ is well-defined thanks to Lemma~\ref{lem : positivity on the section}. 

 \begin{proof}
    Assume $(X,h) = Y(\tau x_i)$ with $i \in I$. Then by definition we have that $X = H(\tau x_i) \supset \tHi$  and $\widetilde{h}(\tau x_i) = 1$ so the statement holds  in that case, and hence holds for every  (indecomposable) dominant object. Assume the statement has been proved for some indecomposable object $(X,h)$. We shall prove that the statement still holds for  $(X',h') := \mu_j(X,h)$ for each $j \in I$ is such that $\widetilde{h}(\tau x_j)>0$. Recalling Definition~\ref{def : Serre tiltings} we have that 
   $$ \bigsqcup_{i \in I} H_i^{\sqcup \widetilde{h'}(\tau x_i)} = H_j^{\sqcup \widetilde{h}(\tau x_j)-1} \sqcup \bigsqcup_{k \rightarrow j}H_k^{\sqcup \widetilde{h}(\tau x_k)+1} \sqcup \bigsqcup_{l}H_l^{\widetilde{h}(\tau x_l)} $$
   where $l$ runs over all elements of $I$ that are neither $j$ itself nor satisfy $l \rightarrow j$. Now, it is straightforward to check that 
$$ H_j = \{ \tau x_j \} \sqcup \bigsqcup_{k \rightarrow j} H_k $$
because, as the underlying graph of $Q$ is assumed to be a tree, a vertex $i$ such that $i \rightsquigarrow j$ can satisfy $i \rightsquigarrow k$ for only a unique $k \rightarrow j$ (except if $i$ is $j$ itself). 
Therefore we get 
$$ \{ \tau x_j \} \sqcup \bigsqcup_{i \in I} H_i^{\sqcup \widetilde{h'}(\tau x_i)} = \bigsqcup_{i \in I} H_i^{\sqcup \widetilde{h}(\tau x_i)}  \subset X  $$
and hence 
$$\bigsqcup_{i \in I} H_i^{\sqcup \widetilde{h'}(\tau x_i)}  \subset X \setminus \{ \tau x_j \} \subset X' . $$
 \end{proof}

In particular,  Lemma~\ref{lem : preliminary for def of eta} implies that if $(X,h)$ is an indecomposable object in $\HQ^{(1)}$ such that $\widetilde{h}(\tau x_i) >0$ for some $i \in I$ then both $\tau x_i$ and $x$ appear (at least once) in $X$ (recall that $x$ denotes the chosen indecomposable object in $\DbRepQ$ provided by Lemma~\ref{lem : intersection of hammocks}).
 Consequently, we can define an irreducible morphism $\eta_i^{(X,h)}$ for every indecomposable object $(X,h)$ in $\HQ^{(1)}$ and every $i \in \Sigma(X,h)$ by choosing a pair of non-zero morphisms $f_i \in \mathrm{Hom}_{\DbRepQ}(\tau x_i , x)$ and $g_i \in \mathrm{Hom}_{\DbRepQ}(x , S \tau x_i)$ and then set 
 \begin{equation}
     \eta_i^{(X,h)} := 
        f_i \otimes g_i \otimes \bigotimes_{y \in X \setminus \{\tau x_i , x \}} \mathrm{id}_y 
 \end{equation}
    It remains to show that these morphisms satisfy the desired properties. 

     \begin{proof}[\textbf{Proof of Theorem~\ref{thm : existence of morphisms eta}}.]
     
Let $(X,h)$ be an indecomposable object in $\HQ^{(1)}$, $i \in \Sigma(X,h)$ and $j \in I$ such that there is an oriented path from $j$ to $i$ in $Q$. We denote $(X',h') :=  \mu_i(X,h)$. In view of Definition~\ref{def : Serre tiltings},  Lemma~\ref{lem : positivity on the section} implies 
 \begin{equation} \label{eq : positivity}
 \widetilde{h'}(\tau x_j) = \widetilde{h- \bdelta_{\tau x_i}}(\tau x_j) = \widetilde{h}(\tau x_j) + 1 >0  
  \end{equation}
which show that  $\Sigma(X',h')$ contains all $j \in I$ such that $i \leftarrow j$. Then, for any $j \in \Sigma(X',h')$ such that $j \rightsquigarrow i$, denoting $X'' := X \setminus  \{ \tau x_i, \tau x_j , x \}$ we have
\begin{align*}
    \eta_j^{(X',h')} \circ \eta_i^{(X,h)} &= \left( g_j \otimes f_j \otimes  \mathrm{id}_{S \tau x_i}  \otimes \bigotimes_{y \in X''} \mathrm{id}_y \right) \circ \left( f_i \otimes \mathrm{id}_{\tau x_j} \otimes  g_i \otimes  \bigotimes_{y \in X''} \mathrm{id}_y   \right) \\
     &= (g_j \circ f_i) \otimes f_j \otimes g_i \otimes  \bigotimes_{y \in X''} \mathrm{id}_y \\
     &= 0 
\end{align*}
      because 
      $$ g_j \circ f_i \in \mathrm{Hom}_{\DbRepQ}(\tau x_i , S \tau x_j) \simeq D\mathrm{Hom}_{\DbRepQ}(\tau x_j , \tau x_i) = 0 $$
      as there is no oriented path from $i$ to $j$ in $Q$ given that  $j \rightsquigarrow i$ and $Q$ is acyclic. This establishes the property (i) of Theorem~\ref{thm : existence of morphisms eta}. 
      
      Let us now assume $f \in \mathrm{Hom}_{\HQ^{(1)}}(\mu_i(X,h) , T)$ such that $f \circ \eta_i^{(X,h)} = 0$. Assume without loss of generality that $T$ is indecomposable. Let us write $f = a \otimes b  $  where $a \in \mathrm{Hom}_{\DbRepQ}(x,z)$ and $b$ is a tensor product of non-trivial morphisms with domains in $ \mu_i X \setminus \{ x\}$. The composition with $\eta_i^{(X,h)}$ can be trivial only via $f_i \circ a$ where $f_i \in \mathrm{Hom}_{\DbRepQ}( \tau x_i , x)$. Hence the codomain  of $a$ must be of the form $z = S \tau x_k$ with $k \rightsquigarrow i$. Moreover the set  $\{ k \in I \mid  \text{$k \rightsquigarrow j$ and $\widetilde{h'}(\tau x_k) >0$ } \} $ is not empty (as it contains for instance any $j$ such that $j \rightarrow i$, by~\eqref{eq : positivity}). For any $k$ in that set, recall that by definition of $\HQ^{(1)}$ we have that $f(\tau x_k) \in \{x\} \sqcup \{S \tau x_j , j \in I \}$. Hence we can consider the morphism $h : \mu_{ \{ \tau x_i , \tau x_k \}}(X,h) \rightarrow T$ given by 
      $$ h : S \tau x_k \rightarrow  S \tau x_k \quad x \rightarrow f(\tau x_k) \quad  \text{$y \rightarrow f(y)$ for all $ y \neq x, S \tau x_k$ }.   $$
      Then it is straightforward to check that $h\circ \eta_k^{\mu_i(X,h)} = f $. This establishes the second property of Theorem~\ref{thm : existence of morphisms eta}.

     \end{proof}


  \section{Exact chain complexes in $\KQ^{(1)}$}
 \label{sec : exact chain complexes}

   \subsection{$\HQ^{(1)}$-exact chain complexes }
    \label{sec : exactness}

 Given an \textit{indecomposable dominant} object $X$ in $\HQ^{(1)}$ that we write under the form
 $$ X := \bigotimes_{k \in I} Y(\tau x_k)^{\otimes c_k} \otimes_{\HQ^{(1)}} Y(x_k)^{\otimes d_k} $$
 where $c_k, d_k$ are non-negative integers for every $k \in I$, we define an element $\bomega(X)$ of the positive orthant of the root lattice of $\mathfrak{g}$ as follows. 
 For each $i \in I$, we set
 \begin{equation} \label{eq : def of ri}
  r_i := \sum_{ \substack{ i \rightsquigarrow j \\ \text{$j$ sink in $Q$} }} (\xi(i)-\xi(j)) .  
   \end{equation}
 Note that $r_i$ is a non-negative integer for each $i \in I$, and moreover we have that $r_i = 0$ if (and only if) $i$ is a sink in $Q$. 
 We define a family $\{a_i, i \in I\}$ of non-negative integers by induction on $r_i$. If $r_i= 0$, i.e. $i$ is a sink in $Q$, we set $a_i := \max(0,c_i-d_i)$. Then, if $i \in I$ is such that $a_j$ has been constructed for all $j \in I$ satisfying $r_j < r_i$, we set 
 $$ a_i := \max \left(  0 ,  c_i-d_i + \sum_{i \rightarrow j} a_j  \right) .  $$
  This is well-defined as one can immediately check that  $r_j < r_i$ if there is an arrow $i \rightarrow j$ in $Q$. We then define $\bomega(X)$ as 
  $$ \bomega(X) := \sum_{i \in I} a_i \alpha_i  $$
  and we denote by $\mathrm{Supp}(X)$ the subset of $I$ given by 
 $$ \mathrm{Supp}(X) := \{i \in I \mid a_i >0\} .   $$
  We can now introduce the main definitions of this section. 

   \begin{deftn} \label{def : CQ1 exact chain complexes}
  Let $C_{\bullet}$ be a chain complex in $\KQ^{(1)}$. We say that $C_{\bullet}$ is $\HQ^{(1)}$-exact if the following properties hold:
   \begin{itemize}
       \item We have that $C_n = 0$ for all $n<0$ and $C_0$ is indecomposable dominant. 
       \item For every $n>0$ and every finite set $Z \subset \ZQz$, we have that 
       $$ Z \cap \mathrm{Supp}(C_0) \neq \emptyset \quad \Rightarrow \quad \forall f : C_n \rightarrow \mu_Z(C_0), f \circ d_{n-1}^{C_{\bullet}} = 0 \Rightarrow \text{$f$ factors through $d_n^{C_{\bullet}}$.} $$
   \end{itemize}
   \end{deftn}

 \begin{rk}
     The fact that the codomain of $f$ in the second condition of Definition~\ref{def : CQ1 exact chain complexes} is written as $\mu_Z(C_0)$ is not restrictive. Indeed, by construction of the category $\HQ$, any indecomposable object $X$ such that there is a non-trivial morphism from $C_n$ to $X$ has to be a Serre tilting of an indecomposable summand of $C_n$; this summand is itself is a Serre tilting of $C_0$ in view of the sequence of morphisms $C_0 \xrightarrow[]{d_0} \rightarrow \cdots \rightarrow C_n $, and thus $X$ is a Serre tilting of $C_0$. 
 \end{rk}

  \begin{deftn} \label{def : strong exactness}
     Let $C_{\bullet}$ be a chain complex in $\KQ^{(1)}$.  We say that $C_{\bullet}$ is strongly $\HQ^{(1)}$-exact if $C_{\bullet}$ is $\HQ^{(1)}$-exact and moreover all the non-trivial components of each differential of $C_{\bullet}$ are of the form $\eta_i \otimes \mathrm{id}$ for some $i \in \mathrm{Supp}(C_0)$. 
     
   \end{deftn}

 We can show the following. 

\begin{lem} \label{lem :uniqueness up to isom}
Let  $C_{\bullet}$ and $D_{\bullet}$ be objects in $\KQ^{(1)}$. Assume that both $C_{\bullet}$ and $D_{\bullet}$ are strongly $\HQ^{(1)}$ exact  and that $C_0 \simeq D_0$. Then $C_{\bullet} \simeq D_{\bullet}$ in $\KQ^{(1)}$. 
\end{lem}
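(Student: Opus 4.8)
The plan is to build an explicit isomorphism of chain complexes degree by degree, using the combinatorial rigidity coming from strong $\HQ^{(1)}$-exactness. First I would observe that, since $C_0 \simeq D_0$ is dominant and both complexes are strongly $\HQ^{(1)}$-exact, the element $\bomega(C_0) = \bomega(D_0)$ and hence $\mathrm{Supp}(C_0) = \mathrm{Supp}(D_0)$ are the same; this fixes the combinatorial data governing the allowed differentials. The key point is that Definition~\ref{def : strong exactness} forces every nontrivial component of each $d_n$ to be of the form $\eta_i \otimes \mathrm{id}$ with $i \in \mathrm{Supp}(C_0)$, and Theorem~\ref{thm : existence of morphisms eta}(i) tells us exactly which such composites vanish. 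So the differentials in each of $C_{\bullet}$ and $D_{\bullet}$ are essentially determined up to the choice of which indecomposable summands appear in each $C_n$ and how the $\eta_i$'s are wired together; the $\HQ^{(1)}$-exactness condition (the factorization property) then pins down the summands of $C_{n+1}$ once $C_n$ and $d_{n-1}$ are known.

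The main argument would be an induction on homological degree $n$. Suppose I have built compatible isomorphisms $\varphi_k : C_k \xrightarrow{\sim} D_k$ for $k \leq n$ intertwining the differentials $d_k$ for $k < n$. I want to produce $\varphi_{n+1}$. The idea is: the morphism $d_n^{D_\bullet} \circ \varphi_n : C_n \to D_{n+1}$ satisfies $(d_n^{D_\bullet}\varphi_n) \circ d_{n-1}^{C_\bullet} = d_n^{D_\bullet} d_{n-1}^{D_\bullet}\varphi_{n-1} = 0$ (using $d_n d_{n-1}=0$ in $D_\bullet$ and the intertwining at level $n-1$). By $\HQ^{(1)}$-exactness of $C_{\bullet}$ — applied componentwise over the Serre tiltings $\mu_Z(C_0)$ of $C_0$ hitting $D_{n+1}$, which is legitimate by the Remark after Definition~\ref{def : CQ1 exact chain complexes} — this morphism factors as $\psi \circ d_n^{C_\bullet}$ for some $\psi : C_{n+1} \to D_{n+1}$. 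Symmetrically one gets $\psi' : D_{n+1} \to C_{n+1}$ with $\psi' \circ d_n^{D_\bullet} = d_n^{C_\bullet}\circ (\varphi_n)^{-1}$ — wait, more carefully, one runs the same factorization for $d_n^{C_\bullet}(\varphi_n)^{-1} : D_n \to C_{n+1}$ — and then checks that $\psi$ and $\psi'$ are mutually inverse up to a homotopy, or, using the strong exactness to rigidify, that one can choose them to be honest inverses. Since everything is happening in the homotopy category $\KQ^{(1)}$, it suffices to produce a homotopy equivalence, which relaxes the bookkeeping considerably: I only need a chain map inducing isomorphisms, and the factorization property is precisely what lets me lift.

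The hard part will be controlling the \emph{indecomposable summands} in each $C_n$ versus $D_n$ — a priori the factorization property only gives a map through $d_n$, not an isomorphism, and one could worry that $C_{n+1}$ carries extra contractible summands absent from $D_{n+1}$. This is where strong $\HQ^{(1)}$-exactness does the real work: because all differential components are of the prescribed form $\eta_i \otimes \mathrm{id}$ and these are irreducible (Theorem~\ref{thm : existence of morphisms eta}), any summand of $C_{n+1}$ not reached nontrivially from $C_n$ and not mapping nontrivially onward would split off as a direct summand complex concentrated in one degree, hence be zero in the reduced/minimal model; so after passing to minimal complexes (or arguing that strong $\HQ^{(1)}$-exact complexes are automatically minimal) the summands of $C_{n+1}$ and $D_{n+1}$ are forced to agree. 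I would isolate this as a preliminary lemma — "a strongly $\HQ^{(1)}$-exact complex has no contractible direct summand, and its terms are determined by $C_0$" — and then the inductive isomorphism construction goes through cleanly, giving $C_{\bullet} \simeq D_{\bullet}$ in $\KQ^{(1)}$.
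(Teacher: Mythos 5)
Your proposal is correct and follows essentially the same route as the paper: induct on homological degree, use the factorization property of $\HQ^{(1)}$-exactness to lift $d_n^{D_\bullet}\circ\varphi_n$ through $d_n^{C_\bullet}$, invoke strong exactness to force the terms and differential components of the two complexes to agree, and run the argument in both directions to get mutually inverse chain maps. The only detail worth making explicit (which the paper does) is that strong exactness of $D_\bullet$ guarantees every indecomposable summand of $D_{n+1}$ is of the form $\mu_Z(D_0)$ with $Z\subset\mathrm{Supp}(D_0)=\mathrm{Supp}(C_0)$, so the hypothesis $Z\cap\mathrm{Supp}(C_0)\neq\emptyset$ of Definition~\ref{def : CQ1 exact chain complexes} is actually satisfied when you apply exactness of $C_\bullet$.
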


 \begin{proof}
 The assumptions obviously  imply that $\mathrm{Supp}(C_0) = \mathrm{Supp}(D_0)$. 
  Let us choose an isomorphism $u_0 : C_0 \rightarrow D_0$. The strong exactness of $C_{\bullet}$ and $D_{\bullet}$ implies that $d_0^{C\bullet}$ and $d_0^{D_{\bullet}}$ have the same non trivial components  and therefore we have that $C_1 \simeq D_1$ so we can obviously complete the square. Then the composition $ \tilde{u}_1 :=  d_1^{D_{\bullet}} \circ u_1$  satisfies $\tilde{u}_1 \circ d_0^{C_{\bullet}} = 0$. Moreover the codomain  of $\tilde{u}_1$ is $D_2$  whose indecomposable summands are all of the form $\mu_Z(D_0)$ with $Z \subset \mathrm{Supp}(D_0) = \mathrm{Supp}(C_0)$ (by strong exactness of $D_{\bullet}$. Therefore by exactness of $C_{\bullet}$, $\tilde{u}_1$ factors through $d_1^{C_{\bullet}}$ which yields a map $u_2 : C_2 \rightarrow D_2$ making the obvious square commute. Continuing this way we obtain a morphism $u_{\bullet} : C_{\bullet} \rightarrow D_{\bullet}$. The same arguments starting with an isomorphism $v_0 : D_0 \rightarrow C_0$ yields a morphism $v_{\bullet} : D_{\bullet} \rightarrow C_{\bullet}$ and we immediately have that $u_{\bullet} v_{\bullet} = \mathrm{id}_{D_{\bullet}}$ and $v_{\bullet} u_{\bullet} = \mathrm{id}_{C_{\bullet}}$ which proves the lemma.
 \end{proof}

 \subsection{Initial chain complexes}
    \label{sec : initial complexes}

 For every $i \in I$ we consider the following indecomposable objects in $\HQ^{(1)}$:
  \begin{equation} \label{eq : def of initial objects}
     H_i := Y(x_i) \qquad K_i := Y(\tau x_i) \otimes_{\HQ^{(1)}} Y(x_i) \qquad F_i := F(\tau x_i) .
  \end{equation}
  We then consider a certain collection of chain complexes in $\KQ^{(1)}$ that are non-trivial in only one homological degree. We will sometimes refer to these complexes as initial complexes. They can be grouped into three families each of cardinality $n = \sharp I$:
 \begin{itemize}
     \item For every $i \in I$, the chain complex $\mathbf{H}_i$ whose only non zero object lies in homological degree zero and is given by $Y(x_i)$,
     \item For every $i \in I$, the chain complex $\mathbf{K}_i$ whose only non zero object lies in homological degree zero and is given by $K_i$,
     \item  For every $i \in I$, the chain complex $\mathbf{F}_i$ whose only non zero object lies in homological degree one and is given by $F_i$.
 \end{itemize}

 In particular, the chain complexes belonging to the first two families are strongly $\HQ^{(1)}$-exact. We now show that the notions of exactness introduced in the previous subsection are preserved under tensor product by the chain complexes $\mathbf{K}_i$.

  \begin{lem} \label{lem : product by frozen still exact}
    Assume $C_{\bullet}$ is $\HQ^{(1)}$-exact (resp. strongly $\HQ^{(1)}$-exact). Then for any $i \in I$, $\mathbf{K}_i \otimes C_{\bullet}$ is  $\HQ^{(1)}$-exact (resp. strongly $\HQ^{(1)}$-exact).   
  \end{lem}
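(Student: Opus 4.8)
The plan is to argue that tensoring with $\mathbf{K}_i$ is an "innocuous" operation at the level of all the combinatorial data controlling $\HQ^{(1)}$-exactness. First I would record the effect of $\mathbf{K}_i \otimes -$ on the object $C_0$: since $\mathbf{K}_i$ is concentrated in degree zero with object $K_i = Y(\tau x_i) \otimes Y(x_i)$, we have $(\mathbf{K}_i \otimes C_{\bullet})_n = K_i \otimes C_n$, so the new zeroth object is $K_i \otimes C_0$, which is again indecomposable dominant (a tensor product of hammock objects), and $(\mathbf{K}_i \otimes C_{\bullet})_n = 0$ for $n<0$. The first bullet of Definition~\ref{def : CQ1 exact chain complexes} is therefore immediate, and the differentials of $\mathbf{K}_i \otimes C_{\bullet}$ are exactly $\mathrm{id}_{K_i} \otimes d_n^{C_{\bullet}}$; in particular, if $C_{\bullet}$ is strongly exact, every non-trivial component of $\mathrm{id}_{K_i} \otimes d_n$ is still of the form $\eta_j \otimes \mathrm{id}$ for some $j \in \mathrm{Supp}(C_0)$, and we will need $\mathrm{Supp}(K_i \otimes C_0) \supseteq \mathrm{Supp}(C_0)$ for the strong-exactness clause to make sense; this I would check directly from the inductive definition of the $a_j$'s, using that in $K_i$ the contributions $c_i$ and $d_i$ are both incremented by one, so the quantities $\max(0, c_j - d_j + \sum a_k)$ are unchanged for $j \neq i$ and at $j=i$ the change is $\max(0, (c_i+1)-(d_i+1)+\sum_{i\to k} a_k) = a_i$; hence $\bomega(K_i \otimes C_0) = \bomega(C_0)$ and $\mathrm{Supp}(K_i \otimes C_0) = \mathrm{Supp}(C_0)$.

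The heart of the matter is the second bullet: given $n>0$, a finite $Z \subset \ZQz$ with $Z \cap \mathrm{Supp}(K_i \otimes C_0) \neq \emptyset$, and $f : K_i \otimes C_n \to \mu_Z(K_i \otimes C_0)$ with $f \circ (\mathrm{id}_{K_i} \otimes d_{n-1}^{C_{\bullet}}) = 0$, I must show $f$ factors through $\mathrm{id}_{K_i} \otimes d_n^{C_{\bullet}}$. The key structural input is that a morphism out of a tensor product $K_i \otimes C_n$ in $\HQ^{(1)}$ decomposes according to how it acts on the two tensor factors: by the definition of morphisms in $\MQ$ (Equation~\eqref{eq : def of morphisms in CQ}) and the monoidality of the construction, any $f$ can be written (after decomposing the target $\mu_Z(K_i\otimes C_0)$ as a Serre tilting splitting the tilting set $Z$ into a part $Z'$ living in the $K_i$-factor and a part $Z''$ living in the $C_n$-factor) as a sum of terms $g \otimes h$ with $g$ a morphism out of $K_i$ and $h$ a morphism out of $C_n$. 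Here I would use that $Z \cap \mathrm{Supp}(K_i\otimes C_0) = Z \cap \mathrm{Supp}(C_0)$ by the first paragraph, and crucially that $\mathrm{Supp}(C_0) \cap (\text{the set of indices appearing in the }K_i\text{-factor})$ has nothing to contribute — more precisely, that the non-trivial Serre tiltings relevant to the exactness condition can only occur in the $C_n$-factor, because the $K_i$-factor $Y(\tau x_i)\otimes Y(x_i)$ behaves like the frozen variable of Remark~\ref{rk : frozens in abelian monoidal categorifications} and admits no non-trivial Serre tilting morphisms that alter $\mathrm{Supp}$. Thus $g$ must be (a scalar multiple of) $\mathrm{id}_{K_i}$, and writing $f = \mathrm{id}_{K_i} \otimes \tilde f$ for $\tilde f : C_n \to \mu_{Z}(C_0)$, the vanishing $f \circ (\mathrm{id}_{K_i}\otimes d_{n-1}) = 0$ forces $\tilde f \circ d_{n-1}^{C_{\bullet}} = 0$ (since $\mathrm{id}_{K_i}\otimes -$ is faithful on the relevant Hom-spaces — tensoring with a nonzero object over $\mathbf k$ does not kill morphisms), so by $\HQ^{(1)}$-exactness of $C_{\bullet}$, $\tilde f$ factors as $\tilde f = \tilde f' \circ d_n^{C_{\bullet}}$, and then $f = (\mathrm{id}_{K_i}\otimes \tilde f') \circ (\mathrm{id}_{K_i}\otimes d_n^{C_{\bullet}})$, as desired.

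I expect the main obstacle to be making the "decomposition of $f$ according to tensor factors" step rigorous: one has to carefully track that a Serre tilting of $K_i \otimes C_0$ splits as a tilting of $K_i$ tensored with a tilting of $C_0$ (which follows from the additivity $\mu_{Z_1 \sqcup Z_2}(X_1 \sqcup X_2) = \mu_{Z_1}(X_1)\otimes \mu_{Z_2}(X_2)$ already noted after Equation~\eqref{eq : monoidal structure on CQ}), and that the combinatorial support condition $Z \cap \mathrm{Supp}(K_i \otimes C_0) \neq \emptyset$ is genuinely a condition on the $C_0$-factor only — i.e. that no index in $\mathrm{Supp}$ is "used up" by the $K_i$ factor. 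Once that bookkeeping is in place, the faithfulness of $\mathrm{id}_{K_i} \otimes -$ and the reduction to exactness of $C_{\bullet}$ are routine. For the strong version, I additionally observe that $\mathrm{id}_{K_i}\otimes(\eta_j \otimes \mathrm{id})$ is again of the form $\eta_j \otimes \mathrm{id}'$ (absorbing $\mathrm{id}_{K_i}$ into the trailing identity tensor factor), and $j \in \mathrm{Supp}(C_0) = \mathrm{Supp}(K_i\otimes C_0)$, so the strong-exactness clause is preserved.
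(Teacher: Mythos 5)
Your proposal follows essentially the same route as the paper's proof: verify the first condition of Definition~\ref{def : CQ1 exact chain complexes} directly, observe that $\bomega(K_i\otimes C_0)=\bomega(C_0)$ (your explicit computation via the differences $c_j-d_j$ usefully expands the paper's one-line assertion), decompose a morphism $f:K_i\otimes C_n\to\mu_Z(K_i\otimes C_0)$ as $g\otimes h$ along the two tensor factors with $Z=Z'\sqcup Z''$, and reduce to the $\HQ^{(1)}$-exactness of $C_\bullet$ applied to $h$. The one point where you genuinely diverge is your claim that $g$ is forced to be a scalar multiple of $\mathrm{id}_{K_i}$ because $K_i$ ``admits no non-trivial Serre tilting morphisms.'' This is not justified and is false as stated: the quasi-additive function $h$ of $K_i=Y(\tau x_i)\otimes Y(x_i)$ satisfies $\widetilde{h}(\tau x_i)=1>0$, so $\mu_iK_i$ is a legitimate Serre tilting and $\mathrm{Hom}_{\HQ^{(1)}}(K_i,\mu_iK_i)\neq 0$; hence nonidentity components $g:K_i\to\mu_{Z'}K_i$ with $Z'\neq\emptyset$ can occur. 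The paper sidesteps this entirely by allowing an arbitrary such $g$: once $h=h'\circ d_n^{C_\bullet}$, one still gets $f=(g\otimes h')\circ(\mathrm{id}_{K_i}\otimes d_n^{C_\bullet})$, which is all that is needed. You should therefore drop the claim about $g$; the factorization goes through unchanged. (Both your argument and the paper's tacitly need that the portion $Z''$ of the tilting set acting on the $C_0$-factor still meets $\mathrm{Supp}(C_0)$ in order to invoke the exactness of $C_\bullet$ on $h$; your claim about $g$ was evidently an attempt to secure this, but the cleaner fix is simply to choose the splitting $Z=Z'\sqcup Z''$ so that the vertices $\tau x_j$ with $j\in\mathrm{Supp}(C_0)$ are placed in $Z''$.) Your treatment of the first condition and of the strong-exactness clause matches the paper's.
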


   \begin{proof}
    By definition, $\mathbf{K}_i$ is non-trivial only in degree $0$, so that $(\mathbf{K}_i \otimes C_{\bullet})_n = (\mathbf{K}_i)_0 \otimes C_n = K_i \otimes C_n$ for every $n \geq 0$. In particular the first condition of Definition~\ref{def : CQ1 exact chain complexes} is satisfied. Moreover, the differentials of $\mathbf{K}_i \otimes C_{\bullet}$ are given by $d_n^{\mathbf{K}_i \otimes C_{\bullet}   }  = \mathrm{id}_{K_i} \otimes d_n^{C_{\bullet}}$. 

     It follows from the definition of $\bomega$ that $\bomega(K_i \otimes C_0) = \bomega(C_0)$ so that $\mathrm{Supp}(K_i \otimes C_0) = \mathrm{Supp}(C_0)$. Let us fix $n>0$ and let $Z$ be a finite subset of $\ZQz$ such that $Z \cap \mathrm{Supp}(C_0)  \neq \emptyset$. Let $f : K_i \otimes C_n \rightarrow \mu_Z(K_i \otimes C_0)$ be a non-trivial morphism and assume  that $f \circ (\mathrm{id}_{K_i} \otimes d_{n-1}^{C_{\bullet}} = 0$. This implies that (up to composing with an isomorphism) $f$ can be written under the form $f = g \otimes h$ where $g : K_i \rightarrow \mu_{Z'}K_i$ and $h : C_n \rightarrow \mu_{Z''}C_0$ with $Z' \sqcup Z'' = Z$ and such that $h \circ d_{n-1}^{C_{\bullet}} = 0$. As $C_{\bullet}$ is $:CQ^{(1)}$-exact, we get that $h = h' \circ d_n^{C_{\bullet}}$ and hence we obtain 
     $$ f =  g \otimes (h' \circ d_n^{C_{\bullet}}) = (g \otimes h') \circ (\mathrm{id}_{K_i} \otimes d_n^{C_{\bullet}}) = (g \otimes h') \circ d_n^{\mathbf{K}_i \otimes C_{\bullet}} .   $$
     This shows that $\mathbf{K}_i \otimes C_{\bullet}$ is $\HQ^{(1)}$-exact. The strong exactness is obvious. 
   \end{proof}

\subsection{Leading  objects}
 \label{sec : (co)leading objects}

 Given an element $\beta := \sum_{i \in I} a_i \alpha_i$ in the positive orthant of the root lattice of $\mathfrak{g}$, we define integers $b_i, c_i, d_i$ for each $i \in I$ as follows: 
    $$ \forall i \in I, \enspace  b_i := a_i - \sum_{i \rightarrow j} a_j \quad c_i := (b_i)_+ \quad d_i := (b_i)_-  $$
    Here we are using the standard notations $b_+ := \max(0,b) , b_- := - \min(0,b)$ for any integer $b$. In particular we have that $c_i \geq 0, d_i \geq 0, c_id_i = 0$ and $c_i - d_i = b_i$  for every $i \in I$.
    We then define the following indecomposable dominant object in $\HQ^{(1)}$:
    \begin{equation} \label{eq : def of Ybeta leading}
        \Ybeta := \bigotimes_{i \in I} Y(\tau x_i)^{\otimes c_i} \otimes Y(x_i)^{\otimes d_i} . 
    \end{equation}
   
 We can already make the following observation. 

  \begin{lem} \label{lem : support}
    For every $\beta := \sum_{i \in I} a_i \alpha_i$ in the positive orthant of the root lattice of $\mathfrak{g}$, we have that  $\bomega(Y[\beta]) = \beta$. 
  \end{lem}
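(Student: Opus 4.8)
The plan is to unwind the two definitions involved and then close by an easy induction on the integers $r_i$ of~\eqref{eq : def of ri}. Write $\beta = \sum_{i \in I} a_i \alpha_i$ with every $a_i \geq 0$, and let $b_i, c_i, d_i$ be the integers attached to $\beta$ in Section~\ref{sec : (co)leading objects}, so that $Y[\beta]$ is the dominant object of~\eqref{eq : def of Ybeta leading}. Feeding this object into the definition of $\bomega$ produces a family $(a_i')_{i \in I}$ of non-negative integers; the content of the lemma is exactly that $a_i' = a_i$ for all $i$, which then gives $\bomega(Y[\beta]) = \sum_i a_i' \alpha_i = \beta$.

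First I would record the elementary identity $c_i - d_i = b_i = a_i - \sum_{i \rightarrow j} a_j$, which is immediate from $c_i = (b_i)_+$ and $d_i = (b_i)_-$. Substituting it into the recursive definition of $\bomega$ shows that the family $(a_i')$ is characterized by $a_i' = \max\bigl(0,\, a_i - \sum_{i \rightarrow j} a_j\bigr)$ when $i$ is a sink of $Q$, and $a_i' = \max\bigl(0,\, a_i - \sum_{i \rightarrow j} a_j + \sum_{i \rightarrow j} a_j'\bigr)$ otherwise.

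Then I would induct on $r_i$. If $r_i = 0$, i.e.\ $i$ is a sink, there are no arrows $i \rightarrow j$, so the empty sum drops out and $a_i' = \max(0, a_i) = a_i$ because $a_i \geq 0$. For the inductive step, assume $a_j' = a_j$ for every $j$ with $r_j < r_i$; since $r_j < r_i$ whenever there is an arrow $i \rightarrow j$ in $Q$ (as already noted in Section~\ref{sec : (co)leading objects} when checking that $\bomega$ is well-defined), this applies to all such $j$, so $\sum_{i \rightarrow j} a_j' = \sum_{i \rightarrow j} a_j$ and the second relation collapses to $a_i' = \max(0, a_i) = a_i$. This completes the induction, and hence the proof.

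I do not expect any genuine obstacle here: the statement is a consistency check asserting that $\beta \mapsto Y[\beta]$ is a section of $\bomega$ over the positive orthant. The only point requiring a little care is the bookkeeping that keeps the prescribed coefficients $a_i$ of $\beta$ separate from the coefficients $a_i'$ output by the $\bomega$-recursion, together with the observation that $a_i \geq 0$ renders every truncation $\max(0,-)$ inert.
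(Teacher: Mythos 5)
Your proof is correct and follows essentially the same route as the paper: unwind $c_i - d_i = b_i = a_i - \sum_{i \rightarrow j} a_j$ and induct on $r_i$, using $a_i \geq 0$ to remove the truncation $\max(0,\cdot)$ at each step. The only cosmetic difference is that you keep the $\max(0,\cdot)$ explicit in the sink case where the paper writes $a_i' = c_i - d_i$ directly, which is if anything slightly more careful.
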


   \begin{proof}
       Denote $\bomega(Y[\beta]) := \sum_i a'_i \alpha_i$. We show that $a'_i = a_i$ for all $i \in I$ by induction on $r_i$ as defined by~\eqref{eq : def of ri}. If $i \in I$ is such that $r_i=0$, i.e. $i$ is a sink in $Q$, then by definition of $\bomega$ we have that $a'_i = c_i-d_i = (b_i)_+ - (b_i)_- = b_i = a_i$ (as the sum over $j$ such that $i \rightarrow j$ is empty given that $i$ is a sink in $Q$). Let $i \in I$ and assume that $a'_j=a_j$ has been proved for all $j \in I$ such that $r_j<r_i$. Then by definition of $\bomega$ we have 
    \begin{align*}
        a'_i &:= \max \left( 0, c_i-d_i + \sum_{i \rightarrow j} a'_j \right) = \max \left( 0 , b_i + \sum_{i \rightarrow j} a'_j \right)  \\
        &= \max \left( 0 , b_i + \sum_{i \rightarrow j} a_j \right) \quad \text{by induction assumption, as $r_j<r_i$ if $i \rightarrow j$} \\
        &= \max(0,a_i)  \quad \text{by definition of $b_i$} \\
        &= a_i
    \end{align*}
    as $\beta$ belongs to the positive orthant of the root lattice. The lemma is proved. 
   \end{proof}

    From this lemma, we can derive the following useful observation. 

     \begin{cor} \label{cor : dominant object as leading object}
     Let $(X,h)$ be an indecomposable  dominant object in $\HQ^{(1)}$ and let $\beta := \bomega(X,h)$. Then we have 
     $$ (X,h) \enspace \simeq \enspace \bigotimes_{i \in I} K_i^{\otimes \min( \widetilde{h}(\tau x_i) , \widetilde{h}(x_i) )} \otimes  Y[\beta] .  $$
     \end{cor}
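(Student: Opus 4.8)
The plan is to unwind the definitions and then apply Lemma~\ref{lem : support} together with Remark~\ref{rk on dominant objects}. First I would write $(X,h)$ in its unique (up to isomorphism) form as a tensor product of hammock objects, say
$$ (X,h) \simeq \bigotimes_{i \in I} Y(\tau x_i)^{\otimes \gamma_i} \otimes Y(x_i)^{\otimes \delta_i} $$
for some non-negative integers $\gamma_i, \delta_i$. Since $\widetilde{h_{\tau x_i}} = \delta_{\tau x_i}$ and $\widetilde{h_{x_i}} = \delta_{x_i}$ (and these functions are supported away from each other on the relevant vertices), applying the operator $\widetilde{\ \cdot\ }$ to $h = \sum_i \gamma_i h_{\tau x_i} + \delta_i h_{x_i}$ shows that $\widetilde{h}(\tau x_i) = \gamma_i$ and $\widetilde{h}(x_i) = \delta_i$ for every $i \in I$. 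In particular $\min(\widetilde{h}(\tau x_i), \widetilde{h}(x_i)) = \min(\gamma_i, \delta_i)$.

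Next I would set $m_i := \min(\gamma_i, \delta_i)$ and observe that the stated right-hand side equals
$$ \bigotimes_{i \in I} K_i^{\otimes m_i} \otimes Y[\beta] = \bigotimes_{i \in I} Y(\tau x_i)^{\otimes m_i} \otimes Y(x_i)^{\otimes m_i} \otimes \bigotimes_{i \in I} Y(\tau x_i)^{\otimes c_i} \otimes Y(x_i)^{\otimes d_i}, $$
where $c_i, d_i$ are the integers attached to $\beta$ in Section~\ref{sec : (co)leading objects}, namely $c_i = (b_i)_+$, $d_i = (b_i)_-$ with $b_i = a_i - \sum_{i \to j} a_j$ and $\beta = \sum_i a_i \alpha_i$. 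Both sides are dominant objects, so by Remark~\ref{rk on dominant objects} it suffices to check equality of the underlying quasi-additive functions, which (by linear independence of the $h_x$, Lemma~\ref{lem : linear independece of hammock functions}) amounts to the exponent identities $\gamma_i = m_i + c_i$ and $\delta_i = m_i + d_i$ for all $i \in I$. Since $c_i d_i = 0$, exactly one of $c_i, d_i$ is zero, and $\gamma_i - \delta_i = c_i - d_i = b_i$ will give the match once we know $b_i = \gamma_i - \delta_i$; indeed $m_i + c_i - (m_i + d_i) = c_i - d_i = b_i$, and $\min(m_i + c_i, m_i + d_i) = m_i = \min(\gamma_i,\delta_i)$, so the pair $(m_i + c_i, m_i + d_i)$ is the unique pair of non-negative integers with difference $b_i$ and minimum $m_i$, hence equals $(\gamma_i, \delta_i)$.

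It remains to verify $b_i = \gamma_i - \delta_i$, equivalently that the integers $c_i - d_i$ associated to $\beta = \bomega(X,h)$ recover $\gamma_i - \delta_i$. This is precisely a reformulation of Lemma~\ref{lem : support} (and the recursive definition of $\bomega$): writing $a_i$ for the coefficients of $\bomega(X,h)$, the construction of $\bomega$ gives $a_i = \max(0, (\gamma_i - \delta_i) + \sum_{i \to j} a_j)$ by induction on $r_i$, and Lemma~\ref{lem : support} applied to $Y[\beta]$ (which has the same $\bomega$ by construction) shows the inversion $\gamma_i - \delta_i = a_i - \sum_{i \to j} a_j = b_i$ holds whenever $(X,h)$ is built so that at least one of $\gamma_i, \delta_i$ is cancelled — but in general one only gets $\gamma_i - \delta_i = b_i$ after subtracting the common part $m_i$. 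The cleanest route is therefore: by induction on $r_i$ show directly that $\gamma_i - \delta_i = b_i$, using that $\bomega$ only depends on the differences $\gamma_i - \delta_i$ and that $\bomega(X,h) = \bomega(\bigotimes_i K_i^{\otimes m_i} \otimes Y[\beta]) = \beta$ by additivity of $\bomega$ under tensoring with the $K_i$ (as recorded inside the proof of Lemma~\ref{lem : product by frozen still exact}). The main obstacle is bookkeeping: making sure the induction on $r_i$ is set up so that the $\max(0, -)$ truncations in the definition of $\bomega$ do not obstruct recovering the $b_i$, which is exactly why one must strip off the $K_i$-factors $m_i$ first rather than arguing about $\gamma_i, \delta_i$ directly.
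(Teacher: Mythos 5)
Your overall route is the same as the paper's: decompose $(X,h)$ into hammock factors, identify the exponents with $\widetilde{h}(\tau x_i)$ and $\widetilde{h}(x_i)$, strip off the $K_i$-factors, and match what remains with $Y[\beta]$ by comparing exponents (equivalently, quasi-additive functions, via Remark~\ref{rk on dominant objects} and Lemma~\ref{lem : linear independece of hammock functions}). Your steps up to and including the reduction to the single identity $\gamma_i-\delta_i=b_i$ are correct and agree with the paper's proof, which reduces in exactly the same way to the claim that the reduced object $(X',h')$ with $c'_id'_i=0$ and $\bomega(X',h')=\beta$ must be isomorphic to $Y[\beta]$.

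The genuine gap is the final step, which you yourself flag as ``the main obstacle'' and never close: you do not prove $\gamma_i-\delta_i=b_i$. (Your remark that one ``only gets $\gamma_i-\delta_i=b_i$ after subtracting the common part $m_i$'' is not meaningful, since subtracting $m_i$ from both exponents does not change their difference.) Unwinding the recursion defining $\bomega$ gives $b_i=a_i-\sum_{i\to j}a_j=\max\bigl(-\sum_{i\to j}a_j,\ \gamma_i-\delta_i\bigr)$, so the identity you need holds if and only if $\gamma_i-\delta_i+\sum_{i\to j}a_j\ge 0$ for every $i$, i.e.\ if no truncation occurs in the recursion. This genuinely fails for some indecomposable dominant objects: for $Q\colon 1\to 2$ and $(X,h)=Y(x_1)$ one has $\gamma_1=0$, $\delta_1=1$, hence $\beta=\bomega(X,h)=0$ and $\min(\widetilde{h}(\tau x_1),\widetilde{h}(x_1))=0$, so the asserted isomorphism would identify $Y(x_1)$ with the unit object. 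To be fair, the paper's own proof asserts the corresponding uniqueness (``the conditions $c'_id'_i=0$ determine the integers $c_i,d_i$ uniquely for a given $\bomega$'') with no more justification, and the statement is only ever applied to objects for which the no-truncation condition is satisfied; but as written your argument, like the paper's, does not establish the corollary for an arbitrary indecomposable dominant object. To repair it you must either add the hypothesis $\widetilde{h}(\tau x_i)-\widetilde{h}(x_i)+\sum_{i\to j}a_j\ge 0$ for all $i$, or restrict to the objects actually arising in Lemma~\ref{lem : isom between leading objects} and Proposition~\ref{prop : Serre tilting  of leading object}, and verify the condition there before running your induction on $r_i$.
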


\begin{proof}
    As $(X,h)$ is indecomposable dominant we can write it as 
    $$ (X,h) \simeq \bigotimes_{i \in I} Y(\tau x_i)^{\otimes c_i} \otimes Y(x_i)^{\otimes d_i} $$
    with $c_i,d_i \geq 0$ for each $i \in I$. Moreover, we actually have $c_i = \widetilde{h}(\tau x_i)$ and $d_i = \widetilde{h}(x_i)$ for each $i \in I$. So we can write 
    $$ (X,h) \simeq  \bigotimes_{i \in I} K_i^{\otimes \min( \widetilde{h}(\tau x_i) , \widetilde{h}(x_i) )} \otimes  (X',h')  $$
    where denoting $c'_i = \widetilde{h'}(\tau x_i)$ and $d'_i := \widetilde{h'}(x_i)$ we have $c'_i-d'_i = c_i-d_i$ and  $c'_id'_i=0$, for every $i \in I$. As the definition of $\bomega$ depends only on the integers $c_k-d_k$, we get $ \bomega (X',h') = \beta = \bomega(\Ybeta)$ by Lemma~\ref{lem : support} which implies that $(X',h') \simeq \Ybeta$ because the conditions $c'_id'_i=0$ determine the integers $c_i,d_i$ uniquely for a given $\bomega$. 
 \end{proof}

 \begin{rk} \label{rk : support of Ybeta}
     Recalling the notations $\mathrm{Supp}$ from Section~\ref{sec : exactness}, it follows from Lemma~\ref{lem : support} that $\mathrm{Supp}(Y[\beta]) = \{i \in I , a_i \neq 0\}$ if $\beta := \sum_i a_i \alpha_i $ (which was the motivation for the definition of $\bomega(X)$ and $\mathrm{Supp}(X)$ in Section~\ref{sec : exactness}).
 \end{rk}

   \subsection{Serre tiltings of leading objects}
    \label{sec : morphisms between objects}

Let $Q$ be a finite quiver. We assume that for any vertices $i,j$ of $Q$, there is at most one oriented path from $i$ to $j$ (this is the case for instance if $Q$ is a Dynkin quiver). Given an element $\beta$ of the positive orthant of the root lattice, we denote by $\mathrm{Supp}(\beta) := \{ i \in I \mid a_i>0 \}$ and by $\Qbeta$ the subquiver of $Q$ having $\mathrm{Supp}(\beta)$ as set of vertices. Let $P_i^{\beta}$ (resp. $I_i^{\beta}$) denote the   the  indecomposable projective (resp. injective) representation of $\Qbeta$ at the vertex $i$. Recall that their respective dimension vectors are given by 
$$ \boldsymbol{\dim} P_i^{\beta} = \sum_{ j \in \mathrm{out}_{\Qbeta}(i)} \alpha_j  \qquad 
  \boldsymbol{\dim} I_i^{\beta} = \sum_{ j \in \mathrm{in}_{\Qbeta}(i)} \alpha_j . $$
 where for each $i \in (\Qbeta)_0$:
$$ \mathrm{out}_{\Qbeta}(i) := \{ j \in (\Qbeta)_0 \mid i \rightsquigarrow j \}  \qquad \mathfrak{in}_{\Qbeta}(i) := \{ j \in (\Qbeta)_0 \mid j \rightsquigarrow i \} .  $$
The ultimate goal of this section will be to construct morphisms of chain complexes between various $\HQ^{(1)}$-exact complexes in $\KQ^{(1)}$. The first step to achieve this consists in constructing morphisms between (Serre tiltings of) leading objects in $\HQ^{(1)}$, which is the aim of the next few statements. 

 \begin{lem} \label{lem : isom between leading objects}
     We have that 
     $$ \Ybeta \otimes Y(x_i) \simeq  K_i^{\otimes \epsilon_i} \otimes  Y[\beta - \boldsymbol{\dim}I_i^{\beta}] $$
     where $\epsilon_i$ is equal to  $1$ if $c_i>0$ and to zero otherwise. 
 \end{lem}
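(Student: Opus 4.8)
The plan is to use that both $\Ybeta \otimes Y(x_i)$ and $K_i^{\otimes \epsilon_i} \otimes Y[\gamma]$ (writing $\gamma := \beta - \boldsymbol{\dim}I_i^\beta$) are dominant objects of $\HQ^{(1)}$, so that by Remark~\ref{rk on dominant objects} it suffices to show their associated quasi-additive functions coincide. Denote by $h[\beta] := \sum_{k \in I}(c_k h_{\tau x_k} + d_k h_{x_k})$ the function attached to $\Ybeta$, and by $h[\gamma]$ the one attached to $Y[\gamma]$, built in the same way from the integers $b_k(\gamma),c_k(\gamma),d_k(\gamma)$ of $\gamma$. Then the left-hand side carries the function $h[\beta] + h_{x_i}$ and the right-hand side carries $\epsilon_i(h_{\tau x_i} + h_{x_i}) + h[\gamma]$. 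By Lemma~\ref{lem : linear independece of hammock functions} the functions $\{h_x\}_{x \in \ZQz}$ are linearly independent, and none of the vertices $x_k$ is of the form $\tau x_l$; so matching the two functions amounts to the system $c_k = c_k(\gamma) + \epsilon_i\delta_{ik}$ and $d_k + \delta_{ik} = d_k(\gamma) + \epsilon_i\delta_{ik}$ for every $k \in I$. One should also observe that $\gamma$ lies in the positive orthant, so that $Y[\gamma]$ is defined; this falls out of the computation below.

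Subtracting the two equations and using $c_k - d_k = b_k$, $c_k(\gamma) - d_k(\gamma) = b_k(\gamma)$, the whole system becomes equivalent to the single assertion $b_k(\gamma) = b_k - \delta_{ik}$ for all $k$, together with compatibility of signs — automatic for $k \neq i$ since $c_k,d_k$ are the positive and negative parts of $b_k$, and for $k = i$ a one-line split on whether $b_i > 0$ (i.e. whether $\epsilon_i = 1$) recovers it from $b_i(\gamma) = b_i - 1$. To prove $b_k(\gamma) = b_k - \delta_{ik}$, I would compute $b_k(\gamma)$ directly: writing $\gamma = \sum_k a_k(\gamma)\alpha_k$ with $a_k(\gamma) = a_k - (\boldsymbol{\dim}I_i^\beta)_k$, one gets
$$ b_k(\gamma) = a_k(\gamma) - \sum_{k \to j} a_j(\gamma) = b_k - \Big( (\boldsymbol{\dim}I_i^\beta)_k - \sum_{k \to j}(\boldsymbol{\dim}I_i^\beta)_j \Big), $$
so everything reduces to the dimension-vector identity $(\boldsymbol{\dim}I_i^\beta)_k - \sum_{k \to j}(\boldsymbol{\dim}I_i^\beta)_j = \delta_{ik}$.

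This identity reflects the standard description of the dimension vector of an indecomposable injective: $(\boldsymbol{\dim}I_i^\beta)_k$ counts the oriented paths from $k$ to $i$, which under the standing hypothesis that $Q$ has at most one oriented path between any two vertices is $0$ or $1$. For $k = i$ it is $1$, and acyclicity forbids any arrow out of $i$ from starting a path back to $i$, giving $\delta_{ii}=1$. For $k \neq i$ that reaches $i$, uniqueness of the path from $k$ to $i$ pins down a single arrow $k \to j$ along which it leaves $k$ — and conversely any arrow $k \to j$ with $j$ reaching $i$ extends to such a path — so the sum equals $1$ and the difference is $0$. If $k$ cannot reach $i$, then no arrow out of $k$ reaches $i$ either, and both terms vanish. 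Feeding this back yields $b_k(\gamma) = b_k - \delta_{ik}$, hence equality of the two quasi-additive functions and, by Remark~\ref{rk on dominant objects}, the desired isomorphism.

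The only genuinely non-formal step is this dimension-vector identity, and within it the delicate point is its behaviour at vertices $k$ adjacent to but lying outside $\mathrm{Supp}(\beta)$, where one must control which paths to $i$ stay inside the relevant subquiver; this is precisely where the uniqueness-of-paths hypothesis (equivalently, that $\mathrm{in}_{\Qbeta}(i)$ behaves well under taking $Q$-predecessors for the $\beta$ at hand) is what forces the identity to hold. Everything else — the reduction to quasi-additive functions via dominance, the linear independence of the $h_x$, and the bookkeeping with positive and negative parts at the vertex $i$ — is routine.
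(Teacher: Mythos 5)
Your reduction to matching quasi-additive functions is clean, and the bookkeeping with positive and negative parts at the vertex $i$ is fine, but the dimension-vector identity on which everything rests is false, and it fails exactly at the ``delicate'' vertices you flagged and then dismissed. The point is that $I_i^{\beta}$ is the injective representation of the subquiver $\Qbeta$, so $\boldsymbol{\dim}I_i^{\beta}$ is supported on $\mathrm{Supp}(\beta)$: for $k \notin \mathrm{Supp}(\beta)$ one has $(\boldsymbol{\dim}I_i^{\beta})_k = 0$, while $k$ may well carry an arrow $k \to j$ with $j \in \mathrm{in}_{\Qbeta}(i)$. At such a vertex your quantity $(\boldsymbol{\dim}I_i^{\beta})_k - \sum_{k\to j}(\boldsymbol{\dim}I_i^{\beta})_j$ equals $-1$, not $\delta_{ik}$. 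Concretely, take $Q \colon 1 \to 2 \to 3$, $\beta = \alpha_2+\alpha_3$, $i=3$, $k=1$: then $\boldsymbol{\dim}I_3^{\beta} = \alpha_2+\alpha_3$ and the left-hand side is $0-1=-1$. Consequently $b_1(\gamma) = b_1+1$ rather than $b_1$, so $d_1 = 1$ on the left and $d_1(\gamma)=0$ on the right: the two quasi-additive functions differ by $h_{x_1}$, and linear independence of the $h_x$ then works against you rather than for you. (Your path-counting verification implicitly treats $(\boldsymbol{\dim}I_i^{\beta})_k$ as counting paths from $k$ to $i$ in $Q$; that is the dimension vector of the injective of $Q$, not of $\Qbeta$, and the two differ precisely at predecessors of $\mathrm{Supp}(\beta)$ lying outside it.)

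The paper's proof takes a different and, at this point, essential detour: it does not compare the raw exponents $(c_k,d_k)$ but computes the rectified weight $\bomega(\Ybeta \otimes Y(x_i))$, whose recursive definition $a_k = \max\left(0, c_k-d_k+\sum_{k\to j}a_j\right)$ truncates at zero. At a vertex $k\notin\mathrm{Supp}(\beta)$ the quantity being maximized is already nonpositive, so the extra $-1$ you pick up there is absorbed by the $\max(0,\cdot)$, and one still gets $\bomega(\Ybeta\otimes Y(x_i)) = \beta - \boldsymbol{\dim}I_i^{\beta}$; the isomorphism is then deduced from Corollary~\ref{cor : dominant object as leading object} rather than from linear independence of the $h_x$. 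Your computation is not wasted, though: it shows that the two objects in the lemma can carry genuinely different hammock factors at vertices just outside $\mathrm{Supp}(\beta)$ (the factor $Y(x_1)$ in the example above), so the discrepancy does not vanish but is merely pushed into the step where Corollary~\ref{cor : dominant object as leading object} asserts that $\bomega$ together with the conditions $c_kd_k=0$ determines a dominant object --- a point worth scrutinizing, since $\bomega$ does not see the exponents at vertices where $a_k=0$. To repair your own argument you would at minimum have to prove the statement with the exponents recomputed through $\bomega$, i.e.\ follow the paper's route.
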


  \begin{proof}
    Let us write $\beta = \sum_k a_k \alpha_k$ and
    $$ \Ybeta = \bigotimes_{k \in I} Y(\tau x_k)^{\otimes c_k} \otimes Y(x_k)^{\otimes d_k} .  $$
     We claim  that $\bomega(\Ybeta \otimes Y(x_i))= \beta - \boldsymbol{\dim}I_i^{\beta}$. To check this, let us write  
     $$\bomega(\Ybeta \otimes Y(x_i))  :=  \sum_{k \in I}a'_k \alpha_k  \qquad \Ybeta \otimes Y(x_i) := \bigotimes_{k \in I} Y(\tau x_k)^{\otimes c'_k} \otimes Y(x_k)^{\otimes d'_k} $$
     so we obviously have $c'_k = c_k$ and $d'_k = d_k + \delta_{k,i}$ for each $k \in I$, and let us prove by induction on $r_k$ that $a'_k = a_k - \delta_{k \in \mathrm{in}_{\Qbeta}(i)}$. Assume $r_k=0$ i.e. $k$ is a sink in $Q$. Then 
     $$ a'_k = c'_k-d'_k  = c_k - (d_k + \delta_{k,i}) = a_k - \delta_{k,i} .  $$
     This implies the desired identity in that case. Assume it has been established for all $k \in I$ such that $r_k \leq m$ and let us fix $k \in I$ with $r_k = m+1$. Then $r_j \leq m$ for each $j \in I$ such that $k \rightarrow j$ and therefore we have that 
    \begin{align*}
      a'_k &= \max \left( 0 , c'_k - d'_k + \sum_{ k \rightarrow j} a'_j \right)  \\
      &= \max \left( 0 , c_k - (d_k + \delta_{k,i}) + \sum_{k \rightarrow j} (a_j - \delta_{j \in \mathrm{in}_{\Qbeta}(i)}) \right) \\
      &= \max \left( 0 , c_k-d_k + \sum_{k \rightarrow j}a_j  - \delta_{k \in \mathrm{in}_{\Qbeta}(i)} \right) . 
    \end{align*}
   Indeed, by assumption on $Q$ there is at most one oriented path from $k$ to $i$ and therefore if $k \neq i$ then $k \in \mathrm{in}_{\Qbeta}(i)$ if and only if there is $j \in \mathrm{in}_{\Qbeta}(i)$ such that $k \rightarrow j$ and if so then such $j$ is unique. It remains to note that if  $k \in \mathrm{in}_{\Qbeta}(i)$ then in particular $k \in \mathrm{Supp}(\beta)$ so that $ c_k - d_k + \sum_{k \rightarrow j} a_j = a_k >0$. Hence we have that $a'_k =  a_k - \delta_{k \in \mathrm{in}_{\Qbeta}(i)}$ which was the desired identity. Thus the claim holds.  Finally, we have that $c'_kd'_k = c_kd_k=0$ for all $k \in I$ except for $k=i$ where this the case only if $c_i=0$. If on the contrary $c_i>0$ then necessarily $d_i=0$ so that $d'_i=1$. By Corollary~\ref{cor : dominant object as leading object} we obtain 
   $$ \Ybeta \otimes Y(x_i) \otimes K_i^{\otimes \epsilon_i} \otimes Y[\beta - \boldsymbol{ \dim }I_i^{\beta}] $$
    as desired. 
  \end{proof}

\begin{prop} \label{prop : Serre tilting  of leading object}
   Let $\beta$ be an arbitrary element of the positive orthant of the root lattice and let $i \in \mathrm{Supp}(\beta)$. Let $\mathrm{out}_{\Qbeta}(i) := \{ j \in \mathrm{Supp}(\beta)  \mid i \rightsquigarrow j\}$  and let $Z$ be the finite subset of $\ZQz$ given by $Z := \{ \tau x_j  , j \in \mathrm{out}_{\Qbeta}(i)\}$. Then in $\HQ^{(1)}$, we have an isomorphism 
   $$ \mu_Z \left(  \Ybeta \otimes Y(x_i)  \right) \enspace \simeq \enspace F(i,\beta) \otimes K(i, \beta) \otimes H(i,\beta) \otimes   Y[\beta -  \boldsymbol{\dim}P_i^{\beta}] .  $$
   where 
   $$ F(i,\beta) :=  \bigotimes_{z \in Z} F(z)  \qquad H(i,\beta) :=  \bigotimes_{l \notin \mathrm{Supp}(\beta)}  Y(x_l)^{\otimes  \sharp  j \in \mathrm{out}_{\Qbeta}(i) , j \rightarrow l } $$
   and 
   $K(i, \beta)$ is a product of objects of the form $K_j , j \in I$.
 \end{prop}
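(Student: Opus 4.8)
The plan is to reduce the asserted isomorphism to two combinatorial identities --- one of underlying multisets and one of underlying quasi-additive functions --- and then prove these by induction on $\sharp\,\mathrm{out}_{\Qbeta}(i)$, using the Auslander--Reiten triangles of $Q$ and Lemma~\ref{lem : mutation for functions}. Both sides of the claimed isomorphism are indecomposable objects of $\MQ$ (Corollary~\ref{cor : indecomposable objects in MQ}), so by Lemma~\ref{lem : both ways morphisms} they become isomorphic in $\MQ$ --- and, being then carried by the very same pair $(X,h)$ up to a relabelling of the indexing multiset, already isomorphic in $\HQ^{(1)}$ --- as soon as their underlying multisets coincide and their underlying quasi-additive functions coincide. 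One first checks that $\mu_Z(\Ybeta\otimes Y(x_i))$ is defined, i.e.\ that $\tau x_{j'}$ occurs in the multiset of $\Ybeta\otimes Y(x_i)$ for each $j'\in\mathrm{out}_{\Qbeta}(i)$: descending from $j'$ inside $\mathrm{Supp}(\beta)$ one reaches a vertex $k$ with $c_k>0$ (such a $k$ exists, for otherwise all $a_m$ with $m$ weakly below $i$ would vanish, contradicting $i\in\mathrm{Supp}(\beta)$), and then $\tau x_{j'}\in H(\tau x_k)$ since $j'\rightsquigarrow k$; alternatively this follows from Lemma~\ref{lem : preliminary for def of eta} together with the fact that the quasi-additive function $h$ of $\Ybeta$ satisfies $\widetilde h(\tau x_k)=c_k$.

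For the function identity, expand each $\delta_{\tau x_{j'}}$ by Lemma~\ref{lem : mutation for functions} as $\delta_{\tau x_{j'}}=h_{\tau x_{j'}}+h_{x_{j'}}-\sum_{j'\to l}h_{x_l}-\sum_{l\to j'}h_{\tau x_l}$. Substituting into the function $h+h_{x_i}-\sum_{j'\in\mathrm{out}_{\Qbeta}(i)}\delta_{\tau x_{j'}}$ of the left-hand side and comparing with the function of the right-hand side, both are integral combinations of the linearly independent functions $\{h_{\tau x_k},h_{x_k}\}_{k\in I}$ (Lemma~\ref{lem : linear independece of hammock functions}), and the identity is obtained by matching coefficients, using the recursive definitions of the integers $c_k,d_k$ attached to $\beta$ and to $\beta-\boldsymbol{\dim}P_i^\beta$; this also dictates the precise definition of $K(i,\beta)$ as a product of the $K_j$.

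For the multiset identity I would induct on $\sharp\,\mathrm{out}_{\Qbeta}(i)$. If $\mathrm{out}_{\Qbeta}(i)=\{i\}$ then $c_i=a_i>0$, so $\Ybeta\otimes Y(x_i)$ contains the factor $Y(\tau x_i)\otimes Y(x_i)$; applying once the multiset identity $H(\tau x_i)\sqcup H(x_i)=\{\tau x_i,\Sigma\tau x_i\}\sqcup\bigsqcup_{i\to l}H(x_l)\sqcup\bigsqcup_{l\to i}H(\tau x_l)$ from the proof of Lemma~\ref{lem : mutation for objects in CQ} and then performing the replacement $\tau x_i\mapsto S\tau x_i$ coming from $\mu_i$, the pair $\{\tau x_i,\Sigma\tau x_i\}$ turns into the multiset of $F(\tau x_i)=F(i,\beta)$, the $H(x_l)$ (all such $l$ lying outside $\mathrm{Supp}(\beta)$ here) assemble into $H(i,\beta)$, and the remaining dominant object has $\bomega=\beta-\alpha_i$, hence by Corollary~\ref{cor : dominant object as leading object} equals $K(i,\beta)\otimes Y[\beta-\alpha_i]=K(i,\beta)\otimes Y[\beta-\boldsymbol{\dim}P_i^\beta]$. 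For the inductive step with $c_i>0$, let $j_1,\dots,j_m$ be the $\Qbeta$-children of $i$; one opens up $i$ as above, releasing one copy of $Y(x_l)$ for each out-neighbour $l$ of $i$ in $Q$ (those with $l\notin\mathrm{Supp}(\beta)$ becoming part of $H(i,\beta)$), and then applies the proposition itself --- legitimately, since $\mathrm{out}_{\Qbeta}(j_k)\subsetneq\mathrm{out}_{\Qbeta}(i)$ --- successively to $Y(x_{j_k})\otimes Y[\gamma]$ for $\gamma=\beta-\alpha_i,\ \beta-\alpha_i-\boldsymbol{\dim}P_{j_1}^\beta,\ \dots$. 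Since $Q$ is a tree, the out-cones $\mathrm{out}_{\Qbeta}(j_k)$ are pairwise disjoint and disjoint from the ancestors of $i$, so at every stage $\mathrm{out}_{Q_{\gamma}}(j_k)=\mathrm{out}_{\Qbeta}(j_k)$ and $\boldsymbol{\dim}P_{j_k}^{\gamma}=\boldsymbol{\dim}P_{j_k}^\beta$; the $F$-, $H$- and $K$-contributions then assemble into $F(i,\beta)$, $H(i,\beta)$ and (a product of $K_j$'s equal to) $K(i,\beta)$, and the dominant remainder telescopes to $Y[\beta-\alpha_i-\sum_k\boldsymbol{\dim}P_{j_k}^\beta]=Y[\beta-\boldsymbol{\dim}P_i^\beta]$.

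The hard part will be the remaining case $c_i=0$, where $\Ybeta\otimes Y(x_i)$ has no factor $Y(\tau x_i)\otimes Y(x_i)$ to open up and hence the Auslander--Reiten identity cannot be used at $i$ to \emph{start} the telescoping. The way around this is a direct computation of the underlying multiset: one still has the extra copy of $Y(x_i)$, and over a Dynkin quiver indecomposables have no self-extensions, so the Auslander--Reiten identity at $\tau x_i$ places $\Sigma\tau x_i$ inside $H(x_i)$ --- hence the $F(\tau x_i)$-contribution is still available; feeding this in, together with the descent through $\mathrm{Supp}(\beta)$ to the first vertices with $c_\cdot>0$, at which the inductive hypothesis applies, one recovers the stated answer. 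Throughout, the genuinely laborious part is the bookkeeping of the integers $b_k,c_k,d_k$ along $\Qbeta$ and checking that $\bomega$, the dimension vectors $\boldsymbol{\dim}P_j^\gamma$, and $\mathrm{Supp}$ are additive along out-cones; one also uses that $\MQ$, restricted to objects of finite multiset, has cancellation, which is what permits isolating the factor $Y[\beta-\boldsymbol{\dim}P_i^\beta]$.
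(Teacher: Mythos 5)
Your overall strategy --- reduce the isomorphism to an identity of multisets plus an identity of quasi-additive functions, prove the latter by expanding in the linearly independent family $\{h_{\tau x_k},h_{x_k}\}$ via Lemma~\ref{lem : mutation for functions}, and identify the dominant remainder through Corollary~\ref{cor : dominant object as leading object} --- is sound and close in spirit to the paper's; the $\bomega$-bookkeeping you describe is essentially the paper's computation of the integers $c''_k,d''_k$. The genuine problem is the multiset identity, specifically the case $c_i=0$, which you correctly isolate as ``the hard part'' but do not actually resolve. Your inductive engine is top-down: open up the factor $K_i=Y(\tau x_i)\otimes Y(x_i)$ at the root $i$ via Lemma~\ref{lem : mutation for objects in CQ}, release one $Y(x_{j_k})$ per child, and recurse. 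When $c_i=0$ there is no $Y(\tau x_i)$ factor in $\Ybeta\otimes Y(x_i)$: the copy of $\tau x_i$ consumed by $\mu_Z$ sits inside some $H(\tau x_k)$ with $i\rightsquigarrow k$ and $c_k>0$, and $\mu_{\tau x_i}Y(\tau x_k)$ is a tilting at a non-apex element of a hammock, for which Lemma~\ref{lem : mutation for objects in CQ} gives no identity. Observing that $\Sigma\tau x_i\in H(x_i)$ does supply the second element of the multiset of $F(\tau x_i)$, but it produces neither the $Y(x_{j_k})$ factors at the children that your recursion needs, nor a way to invoke the inductive hypothesis after ``descending to the first vertices with $c>0$'': the proposition at such a vertex $k$ requires an extra tensor factor $Y(x_k)$ that is simply not present (try $Q:\,1\leftarrow 2\rightarrow 3$, $\beta=\alpha_1+\alpha_2+\alpha_3$, $i=2$, where $c_2=0$, the only spare factor is $Y(x_2)$, and the two children would each need $Y(x_1)$, resp.\ $Y(x_3)$). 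So the induction does not close.

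The paper avoids this entirely by a different ordering: it works with the enlarged object $\Ybeta\otimes\bigotimes_{j\in\mathrm{out}_{\Qbeta}(i)}Y(x_j)$ (one extra $Y(x_j)$ for \emph{every} vertex of the out-cone, not just for $i$) and performs the tiltings bottom-up, starting at the sinks of $\Qbeta$ below $i$ --- where $c_j=a_j>0$ guarantees a $K_j$ factor --- and moving against the orientation; each tilting $\mu_jK_j\simeq F_j\otimes\cdots$ releases the $Y(\tau x_l)$ needed to assemble $K_l$ at the parent $l$, so Lemma~\ref{lem : mutation for objects in CQ} applies at every step regardless of the sign of $b_l$. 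The superfluous factors $\bigotimes_{j\neq i}Y(x_j)$ are cancelled only at the very end (this is where the cancellation you invoke is actually used). To salvage your top-down argument you would need either to adopt this enlargement or to supply a genuine separate argument for the $c_i=0$ case; as written, that case is asserted rather than proved.
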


\begin{proof}
 We let $c_k,d_k$ denote the non-negative integers such that $\Ybeta \simeq \bigotimes_{k \in I}Y(\tau x_k)^{\otimes c_k} \otimes Y(x_k)^{\otimes d_k}$. 
Consider the following indecomposable dominant object in $\HQ^{(1)}$:
$$ (X,h) := \Ybeta \otimes \bigotimes_{z \in Z}Y(\tau^{-1}z) .$$
In particular we have that $\widetilde{h}(\tau x_k) = c_k$ and $\widetilde{h}(x_k) = d_k + \delta_{k \in \mathrm{out}_{\Qbeta}(i)}$ for each $k \in I$. First of all, we claim that 
 \begin{equation} \label{eq : isom with Serre tilting}
\mu_Z(X,h) \simeq \bigotimes_{z \in Z}F(z) \otimes (X',h') 
\end{equation}
with $(X',h')$  dominant in $\HQ^{(1)}$. To see this, we perform the Serre tiltings on the elements of $Z$ by starting at the vertices $j \in \mathrm{out}_{\Qbeta}(i)$ such that $j$ is a sink in $\Qbeta$ and then following the orientation backwards (ending with the Serre tilting at $i$). Looking at the definition of $\Ybeta$, we see that if $j$ is a sink in $\Qbeta$ then $c_j>0$ (and $d_j=0$) so we can write $(X,h)$ as
$$ (X,h) \simeq \bigotimes_{\substack{j \in \mathrm{out}_{\Qbeta}(i) \\ \text{$j$ sink in $\Qbeta$}}} K_j \otimes \bigotimes_{ \substack{l \in \mathrm{out}_{\Qbeta}(i) \\  \text{$l$ not sink in $\Qbeta$}}} Y(x_l) \otimes (X'',h'')  $$
where $(X'',h'') := = \bigotimes_{k \in I} Y(\tau x_k)^{\tilde{c}_k} Y(x_k)^{d_k}$  with $\tilde{c}_k = c_k - 1$ if $k$ is a sink in $\Qbeta$ with $i \rightsquigarrow k$, $c'_k = c_k$ otherwise. Note in particular that  $(X'',h'')$ is dominant. Then, Lemma~\ref{lem : mutation for objects in CQ} implies that every Serre tilting  $\mu_jK_j$ for $j$ sink  in $\Qbeta$ with $i \rightsquigarrow j$ will be isomorphic to $F_j \otimes Y(\tau x_l)$  tensored by a dominant object, where $l \in \mathrm{out}_{\Qbeta}(i)$ such that $l \rightarrow j$.  Therefore after these Serre tiltings we will obtain 
$$ \bigotimes_{\substack{j \in \mathrm{out}_{\Qbeta}(i) \\ \text{$j$ sink in $\Qbeta$}}} \left(  F_j \otimes \bigotimes_{ \substack{l \in \mathrm{out}_{\Qbeta}(i) \\ l \rightarrow j}} K_l  \right) \otimes \cdots  $$
where $\cdots$ is dominant. Continuing this process, we see that at each step, we always perform Serre tiltings of the form $\mu_j K_j$ which is isomorphic to $F_j$ tensored by a dominant object, due to Lemma~\ref{lem : mutation for objects in CQ}. This proves the claim.

 As $(X',h')$ is dominant, it is entirely determined (up to isomorphism) by $h'$ (cf. Remark~\ref{rk on dominant objects}). Let us denote by $c'_k := \widetilde{h'}(\tau x_k)$ and $d'_k := \widetilde{h'}(x_k)$ for every $k \in I$. It follows from the definition of Serre tiltings that 
 $$ c'_k = c_k - \delta_{k \in \mathrm{out}_{\Qbeta}(i)} + \sharp \{  j \in \mathrm{out}_{\Qbeta}(i),  k \rightarrow j \} $$
 $$ d'_k = d_k  + \sharp \{  j \in \mathrm{out}_{\Qbeta}(i),  j \rightarrow k \}  . $$
 Note that if $k \in \mathrm{Supp}(\beta)$ then there can be an arrow $j \rightarrow k$ with $j \in \mathrm{out}_{\Qbeta}(i)$ only if $k \in \mathrm{out}_{\Qbeta}(i) \setminus \{i \}$. In other words, we have $d'_k>d_k$ if only if either  $k \in \mathrm{out}_{\Qbeta}(i) \setminus \{i \}$ or $k \notin \mathrm{Supp}(\beta)$. Therefore we can rewrite $(X',h')$ as 
  $$  (X'',h'') \otimes H(i, \beta) \otimes  \bigotimes_{z \in Z \setminus \{ \tau x_i\}} Y(\tau^{-1} z)$$
   with $(X'',h'')$ is the dominant object given by 
  $$  (X'',h'') := \bigotimes_{k \in I} Y(\tau x_k)^{\otimes c''_k}  \otimes Y(x_k)^{\otimes d''_k} $$
  with 
  $$   c''_k := c'_k \quad \text{and} \quad  d''_k := \begin{cases}
      d_k -1 + \sharp \{ j \in \mathrm{out}_{\Qbeta}(i) , j \rightarrow k \} & \text{if $k \in \mathrm{out}_{\Qbeta}(i) \setminus \{i\}$,} \\
      d_k & \text{otherwise.}
  \end{cases}    $$
  Hence~\eqref{eq : isom with Serre tilting} becomes 
  $$ \mu_Z(\Ybeta \otimes Y(x_i)) \simeq   F(i, \beta) \otimes  H(i, \beta) \otimes (X'',h'') . $$
 We are now going to show that $\bomega((X'',h'')) = \beta - \boldsymbol{\dim}P_i^{\beta}$. Namely, denoting $\bomega((X'',h'')) := \sum_{k \in I} a''_k \alpha_k$, we prove by induction on $r_k$ that $a''_k = a_k - \sharp \{\text{paths from $i$ to $k$ in $\Qbeta$} \}$ for each $k \in I$. Assume $k \in I$ is such that $r_k=0$ i.e. $k$ is a sink in $Q$. Then obviously $\sharp \{ j \in \mathrm{out}_{\Qbeta}(i) , k \rightarrow j \} = 0$. Moreover, by assumption on $Q$, there is at most one oriented path from $i$ to $k$ so that $ \sharp \{ j \in \mathrm{out}_{\Qbeta}(i) , j \rightarrow k \}  $ is equal to $1$ if $k \in \mathrm{out}_{\Qbeta}(i)$ and is equal to zero otherwise. So, if $k \notin \mathrm{out}_{\Qbeta}(i)$ then $a''_k = c''_k-d''_k = c_k-d_k = a_k$. If $k=i$ then $a''_k = c''_k-d''_k = c_k-1-d_k = a_k-1$. Finally  if $k \in \mathrm{out}_{\Qbeta}(i) \setminus \{i\}$ then 
 \begin{align*}
    a''_k &= c''_k - d''_k = (c_k - 1) - \left(  d_k-1 + \sharp \{ j \in \mathrm{out}_{\Qbeta}(i) , j \rightarrow k \} \right) \\
    &= a_k - 1 .  
 \end{align*}
  Thus the desired identity holds in that case. Assume now that it has been proved for all $k \in I$ such that $r_k \leq m$ and let us fix $k$ such that $r_k = m+1$. 
   Assume $k \notin \mathrm{out}_{\Qbeta}(i)$. Then we have $r_j<r_k$ for every $j \in I$ such that $k \rightarrow j$ and thus using the induction assumption we get
   \begin{align*}
       a''_k &= \max \left( 0 , c''_k-d''_k + \sum_{k \rightarrow j} a''_j \right) \\
       &= \max \left( 0 , c_k -d_k + \sharp \{ j \in \mathrm{out}_{\Qbeta}(i) , k \rightarrow j \}  + \sum_{k \rightarrow j} (a_j - \delta_{ j \in \mathrm{out}_{\Qbeta}(i)} \right) \\
       &= \max \left( 0 , c_k-d_k + \sum_{k \rightarrow j}a_j \right) = a_k .  
   \end{align*}
 Assume now $k=i$. Then similarly we get 
 \begin{align*}
      a''_i &= \max \left( 0  , c''_i-d''_i + \sum_{i \rightarrow j} a''_j \right) \\
      &=  \max \left( 0, c_i-1-d_i + \sharp \{ j \in (\Qbeta)_0   , i \rightarrow j \} +  \sum_{i \rightarrow j} (a_j - \delta_{j \in (\Qbeta)_0})  \right)  \\
      &= \max \left( 0 , c_i-d_i-1 + \sum_{i \rightarrow j} a_j \right) . 
 \end{align*}
 As $i \in \mathrm{Supp}(\beta)$, we have that $a_i>0$ so that $a_i = c_i-d_i + \sum_{i \rightarrow j} a_j$ and thus we obtain 
 $$ a''_i = \max(0,a_i-1) = a_i-1 .  $$
  Finally  if $k \in \mathrm{out}_{\Qbeta}(i) \setminus \{i \}$ then 
 \begin{align*}
    c''_k-d''_k + \sum_{k \rightarrow j} a''_j &=   c_k -1 + \sharp \{ j \in \mathrm{out}_{\Qbeta}(i) , k \rightarrow j\} - d_k +1 \\
    & \qquad \qquad - \sharp \{ j \in \mathrm{out}_{\Qbeta}(i) , j \rightarrow k\} + \sum_{k \rightarrow j}(a_j - \delta_{j \in \mathrm{out}_{\Qbeta}(i)} ) \\
    &= c_k-d_k + \sum_{k \rightarrow j} a_j - \sharp \{ j \in \mathrm{out}_{\Qbeta}(i) , j \rightarrow k\} . 
 \end{align*}
 By assumption on $Q$, there can only be at most one $j \in \mathrm{out}_{\Qbeta}(i)$ such that $ j \rightarrow k$, and in this case there is necessarily one given that $k \in \mathrm{out}_{\Qbeta}(i) \setminus \{i\}$. Hence we obtain 
 $$ a''_k = \max \left( 0 , a_k-1 \right)  = a_k-1 $$
 as $a_k>0$ given that $k \in \mathrm{Supp}(\beta)$. 
  Hence we have proved that $\bomega((X'',h'')) = \beta - \boldsymbol{ \dim}P_i^{\beta}$. The conclusion follows using Corollary~\ref{cor : dominant object as leading object}. 
\end{proof}

 \begin{cor} \label{cor : morphism between leading objects}
Let $i \in \mathrm{Supp}(\beta)$ and let $Z$ denote the finite subset of $\ZQz$ given by $Z := \{ \tau x_j , j \in \mathrm{out}_{\Qbeta}(i) \}$. Then we have that    
$$  \mathrm{Hom}_{\HQ^{(1)}} \left(  \mu_{Z \setminus \{ \tau x_i \}}( \Ybeta \otimes Y(x_i))  \enspace ,  \enspace F(i,\beta) \otimes K(i, \beta) \otimes H(i,\beta) \otimes   Y[\beta -  \boldsymbol{\dim}P_i^{\beta}]  \right) \neq 0.   $$
 \end{cor}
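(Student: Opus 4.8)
The plan is to deduce the statement directly from Proposition~\ref{prop : Serre tilting  of leading object} together with Theorem~\ref{thm : existence of morphisms eta}. Set $W := \mu_{Z \setminus \{\tau x_i\}}(\Ybeta \otimes Y(x_i))$, an indecomposable object of $\HQ^{(1)}$, and denote by $h_W$ its quasi-additive function. Since $i \in \mathrm{out}_{\Qbeta}(i)$, the multiset $Z = \{\tau x_j , j \in \mathrm{out}_{\Qbeta}(i)\}$ splits as $Z = (Z \setminus \{\tau x_i\}) \sqcup \{\tau x_i\}$ into disjoint sub-multisets of $\Ybeta \otimes Y(x_i)$, and since Serre tiltings at disjoint sub-multisets compose (Definition~\ref{def : Serre tiltings}) we get $\mu_i(W) = \mu_{\{\tau x_i\}}\big(\mu_{Z \setminus \{\tau x_i\}}(\Ybeta \otimes Y(x_i))\big) = \mu_Z(\Ybeta \otimes Y(x_i))$. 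By Proposition~\ref{prop : Serre tilting  of leading object}, the latter is isomorphic to $F(i,\beta) \otimes K(i,\beta) \otimes H(i,\beta) \otimes Y[\beta - \boldsymbol{\dim}P_i^{\beta}]$, so the Hom-space in the statement is isomorphic to $\mathrm{Hom}_{\HQ^{(1)}}(W , \mu_i(W))$. By Theorem~\ref{thm : existence of morphisms eta}, this space contains the irreducible morphism $\eta_i^W$ (which is in particular nonzero, an irreducible morphism between nonzero objects being nonzero) as soon as $i \in \Sigma(W)$.

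It therefore only remains to check that $i \in \Sigma(W)$, i.e.\ that $\widetilde{h_W}(\tau x_i) > 0$ (by Lemma~\ref{lem : preliminary for def of eta} this automatically forces $\tau x_i \in W$ as well). Write $\Ybeta = \bigotimes_{k \in I} Y(\tau x_k)^{\otimes c_k} \otimes Y(x_k)^{\otimes d_k}$ as in Section~\ref{sec : (co)leading objects}, so that the quasi-additive function $h_0$ of $\Ybeta \otimes Y(x_i)$ satisfies $\widetilde{h_0} = \sum_k c_k \delta_{\tau x_k} + \sum_k d_k \delta_{x_k} + \delta_{x_i}$ (using $\widetilde{h_y} = \delta_y$ and the linearity of $f \mapsto \widetilde{f}$), whence $\widetilde{h_0}(\tau x_i) = c_i$. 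On the other hand $h_W = h_0 - \sum_{j \in \mathrm{out}_{\Qbeta}(i) \setminus \{i\}} \delta_{\tau x_j}$, and the same elementary computation as the one leading to~\eqref{eq : positivity} shows that subtracting $\delta_{\tau x_j}$ increases the value of $\widetilde{(\cdot)}$ at $\tau x_i$ by exactly $1$ when there is an arrow $i \to j$ in $Q$ (equivalently in $\Qbeta$, since $j \in \mathrm{Supp}(\beta)$), and leaves it unchanged otherwise. Hence
$$ \widetilde{h_W}(\tau x_i) = c_i + \sharp \{ j \in (\Qbeta)_0 \mid i \to j \text{ is an arrow of } \Qbeta \} . $$

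To conclude I would distinguish two cases. If $c_i > 0$ then $\widetilde{h_W}(\tau x_i) > 0$ and we are done. If $c_i = 0$, then $b_i = a_i - \sum_{i \to j} a_j \leq 0$; since $i \in \mathrm{Supp}(\beta)$ we have $a_i > 0$, so $\sum_{i \to j} a_j \geq a_i > 0$ and there exists an arrow $i \to j$ in $Q$ with $a_j > 0$, i.e.\ with $j \in \mathrm{Supp}(\beta)$, so that $i \to j$ is an arrow of $\Qbeta$ and $\widetilde{h_W}(\tau x_i) \geq 1 > 0$. In both cases $i \in \Sigma(W)$, so $\eta_i^W$ is a nonzero element of $\mathrm{Hom}_{\HQ^{(1)}}(W, \mu_i(W))$, which is the desired conclusion. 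The only mildly delicate point in this argument is the bookkeeping in the computation of $\widetilde{h_W}(\tau x_i)$ --- in particular obtaining \emph{strict} positivity in the degenerate case $c_i = 0$, which is exactly where the hypothesis $i \in \mathrm{Supp}(\beta)$ enters.
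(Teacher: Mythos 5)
Your proof is correct and follows essentially the same route as the paper's: identify the target with $\mu_i$ of the source via Proposition~\ref{prop : Serre tilting  of leading object}, then verify $\widetilde{h}(\tau x_i)>0$ so that the tilting at $\tau x_i$ supports a nonzero morphism (the paper concludes ``by definition of the morphisms in $\HQ$'', you via $\eta_i$, which is the same thing). The only immaterial difference is that you split the positivity check according to $c_i>0$ versus $c_i=0$, whereas the paper splits according to whether $i$ is a sink in $\Qbeta$; the two case analyses are equivalent.
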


\begin{proof}
Denoting $(X,h) := \mu_{Z \setminus \{ \tau x_i \}}( \Ybeta \otimes Y(x_i))$, Proposition~\ref{prop : Serre tilting  of leading object} tells us that $\mu_i(X,h) \simeq F(i,\beta) \otimes K(i, \beta) \otimes H(i,\beta) \otimes   Y[\beta -  \boldsymbol{\dim}P_i^{\beta}] $. Moreover we have (by definition of Serre tiltings)
$$ h = h_{\beta} - \sum_{j \in \mathrm{out}_{\Qbeta}(i)} \delta_{\tau x_j} $$
where $h_{\beta}$ denotes the quasi-additive function of $\Ybeta$. Hence we have 
\begin{align*}
    \widetilde{h}(\tau x_i) &= \widetilde{h_{\beta}}(\tau x_i) + \sharp \{ j \in \mathrm{out}_{\Qbeta}(i) , \rightarrow j \} = \begin{cases}
        \widetilde{h_{\beta}}(\tau x_i) & \text{if $i$ is a sink in $\Qbeta$,} \\
        \widetilde{h_{\beta}}(\tau x_i) + 1 & \text{otherwise. }
    \end{cases}
\end{align*}
 It follows from the definition of $\Ybeta$ that if $i$ is a sink in $\Qbeta$ then $\widetilde{h_{\beta}}(\tau x_i)>0$ and hence $\widetilde{h}(\tau x_i)>0$. If on the other hand $i$ is not a sink in $\Qbeta$ then Lemma~\ref{lem : positivity on the section} implies that $\widetilde{h}(\tau x_i) \geq 1$ so finally we have $\widetilde{h}(\tau x_i)>0$ in both cases. Hence the desired statement, by definition of the morphisms in the category $\HQ$.  
\end{proof}

   \subsection{Combinatorial setup}
    \label{sec : R and i(beta)}

   We assume as in Section~\ref{sec : morphisms between objects} that $Q$ is such a finite quiver such that there is at most one oriented path from one vertex to another. Given an element $\beta = \sum_i a_i \alpha_i$ of the positive orthant of the  corresponding root lattice, we denote by $\mathrm{Supp}(\beta) := \{ i \in I \mid a_i >0 \}$ and by $\Qbeta$ the subquiver of $Q$ having $\mathrm{Supp}(\beta)$ as set of vertices. Adapting~\eqref{eq : def of ri}, we define 
 $$ \forall i \in \mathrm{Supp}(\beta), \enspace r_i^{\beta} := \sum_{ \substack{j \in \mathrm{Supp}(\beta)\\ i \rightsquigarrow j}} (\xi(i)-\xi(j)) . $$ 
 We also set 
 $$ M (\beta) := \{ i \in I \mid a_i = \min_{j \in \mathrm{Supp}(\beta)} a_j \} . $$
 We then define  a subset $I(\beta)$ of $M(\beta)$ as 
 $$ I(\beta) := \{ i \in M(\beta) \mid  r_i^{\beta} = \min_{j \in M(\beta)} r_j^{\beta} \} .  $$

\subsection{Construction of the chain complexes $\Cbeta$}
 \label{sec : construction of Cbeta}

 Recall the chain complexes $\mathbf{K}_i, \mathbf{H}_i$ and $\mathbf{F}_i$ defined in Section~\ref{sec : initial complexes}. Moreover, given $\beta$ in the positive orthant of the root lattice and $i \in \mathrm{Supp}(\beta)$ we denote by $\mathbf{F}(i,\beta) , \mathbf{H}(i, \beta)$ and $\mathbf{K}(i, \beta)$ the chain complexes defined in terms of the initial chain complexes in the same way that $F(i, \beta) , H(i, \beta) $ and $K(i, \beta)$ are defined in terms of the $F_i,H_i,K_i$. In particular $\mathbf{F}(i, \beta)$ is concentrated in degree $\sharp \mathrm{out}_{\Qbeta}(i)$ while $\mathbf{H}(i, \beta)$ and $\mathbf{K}(i, \beta)$ are concentrated in degree $0$.  
 We now construct a strongly $\HQ^{(1)}$-exact chain complex $\Cbeta \in \KQ^{(1)}$ for each $\beta \leq \theta$ in the positive orthant of the root lattice. More precisely, we establish the following. 
 
 \begin{thm} \label{thm : chain complexes Cbeta}
   For every element $\beta$ of the positive orthant of the root lattice, there is a unique strongly $\HQ^{(1)}$ exact chain complex $\Cbeta$ in $\KQ^{(1)}$ such that $C_0[\beta] \simeq \Ybeta$ and such that, if $i \in \mathrm{Supp}(\beta)$ and $Z := \{ \tau x_j , j \in \mathrm{out}_{\Qbeta}(i)\}$ then  the indecomposable object $\mu_Z(\Ybeta)$ appears as summand of $C_m[\beta]$, where $m := \sharp Z$. 
 \end{thm}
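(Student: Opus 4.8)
The plan is to construct $\Cbeta$ explicitly by induction on a suitable complexity statistic, and then invoke the uniqueness Lemma~\ref{lem :uniqueness up to isom} to get uniqueness for free. The natural statistic to induct on is the height $|\beta| := \sum_i a_i$ of $\beta$ in the root lattice, with base case $|\beta| \leq 1$, i.e. $\beta = \alpha_i$ for some $i \in I$: here $\Ybeta = Y(x_i)$ if $\xi(i)$ places it appropriately, or more precisely $\Ybeta = Y(\tau x_i)$ or $Y(x_i)$ according to the sign of $b_i$ as in Section~\ref{sec : (co)leading objects}, and we simply take $\Cbeta$ to be one of the initial complexes $\mathbf{H}_i$ or $\mathbf{K}_i$ from Section~\ref{sec : initial complexes} (up to the tensor factors $K_j$), which are already strongly $\HQ^{(1)}$-exact. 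For the inductive step, I would fix $\beta$ with $|\beta| \geq 2$, pick the distinguished vertex $i \in I(\beta)$ given by the combinatorial setup of Section~\ref{sec : R and i(beta)}, and consider the smaller element $\beta' := \beta - \boldsymbol{\dim}P_i^{\beta}$; since $i \in \mathrm{Supp}(\beta)$, we have $|\beta'| < |\beta|$, so by induction there is a strongly $\HQ^{(1)}$-exact complex $C_\bullet[\beta']$.

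The key construction is an iterated mapping cone. By Lemma~\ref{lem : isom between leading objects} (applied to peel off $Y(x_i)$) and then Proposition~\ref{prop : Serre tilting of leading object}, the object $\mu_Z(\Ybeta \otimes Y(x_i))$ with $Z = \{\tau x_j, j \in \mathrm{out}_{\Qbeta}(i)\}$ is isomorphic to $F(i,\beta) \otimes K(i,\beta) \otimes H(i,\beta) \otimes Y[\beta']$, and Corollary~\ref{cor : morphism between leading objects} furnishes a nonzero morphism from $\mu_{Z \setminus \{\tau x_i\}}(\Ybeta \otimes Y(x_i))$ to this object. The plan is to build, from the complex $\mathbf{H}_i \otimes C_\bullet[\beta]$ versus an appropriate twist of $C_\bullet[\beta']$ (tensored with the frozen complexes $\mathbf{K}(i,\beta)$ and the dominant complexes $\mathbf{H}(i,\beta)$, $\mathbf{F}(i,\beta)$ — all of which preserve strong exactness, the first by Lemma~\ref{lem : product by frozen still exact}), a chain map realizing these morphisms, and then set $C_\bullet[\beta]$ to be (up to shift) the mapping cone. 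The morphisms $\eta_i^{(X,h)}$ produced by Theorem~\ref{thm : existence of morphisms eta} are precisely the differentials that appear: each Serre tilting $\mu_j$ along a vertex $j \in \mathrm{out}_{\Qbeta}(i)$ contributes a component $\eta_j \otimes \mathrm{id}$, so the resulting complex automatically has all nontrivial differential components of the required form $\eta_i \otimes \mathrm{id}$, giving \emph{strong} exactness once plain exactness is known. I would perform the Serre tiltings in the order dictated by Proposition~\ref{prop : Serre tilting of leading object} (sinks of $\Qbeta$ first, following the orientation backwards, ending at $i$), so that each stage is an honest mapping cone of a morphism with a weak cokernel, the existence of which is exactly what Theorem~\ref{thm : existence of morphisms eta}(2) guarantees.

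Verifying $\HQ^{(1)}$-exactness of the cone is the heart of the argument. By definition (Definition~\ref{def : CQ1 exact chain complexes}) I must check: first, $C_n[\beta] = 0$ for $n<0$ and $C_0[\beta] \simeq \Ybeta$ indecomposable dominant — the former is clear by construction and the latter holds because the degree-zero part of the cone is $\Ybeta$ (the shift is chosen so that $\mathbf{H}_i$'s contribution lands in degree $0$) and $\bomega(\Ybeta) = \beta$ by Lemma~\ref{lem : support}; second, the factorization property for morphisms $f: C_n[\beta] \to \mu_W(C_0[\beta])$ with $W \cap \mathrm{Supp}(C_0[\beta]) \neq \emptyset$ killed by $d_{n-1}$. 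For this I would use the long-exact-sequence / triangle structure of the mapping cone: a morphism out of the cone that kills the incoming differential factors through the outgoing one provided the corresponding statement holds in the two complexes forming the cone; the case $W$ involving $\tau x_i$ is handled by Theorem~\ref{thm : existence of morphisms eta}(2) (the weak-cokernel property of the $\eta_i$), and the cases $W$ meeting $\mathrm{Supp}(\beta')$ but not $i$ are handled by the inductive exactness of $C_\bullet[\beta']$, using that $\mathrm{Supp}(\beta') = \mathrm{Supp}(\beta) \setminus \{\text{vertices killed}\}$ is compatible via $\bomega$. One must also check that $\mu_Z(\Ybeta)$ genuinely appears as a summand of $C_m[\beta]$ in degree $m = \sharp Z$, which is immediate from the cone construction since $\mu_Z(\Ybeta)$ is one of the terms produced at the last Serre-tilting stage.

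\textbf{The main obstacle} I anticipate is bookkeeping the combinatorics of which Serre tiltings occur at which homological degree and checking that the vanishing composition $\eta_j \circ \eta_i = 0$ from Theorem~\ref{thm : existence of morphisms eta}(1) is \emph{exactly} the condition ensuring that the iterated mapping cone is well-defined as a chain complex (i.e. $d^2 = 0$) — and, more delicately, that the factorization condition of $\HQ^{(1)}$-exactness does not fail for some ``mixed'' morphism that interacts with both the $\beta'$-part and the newly tilted part simultaneously. Controlling these mixed morphisms requires knowing that $\mathrm{Hom}_{\HQ^{(1)}}$ decomposes as a tensor product over the multiset, so that any morphism out of $C_n[\beta]$ splits as a tensor factor landing in the $Y[\beta']$-block times factors in the frozen/dominant blocks, reducing to the two inductive cases; this is where the precise tensor-product form of the Hom spaces in $\HQ^{(1)}$ (and the restriction that codomains of nontrivial components lie in $\{x\} \cup \{S\tau x_i\}$) must be used carefully. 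Once this reduction is in place, uniqueness is then a direct application of Lemma~\ref{lem :uniqueness up to isom}.
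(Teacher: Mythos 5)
Your overall strategy is the paper's: induct on the height of $\beta$, pick $i \in I(\beta)$, produce a morphism of chain complexes out of the leading-object morphisms of Corollary~\ref{cor : morphism between leading objects}, take its mapping cone, observe that the cone factors as $\Cbeta \otimes \mathbf{H}_i$ (Lemma~\ref{lem  : renorm by H_i}), verify strong $\HQ^{(1)}$-exactness by decomposing a morphism out of the cone into its components on the two constituent complexes (the $\tau x_i$-component handled by the weak-cokernel property of Theorem~\ref{thm : existence of morphisms eta}, the rest by inductive exactness), and obtain uniqueness from Lemma~\ref{lem :uniqueness up to isom}. All of this matches Propositions~\ref{prop : morphism of chain complexes} and~\ref{prop : Cbeta as mapping cone}.

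There is, however, one genuine defect in your construction step. You write that the chain map is built ``from the complex $\mathbf{H}_i \otimes C_\bullet[\beta]$ versus an appropriate twist of $C_\bullet[\beta']$'' and that $C_\bullet[\beta]$ is then \emph{defined} as (a shift of) its cone; as stated this is circular, since $C_\bullet[\beta]$ cannot appear in the data of the morphism whose cone defines it. The correct source is the \emph{other} inductively known complex: by Lemma~\ref{lem : isom between leading objects} the degree-one object of the cone, namely $\Ybeta \otimes Y(x_i)$, is isomorphic to $K_i^{\otimes \epsilon_i} \otimes Y[\beta - \boldsymbol{\dim} I_i^{\beta}]$, so the domain of $u_{\bullet}^{\beta}$ must be $\mathbf{K}_i^{\otimes \epsilon_i} \otimes C_\bullet[\beta - \boldsymbol{\dim} I_i^{\beta}][-1]$, which requires invoking the inductive hypothesis for $\beta - \boldsymbol{\dim} I_i^{\beta}$ as well as for $\beta - \boldsymbol{\dim} P_i^{\beta}$ (your write-up only invokes it for the latter). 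This is also where the extra clause of the theorem --- that $\mu_Z(\Ybeta)$ occurs as a summand of $C_m[\beta]$ with $m = \sharp Z$ --- earns its keep: it is part of the inductive hypothesis, applied to $\beta - \boldsymbol{\dim} I_i^{\beta}$, needed to locate the summand $\mu_{Z \setminus \{\tau x_i\}}(\Ybeta \otimes Y(x_i))$ on which the component $\eta_i \otimes \mathrm{id}$ of the chain map is placed (and on whose complement the map is set to zero, so that the composition with the incoming differential vanishes by Theorem~\ref{thm : existence of morphisms eta}(1)); it is not merely a property to be verified afterwards. Relatedly, your ``one cone per Serre tilting'' phrasing is not how the induction runs: within a single step there is a single cone, and the iteration over $\mathrm{out}_{\Qbeta}(i)$ is already packaged inside the inductively constructed domain complex. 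Once the domain is corrected in this way, the rest of your argument goes through essentially as in the paper.
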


 The proof of this result goes by induction on the height of $\beta$. Let us fix $\beta$ of height $r \geq 1$ and assume $\Cbeta $ has been constructed for every element of the positive orthant of height strictly less than $r$. We then show the following:

  \begin{prop} \label{prop : morphism of chain complexes}
     Let $i \in I(\beta)$ with the notations of Section~\ref{sec : R and i(beta)}. Then there is a non-trivial morphism in $\KQ^{(1)}$:
     $$ u_{\bullet}^{\beta} : \enspace \mathbf{K}_i^{\otimes \epsilon_i} \otimes C_{\bullet}[\beta - \boldsymbol{\dim}I_i^{\beta}][-1] \longrightarrow \mathbf{F}(i, \beta) \otimes  \mathbf{H}(i, \beta) \otimes  \mathbf{K}(i, \beta) \otimes  C_{\bullet}[\beta - \boldsymbol{\dim}P_i^{\beta}] . $$
  \end{prop}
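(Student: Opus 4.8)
The plan is to build the morphism $u_\bullet^\beta$ degree by degree, using the inductive construction of $C_\bullet[\beta']$ for lower heights together with the morphisms between Serre tiltings of leading objects established in Section~\ref{sec : morphisms between objects}. First I would recall that, by Lemma~\ref{lem : isom between leading objects}, the degree-zero term of the source is $\mathbf{K}_i^{\otimes \epsilon_i} \otimes C_0[\beta - \boldsymbol{\dim}I_i^\beta] \simeq \Ybeta \otimes Y(x_i)$ after the shift $[-1]$, so that the source complex begins (in homological degree $1$) with the object $\Ybeta \otimes Y(x_i)$. On the target side, using Proposition~\ref{prop : Serre tilting of leading object}, the term $\mathbf{F}(i,\beta) \otimes \mathbf{H}(i,\beta) \otimes \mathbf{K}(i,\beta) \otimes C_0[\beta - \boldsymbol{\dim}P_i^\beta]$ contains $\mu_Z(\Ybeta \otimes Y(x_i))$ as a summand in homological degree $m = \sharp Z = \sharp\,\mathrm{out}_{\Qbeta}(i)$, where $Z = \{\tau x_j, j \in \mathrm{out}_{\Qbeta}(i)\}$. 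The key point will be that the complex $\mathbf{K}_i^{\otimes\epsilon_i} \otimes C_\bullet[\beta - \boldsymbol{\dim}I_i^\beta][-1]$, precisely because $C_\bullet[\beta - \boldsymbol{\dim}I_i^\beta]$ is strongly $\HQ^{(1)}$-exact and contains $\mu_{Z}(\Ybeta\otimes Y(x_i))$-type terms as prescribed by Theorem~\ref{thm : chain complexes Cbeta} applied at lower height, matches up term by term with a subcomplex of the target in degrees $1,\dots,m$, shifted so that $\Ybeta \otimes Y(x_i)$ in degree $1$ of the source maps eventually to $\mu_Z(\Ybeta \otimes Y(x_i))$ in degree $m$ of the target.

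Concretely, I would proceed as follows. In degree $1$, set the component of $u_\bullet^\beta$ to be (an isomorphism onto) the summand $\Ybeta \otimes Y(x_i) = C_0[\beta - \boldsymbol{\dim}I_i^\beta] \otimes \mathbf{K}_i^{\otimes\epsilon_i}$ of the source, followed by the composition of the differentials $\eta_j \otimes \mathrm{id}$ along a chain of Serre tiltings indexed by the vertices of $\mathrm{out}_{\Qbeta}(i)$, ordered compatibly with $r^\beta$: by Theorem~\ref{thm : existence of morphisms eta}(i) these compositions of the $\eta$'s along paths in $Q$ vanish two steps at a time, so the relevant "partial products" $\eta_{j_1} \otimes \cdots \otimes \eta_{j_k}$ are the natural maps $\Ybeta \otimes Y(x_i) \to \mu_{\{\tau x_{j_1},\dots,\tau x_{j_k}\}}(\Ybeta \otimes Y(x_i))$ landing (after Corollary~\ref{cor : dominant object as leading object} / Proposition~\ref{prop : Serre tilting of leading object}) inside the degree-$k$ term of the target complex. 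In the remaining degrees $>m$, the source term $\mathbf{K}_i^{\otimes\epsilon_i} \otimes C_{n-1}[\beta - \boldsymbol{\dim}I_i^\beta]$ and the corresponding part of the target are governed by the recursive structure of the two complexes $C_\bullet[\beta - \boldsymbol{\dim}I_i^\beta]$ and $C_\bullet[\beta - \boldsymbol{\dim}P_i^\beta]$; since $\boldsymbol{\dim}P_i^\beta = \boldsymbol{\dim}I_i^\beta + \alpha_i + (\text{stuff involving } \mathrm{out}_{\Qbeta}(i))$ at the level of roots, and since both complexes are built by the same iterated mapping cone procedure, one obtains a well-defined chain map by declaring it to be (identity tensor) the recursively defined morphism of the appropriate lower-height data, twisted by the factors $\mathbf{F}(i,\beta), \mathbf{H}(i,\beta), \mathbf{K}(i,\beta)$.

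The verification that $u_\bullet^\beta$ is a genuine morphism of chain complexes, i.e. commutes with the differentials, is where the bulk of the work lies. The squares in degrees $1$ through $m$ commute precisely because of the two properties in Theorem~\ref{thm : existence of morphisms eta}: property (i) gives the vanishing $\eta_j \circ \eta_i = 0$ needed for the source complex's differentials to be zero on the relevant summand, so that applying a further $\eta$ on the target side still lands consistently; property (ii) (the weak-cokernel/factorization property) guarantees that whatever component of $d^{C_\bullet[\beta - \boldsymbol{\dim}P_i^\beta]}$ we hit, it factors through the differentials we have used, so the next square can be completed. In the degrees beyond $m$ one invokes the inductive hypothesis that the lower-height chain complexes exist and are strongly $\HQ^{(1)}$-exact, and the functoriality of tensoring with the fixed complexes $\mathbf{K}_i^{\otimes\epsilon_i}$, $\mathbf{F}(i,\beta)$, $\mathbf{H}(i,\beta)$, $\mathbf{K}(i,\beta)$ (which is immediate since these are concentrated in a single degree, cf. the argument in Lemma~\ref{lem : product by frozen still exact}). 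Non-triviality of $u_\bullet^\beta$ is clear since its degree-$1$ component is an isomorphism onto a nonzero summand — this uses Corollary~\ref{cor : morphism between leading objects}, which already tells us that the relevant $\mathrm{Hom}$ space is nonzero.

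The main obstacle I anticipate is bookkeeping the ordering and compatibility of the iterated Serre tiltings along $\mathrm{out}_{\Qbeta}(i)$: one must check that performing the tiltings "following the orientation backwards" (as in the proof of Proposition~\ref{prop : Serre tilting of leading object}) produces at each intermediate stage an object that genuinely appears as a summand of the correct homological degree of the target complex $\mathbf{F}(i,\beta) \otimes \mathbf{H}(i,\beta) \otimes \mathbf{K}(i,\beta) \otimes C_\bullet[\beta - \boldsymbol{\dim}P_i^\beta]$, and that the differential components one is forced to use (the $\eta_j \otimes \mathrm{id}$) are exactly the ones present in that complex by strong $\HQ^{(1)}$-exactness. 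Making this matching precise, and checking that the degrees line up (the shift $[-1]$ on the source versus the degree-$m$ placement of $\mu_Z(\Ybeta \otimes Y(x_i))$ on the target, with $m = \sharp\,\mathrm{out}_{\Qbeta}(i)$), is the combinatorial heart of the proof and is where one must use the hypothesis $i \in I(\beta)$ — the minimality of $a_i$ and of $r_i^\beta$ ensures that $\beta - \boldsymbol{\dim}I_i^\beta$ and $\beta - \boldsymbol{\dim}P_i^\beta$ are again in the positive orthant, so the inductive hypothesis genuinely applies.
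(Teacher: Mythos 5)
There is a genuine gap, and it sits at the heart of your construction. A morphism in $\KQ^{(1)}$ is (a homotopy class of) a chain map, so each component $u_n^{\beta}$ must go from the degree-$n$ term of the source to the degree-$n$ term of the target. Your source is concentrated in degrees $\geq 1$ with degree-$1$ term $\Ybeta \otimes Y(x_i)$, but the target is concentrated in degrees $\geq m$ with $m = \sharp\,\mathrm{out}_{\Qbeta}(i)$, because $\mathbf{F}(i,\beta)$ sits in degree $m$. Hence, whenever $i$ is not a sink in $\Qbeta$, the components $u_1^{\beta},\dots,u_{m-1}^{\beta}$ are forced to be zero, and your plan of ``setting the degree-$1$ component to be an isomorphism onto $\Ybeta\otimes Y(x_i)$ followed by a composition of the $\eta_j\otimes\mathrm{id}$ landing in the degree-$k$ terms of the target'' does not define a chain map: it sends a degree-$1$ object to degree-$k$ objects for $k=1,\dots,m$, and there is no ``subcomplex of the target in degrees $1,\dots,m$'' to match against since those terms vanish below degree $m$. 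For the same reason your closing claim that non-triviality is clear ``since the degree-$1$ component is an isomorphism onto a nonzero summand'' fails. (There is also an internal inconsistency: you invoke Theorem~\ref{thm : existence of morphisms eta}(i) to say the compositions of the $\eta$'s along paths vanish, and in the same breath use those compositions as the nonzero partial maps.)

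The paper's construction avoids this entirely: the first (and only hand-built) nonzero component lives in degree $m$. One uses the extra clause in Theorem~\ref{thm : chain complexes Cbeta}, applied inductively to $\beta - \boldsymbol{\dim}I_i^{\beta}$, to know that $\mu_{Z\setminus\{\tau x_i\}}(\Ybeta\otimes Y(x_i))$ occurs as an indecomposable summand of the degree-$m$ term of the source; the component $u_m^{\beta}$ is declared to be $\eta_i\otimes\mathrm{id}$ on that single summand and zero elsewhere, landing in $\mu_Z(\Ybeta\otimes Y(x_i))$, which is the degree-$m$ term of the target by Proposition~\ref{prop : Serre tilting  of leading object}. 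One then checks $u_m^{\beta}\circ d_{m-1}=0$ because, by strong exactness, the incoming differential components are $\eta_j\otimes\mathrm{id}$ with $j\in\mathrm{out}_{\Qbeta}(i)\setminus\{i\}$ and Theorem~\ref{thm : existence of morphisms eta}(i) kills the compositions. Only after that does one extend to degrees $>m$ by the lifting argument you correctly describe (vanishing via Theorem~\ref{thm : existence of morphisms eta}(i) for $j\rightsquigarrow i$, factorization through $d_n$ via $\HQ^{(1)}$-exactness otherwise). Your proposal does contain the right ingredients for the sink case $m=1$ and for the degree-by-degree lifting, but the non-sink case as you describe it would not produce a morphism in $\KQ^{(1)}$ at all.
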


 \begin{proof}
 Denote respectively  by $C_{\bullet}$ and $D_{\bullet}$ the domain and codomain of the claimed morphism. Assume  first that $i$ is a sink in $\Qbeta$. Then both $C_{\bullet}$ and $D_{\bullet}$ are trivial in degrees less or equal than $0$ and their respective degree $1$ objects are indecomposable in $\HQ^{(1)}$. Combining Lemma~\ref{lem : isom between leading objects} together with Corollary~\ref{cor : morphism between leading objects}, we obtain a non-trivial morphism $C_1 \rightarrow D_1$. Moreover this morphism is of the form $\eta_i \otimes \mathrm{id}$. Consider the composition $d_1^{D_{\bullet}} \circ u_1$. If $g$ is a  non-trivial component of $d_1^{D_{\bullet}}$, then by strong exactness of $D_{\bullet}$ we have that $g$ is of the form $\eta_j \otimes \mathrm{id}$ for some $j \in \mathrm{Supp}(D_1) =  \mathrm{Supp}(\beta- \alpha_i)$. In particular we have that $j \in \mathrm{Supp}(\beta)$.  If there is no oriented path from $j$ to $i$ in $Q$ then $j \in \mathrm{Supp}(\beta-\sum_{j \rightsquigarrow i} \alpha_j) = \mathrm{Supp}(C_1)$ and hence $g \circ u_1$ factors through $d_1^{C_{\bullet}}$ by $\HQ^{(1)}$-exactness of $C_{\bullet}$. If on the other hand $j \rightsquigarrow i$ then the composition will be trivial by Theorem~\ref{thm : existence of morphisms eta}(i). 

  If now $i$ is not a sink in $\Qbeta$ we first need to check that the morphism $u = \eta_i \otimes \mathrm{id}$ given by Corollary~\ref{cor : morphism between leading objects} can be chosen in such a way that $ u \circ d_{m-1}^{C_{\bullet}} = 0$. The induction assumption on $C_{\bullet}[\beta - \boldsymbol{\dim}I_i^{\beta}]$ implies that $\mu_{Z \setminus \{ \tau x_i\}}(\Ybeta \otimes Y(x_i)) $ appears as indecomposable summand in $C_{m-1}$. In particular the components of $d_{m-1}^{C_{\bullet}}$ having $\mu_{Z \setminus \{ \tau x_i\}}(\Ybeta \otimes Y(x_i)) $ as codomain are either trivial or of the form $\eta_j \otimes \mathrm{id}$ with $j \in \mathrm{out}_{\Qbeta}(i) \setminus \{i\}$, by strong exactness of $C_{\bullet}$. Hence it suffices to choose the components of $u$ to be $\eta_i \otimes \mathrm{id}$ on $\mu_{Z \setminus \{ \tau x_i\}}(\Ybeta \otimes Y(x_i)) $ and trivial on the other summands of $C_{m-1}$. Theorem~\ref{thm : existence of morphisms eta}(i) will then imply that the desired composition is trivial. The fact that the morphism $C_{m-1} \rightarrow D_{m-1}$ lifts to a morphism of chain complexes follows from the exactness of $C_{\bullet}$ and $D_{\bullet}$ in the same way as in the case where $i$ is a sink in $\Qbeta$. 
 \end{proof}

Recall the complex $\mathbf{H}_i$ defined in Section~\ref{sec : initial complexes}. The following is immediate. 

 \begin{lem} \label{lem  : renorm by H_i}
   The mapping cone of the morphism $u_{\bullet}^{\beta}$ above can be written as $C_{\bullet} \otimes \mathbf{H}_i$ for some chain complex  $C_{\bullet}$ in $\KQ^{(1)}$.   
 \end{lem}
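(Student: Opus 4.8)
The plan is to show that every indecomposable object occurring as a summand in the domain or the codomain of $u_{\bullet}^{\beta}$ admits $\mathbf{H}_i = Y(x_i)$ as a tensor factor, and that $u_{\bullet}^{\beta}$ together with all the differentials of its domain and codomain act as the identity on that factor. Cancelling this factor then produces a chain complex $C_{\bullet}$ in $\KQ^{(1)}$ with $\mathrm{Cone}(u_{\bullet}^{\beta}) = C_{\bullet} \otimes \mathbf{H}_i$, since $\mathbf{H}_i$ is concentrated in homological degree $0$ with object $Y(x_i)$, so that tensoring by it does not shift degrees.

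First I would treat the domain $\mathbf{K}_i^{\otimes \epsilon_i} \otimes C_{\bullet}[\beta - \boldsymbol{\dim}I_i^{\beta}][-1]$. If $\epsilon_i = 1$ the factor $\mathbf{K}_i = Y(\tau x_i) \otimes Y(x_i)$ already supplies $Y(x_i)$. If $\epsilon_i = 0$, then $c_i = 0$ for $\Ybeta$, and a short computation of the integers $b_k$ attached to $\beta - \boldsymbol{\dim}I_i^{\beta}$ (using that $i \rightsquigarrow i$ and that, since $Q$ is acyclic, no arrow out of $i$ reaches a vertex $\rightsquigarrow i$) gives $b_i - 1 < 0$ as the $b$-coefficient at $i$, so that $Y[\beta - \boldsymbol{\dim}I_i^{\beta}]$ itself has $Y(x_i)$ as a tensor factor. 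In either case, by the inductive strong $\HQ^{(1)}$-exactness of $C_{\bullet}[\beta - \boldsymbol{\dim}I_i^{\beta}]$ (Definition~\ref{def : strong exactness}), every indecomposable summand of every $C_n[\beta - \boldsymbol{\dim}I_i^{\beta}]$ is obtained from the leading object by Serre tiltings at a subset of $\{\tau x_k , k \in I\}$; since $x_i \notin \{\tau x_k , k \in I\}$ these tiltings never touch the copies of $x_i$ forming the distinguished $Y(x_i)$-factor, and so that factor persists in every homological degree. For the codomain $\mathbf{F}(i,\beta) \otimes \mathbf{H}(i,\beta) \otimes \mathbf{K}(i,\beta) \otimes C_{\bullet}[\beta - \boldsymbol{\dim}P_i^{\beta}]$ I would use Proposition~\ref{prop : Serre tilting  of leading object}: its degree-$0$ object is $\mu_Z(\Ybeta \otimes Y(x_i))$ with the factor $F(i,\beta)$ stripped off, where $Z = \{\tau x_j , j \in \mathrm{out}_{\Qbeta}(i)\}$. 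Since $\mu_Z$ tilts only at the elements of $Z$, and $F(i,\beta), H(i,\beta)$ are built from objects of the form $S\tau x_j, \Sigma \tau x_j$ and $x_l$ with $l \neq i$ -- none of which equals $x_i$ -- the $Y(x_i)$-factor of $\Ybeta \otimes Y(x_i)$ survives, landing inside $\mathbf{K}(i,\beta) \otimes C_{\bullet}[\beta - \boldsymbol{\dim}P_i^{\beta}]$ in degree $0$ (concretely, either $\mathbf{K}(i,\beta)$ contains $\mathbf{K}_i$ or $Y[\beta - \boldsymbol{\dim}P_i^{\beta}]$ contains $Y(x_i)$). As before, strong $\HQ^{(1)}$-exactness of $C_{\bullet}[\beta - \boldsymbol{\dim}P_i^{\beta}]$ guarantees the tiltings in higher degrees involve only $\tau x_k$-slots, and tensoring by $\mathbf{F}(i,\beta)$ and $\mathbf{H}(i,\beta)$ only enlarges the underlying multisets, so the $Y(x_i)$-factor is present in every degree of the codomain too.

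It remains to check that $u_{\bullet}^{\beta}$ and all differentials act as the identity on this factor. By strong $\HQ^{(1)}$-exactness, each non-trivial component of a differential of $C_{\bullet}[\beta - \boldsymbol{\dim}I_i^{\beta}]$ or of $C_{\bullet}[\beta - \boldsymbol{\dim}P_i^{\beta}]$ is of the form $\eta_j \otimes \mathrm{id}$; inspecting the explicit formula for $\eta_j$ from the proof of Theorem~\ref{thm : existence of morphisms eta}, such a map acts non-trivially only on one slot indexed by $\tau x_j$ and one indexed by the auxiliary object $x$ of Lemma~\ref{lem : intersection of hammocks}, and these can be chosen disjoint from the $Y(x_i)$-factor since $x_i$ equals neither $\tau x_j$ nor $x$. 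The complexes $\mathbf{F}(i,\beta), \mathbf{H}(i,\beta), \mathbf{K}(i,\beta), \mathbf{K}_i$ carry zero differentials, and $u_{\bullet}^{\beta}$ was produced in Proposition~\ref{prop : morphism of chain complexes} with components of the form $\eta_i \otimes \mathrm{id}$, which likewise touches only $\tau x_i$- and $x$-slots. Hence every component of every differential of $\mathrm{Cone}(u_{\bullet}^{\beta})$ restricts to the identity on the distinguished $Y(x_i)$-factor, and deleting that factor yields a well-defined complex $C_{\bullet}$ in $\KQ^{(1)}$ (each summand with one copy of $Y(x_i)$ removed -- for instance $K_i$ becoming $Y(\tau x_i)$ -- still lying in $\HQ^{(1)}$) with $\mathrm{Cone}(u_{\bullet}^{\beta}) = C_{\bullet} \otimes \mathbf{H}_i$.

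The one step that genuinely requires care is the object bookkeeping in the second paragraph: tracing a single fixed copy of the tensor factor $Y(x_i)$ through the iterated Serre tiltings occurring inside $C_{\bullet}[\beta - \boldsymbol{\dim}I_i^{\beta}]$ and $C_{\bullet}[\beta - \boldsymbol{\dim}P_i^{\beta}]$ and through the auxiliary factors $\mathbf{F}(i,\beta), \mathbf{H}(i,\beta), \mathbf{K}(i,\beta)$. This ultimately rests on the elementary observation that $x_i$ coincides with none of the objects $\tau x_k$, $S\tau x_k$, $\Sigma \tau x_k$ ($k \in I$) nor with the auxiliary object $x$ -- for $\tau x_k$ this is immediate from the definition of $\ZQ$, and for the remaining objects it is a straightforward verification using their positions in $\ZQ$ relative to the section $\{x_k , k \in I\}$. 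Once this is granted, the identity $\mathrm{Cone}(u_{\bullet}^{\beta}) = C_{\bullet} \otimes \mathbf{H}_i$ is formal, which is why the statement is immediate.
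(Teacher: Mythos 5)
The paper offers no argument for this lemma (it is introduced with ``The following is immediate''), so there is no proof to compare against line by line; your write-up supplies the bookkeeping the author suppressed, and its strategy --- exhibit a $Y(x_i)$ tensor factor in every homological degree of both the domain and the codomain of $u_{\bullet}^{\beta}$, starting from the isomorphism $K_i^{\otimes \epsilon_i} \otimes Y[\beta - \boldsymbol{\dim}I_i^{\beta}] \simeq \Ybeta \otimes Y(x_i)$ of Lemma~\ref{lem : isom between leading objects}, then check that every differential and every component of $u_{\bullet}^{\beta}$ restricts to the identity on that factor --- is the right one and is exactly what Proposition~\ref{prop : Cbeta as mapping cone} presupposes when it writes $E_0 \simeq \Ybeta \otimes Y(x_i)$.

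One justification is stated with the wrong condition, although the conclusion survives. The tensor factor $Y(x_i)$ has underlying multiset $H(x_i)$, not $\{x_i\}$, so to see that a Serre tilting at $\tau x_j$, or a component $\eta_j = f_j \otimes g_j \otimes \mathrm{id}$, leaves this factor untouched, it is not enough that $x_i$ differs from $\tau x_j$ and from the auxiliary object $x$: what you need is that the slots being acted upon can be taken outside the sub-multiset $H(x_i)$. For $\tau x_j$ this is automatic because $\tau x_j \notin H(x_i)$: by~\eqref{eq : isom def of Serre functor}, $\mathrm{Hom}_{\DbRepQ}(x_i, \tau x_j) \simeq D\,\mathrm{Hom}_{\DbRepQ}(\tau x_j, \Sigma \tau x_i) = D\,\mathrm{Ext}^1_{\mathcal{C}}(\tau x_j, \tau x_i) = 0$ since $\tau x_j$ is projective in the heart $\mathcal{C}$ of Lemma~\ref{lem : intersection of hammocks}. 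For the auxiliary object $x$ the membership $x \in H(x_i)$ is not obviously excluded, but this does not matter: Lemma~\ref{lem : preliminary for def of eta} guarantees that the complementary factor already contains copies of $\tau x_j$ and of $x$, and since the morphism spaces~\eqref{eq : def of morphisms in CQ} track individual slots through bijections of index sets, the components $\eta_j \otimes \mathrm{id}$ can consistently be chosen to act on those copies. With the justification repaired in this way, your argument goes through and the factorization $\mathrm{Cone}(u_{\bullet}^{\beta}) \simeq C_{\bullet} \otimes \mathbf{H}_i$ is indeed formal.
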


 In view of Lemma~\ref{lem  : renorm by H_i}, we can define $\Cbeta$ as the (unique up to isomorphism) chain complex in $\KQ^{(1)}$ such that 
 $$ \Cbeta \otimes \mathbf{H}_i \simeq \mathrm{Cone}(u_{\bullet}^{\beta} ) . $$
 We now check that $\Cbeta$ satisfies the expected homological properties. 

  \begin{prop} \label{prop : Cbeta as mapping cone}
     The complex $\Cbeta$ is strongly $\HQ^{(1)}$-exact in the sense of Definition~\ref{def : strong exactness}. Moreover, its degree zero object is isomorphic to $\Ybeta$.   
  \end{prop}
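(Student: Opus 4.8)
The plan is to argue by induction on the height of $\beta$, using the construction of $\Cbeta$ as (a renormalization by $\mathbf{H}_i$ of) the mapping cone of the morphism $u_{\bullet}^{\beta}$ from Proposition~\ref{prop : morphism of chain complexes}. First I would record the easy structural facts: by Lemma~\ref{lem : isom between leading objects} the degree-$1$ object (after the shift $[-1]$) of the domain of $u_{\bullet}^{\beta}$ is $K_i^{\otimes \epsilon_i} \otimes Y[\beta - \boldsymbol{\dim} I_i^{\beta}]$, and by Proposition~\ref{prop : Serre tilting of leading object} the degree-$0$ object of the codomain is $F(i,\beta) \otimes K(i,\beta) \otimes H(i,\beta) \otimes Y[\beta - \boldsymbol{\dim} P_i^{\beta}]$; in particular the cone has $C_0 = Y[\beta-\boldsymbol{\dim}P_i^{\beta}] \otimes (\text{frozen/}H\text{ factors})$ and after dividing out $\mathbf{H}_i$ (Lemma~\ref{lem  : renorm by H_i}) one checks directly, using $\boldsymbol{\dim} P_i^{\beta} = \sum_{i \rightsquigarrow j}\alpha_j$ and the definition~\eqref{eq : def of Ybeta leading} of $\Ybeta$, that the degree-zero object of $\Cbeta$ is exactly $\Ybeta$. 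The vanishing of $\Cbeta$ in negative degrees is immediate since both $C_{\bullet}$ and $D_{\bullet}$ in Proposition~\ref{prop : morphism of chain complexes} vanish in negative degrees and the mapping cone of a morphism $f_{\bullet}\colon X_{\bullet}[-1]\to Y_{\bullet}$ lives in degrees $\geq 0$ when $X_0 = Y_{<0} = 0$.

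The substance is the strong $\HQ^{(1)}$-exactness. The strong part is cheap: every differential of $\mathrm{Cone}(u_{\bullet}^{\beta})$ is built out of the differentials of $\mathbf{K}_i^{\otimes\epsilon_i}\otimes C_{\bullet}[\beta-\boldsymbol{\dim}I_i^{\beta}]$, of $\mathbf{F}(i,\beta)\otimes\mathbf{H}(i,\beta)\otimes\mathbf{K}(i,\beta)\otimes C_{\bullet}[\beta-\boldsymbol{\dim}P_i^{\beta}]$, and of the components of $u_{\bullet}^{\beta}$ itself; by the induction hypothesis (both $C_{\bullet}[\beta-\boldsymbol{\dim}I_i^{\beta}]$ and $C_{\bullet}[\beta-\boldsymbol{\dim}P_i^{\beta}]$ are strongly $\HQ^{(1)}$-exact), by Lemma~\ref{lem : product by frozen still exact}, and by the fact (established inside the proof of Proposition~\ref{prop : morphism of chain complexes}) that all non-trivial components of $u_{\bullet}^{\beta}$ are of the form $\eta_i\otimes\mathrm{id}$, every such component is of the form $\eta_j\otimes\mathrm{id}$ with $j\in\mathrm{Supp}(\beta)=\mathrm{Supp}(\Ybeta)$; tensoring by $\mathbf{H}_i^{-1}$ does not affect this. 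The main work is the factorization condition of Definition~\ref{def : CQ1 exact chain complexes}. Here I would use the standard long-exact-type behaviour of mapping cones together with the exactness hypotheses on the two input complexes: given $n>0$, a finite $Z\subset\ZQz$ with $Z\cap\mathrm{Supp}(\Ybeta)\neq\emptyset$, and $f\colon C_n[\beta]\to\mu_Z(\Ybeta)$ with $f\circ d_{n-1}^{C_{\bullet}[\beta]}=0$, I decompose $C_n[\beta]$ (equivalently $\mathrm{Cone}(u_{\bullet}^{\beta})_n$) into the part coming from the shifted domain of $u_{\bullet}^{\beta}$ and the part coming from its codomain, restrict $f$ to each summand, and push the vanishing condition through: on the codomain summand, $\HQ^{(1)}$-exactness of $D_{\bullet}$ (i.e. of the tensor product of frozen complexes with $C_{\bullet}[\beta-\boldsymbol{\dim}P_i^{\beta}]$, which is $\HQ^{(1)}$-exact by Lemma~\ref{lem : product by frozen still exact} and the induction hypothesis) gives the factorization; on the domain summand one uses $\HQ^{(1)}$-exactness of $C_{\bullet}[\beta-\boldsymbol{\dim}I_i^{\beta}]$; and the interaction term is controlled by $u_{\bullet}^{\beta}$ being a morphism of complexes and by Theorem~\ref{thm : existence of morphisms eta}(ii), which is precisely the tool that converts "composition with $\eta_i$ vanishes" into "factors through the relevant $\eta_j$'s".

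The step I expect to be the main obstacle is exactly this factorization through the mapping-cone differential: one must check that the matrix description of $d_n^{\mathrm{Cone}(u_{\bullet}^{\beta})}$ — which mixes $d_n^{C_{\bullet}}$, $d_{n-1}^{D_{\bullet}}$ (with sign) and the cone component of $u_{\bullet}^{\beta}$ — is compatible with the two separate factorizations produced on the two summands, so that they glue to a single morphism $C_{n+1}[\beta]\to\mu_Z(\Ybeta)$. This is where the hypothesis $i\in I(\beta)$ (minimality of $a_i$ and of $r_i^{\beta}$, from Section~\ref{sec : R and i(beta)}) is used: it guarantees that $\mathrm{Supp}(\beta-\boldsymbol{\dim}I_i^{\beta})$ and $\mathrm{Supp}(\beta-\boldsymbol{\dim}P_i^{\beta})$ both still contain the relevant indices, so that the inductive exactness statements apply to the objects $\mu_Z(\Ybeta)$ that actually occur, and Theorem~\ref{thm : existence of morphisms eta}(i)--(ii) can be invoked with the correct set $\{j : j\rightsquigarrow i\}$. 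Once the gluing is checked degree by degree, the uniqueness part of the statement follows from Lemma~\ref{lem :uniqueness up to isom} (strong $\HQ^{(1)}$-exact complexes are determined up to isomorphism by $C_0$), completing the induction and hence the proof of Theorem~\ref{thm : chain complexes Cbeta}.
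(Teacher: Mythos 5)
Your proposal follows essentially the same route as the paper: identify $C_0[\beta]$ via Lemma~\ref{lem : isom between leading objects}, get strong exactness cheaply from the form $\eta_j\otimes\mathrm{id}$ of all new differential components, and prove the factorization condition by splitting a test morphism according to the mapping-cone decomposition $E_n\simeq C_{n+1}\oplus D_n$, using the matrix form of $d_n^{E_\bullet}$, the $\HQ^{(1)}$-exactness of both inputs (via Lemma~\ref{lem : product by frozen still exact} and induction), and the chain-map property of $u_\bullet^{\beta}$ to absorb the interaction term. The paper additionally passes through $g:=f\otimes\mathrm{id}_{Y(x_i)}$ on $\Cbeta\otimes\mathbf{H}_i$ and splits into cases according to whether $\tau x_i\in Z$, but these are refinements of the gluing step you already flag as the main point, not a different argument.
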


  \begin{proof}
 Denote respectively by $C_{\bullet}$  and $D_{\bullet}$ the domain and codomain of $u_{\bullet}^{\beta}$ and by $E_{\bullet}$ its mapping cone. In particular we have that $E_n \simeq 0$ for all $n<0$ and also
 $$ E_0 \simeq  C_1 \simeq \Ybeta \otimes Y(x_i) $$
by Lemma~\ref{lem : isom between leading objects} so that $C_0[\beta]$ is indecomposable dominant and isomorphic to $\Ybeta$. Note also that by Lemma~\ref{lem : product by frozen still exact},  both $C_{\bullet}$ and $D_{\bullet}$ are (up to homological shift) $\HQ^{(1)}$-exact in the sense of Definition~\ref{def : CQ1 exact chain complexes}.
 Now let $n>0$, let $Z$ be a finite subset of $\ZQz$ such that $Z \cap \mathrm{Supp}(\beta) \neq \emptyset$ and assume we have a non-trivial morphism $f : C_n[\beta] \rightarrow \mu_Z(C_0[\beta])$ such that $f \circ d_{n-1}^{\Cbeta}= 0$. We claim that $f$ factors through $d_n^{\Cbeta}$. Indeed, $g := f \otimes \mathrm{id}_{Y(x_i)}$ is a morphism $E_n  \rightarrow \mu_Z(E_0)$ satisfying $g \circ d_{n-1}^{E_{\bullet}} = 0$. Recall that by definition of $E_{\bullet}$ we have 
 $$ E_n \simeq C_{n+1} \oplus D_n \quad \text{and} \quad d_{n}^{E_{\bullet}} = \begin{pmatrix}
     d_{n+1}^{C_{\bullet}} & 0 \\
     u_{n+1} & - d_n^{D_{\bullet}} . 
 \end{pmatrix} $$
 Thus writing $g = (h \enspace k)$ with $h : C_{n+1} \rightarrow \mu_Z(E_0)$ and $k : D_n \rightarrow \mu_Z(E_0)$ we get 
  \begin{equation} \label{eq : vanishing identities}
 h d_n^{C_{\bullet}} + k u_n^{\beta} = 0 \qquad k d_{n-1}^{D_{\bullet}} = 0 . 
   \end{equation}
Any indecomposable summand of $C_n$  is a Serre tilting of $C_1$, and this also true for any indecomposable summand of $D_n$ because of Lemma~\ref{lem : isom between leading objects} and Proposition~\ref{prop : Serre tilting  of leading object}. So, if $ \tau x_i \notin Z$ then necessarily $k=0$. As $f$ was assumed non trivial, we must have $h \neq 0$ so necessarily  $Z \cap \mathrm{Supp}(C_1) \neq 0$. Therefore $h \circ d_n^{C_{\bullet}} = 0$ implies that $h$ factors through $d_{n+1}^{C_{\bullet}}$ and so the claim holds in that case. If on the contrary $\tau x_i \in Z$, then we may write $Z = \{\tau x_i \} \sqcup Z'$.  As $Z \cap \mathrm{Supp}(\beta- \alpha_i) = Z \cap D_1 \neq \emptyset$, the second identity from~\eqref{eq : vanishing identities} implies that $k$ factors through $d_n^{D_{\bullet}}$ (by $\HQ^{(1)}$ exactness of $D_{\bullet}$ so we write $k = k' \circ d_n^{D_{\bullet}}$. Plugging this into the first identity of~\eqref{eq : vanishing identities} we obtain 
 $$ h \circ d_n^{C_{\bullet}} = - k' \circ d_n^{D_{\bullet}} \circ u_n^{\beta} = - k' \circ u_{n+1}^{\beta} \circ d_n^{C_{\bullet}}  $$
 so that $(h + k'u_{n+1}^{\beta}) \circ d_n^{C_{\bullet}} = 0 $. If $Z' \cap \mathrm{Supp}(C_1) \neq  \emptyset $ then the $\HQ^{(1)}$ exactness of $C_{\bullet}$ implies that $h + k'u_{n+1}^{\beta} = h'd_{n+1}^{C_{\bullet}}$  for some morphism $h'$ and finally we obtain $g = g' \circ d_n^{E_{\bullet}}$ where $g' := (h' \enspace k')$. If on the contrary $Z' \cap \mathrm{Supp}(C_1) = \emptyset$ then  the strong $\HQ^{(1)}$ exactness of $C_{\bullet}$ implies that necessarily $h=0$ so we can take $h'=0$ and the claim holds as well in that case.

  Finally, the fact that $\Cbeta$ is strongly $\HQ^{(1)}$ exact follows from the fact that both $C_{\bullet}$ and $D_{\bullet}$ are strongly  $\HQ^{(1)}$ exact and that the components of the differentials of $\Cbeta$ not appearing in those of $C_{\bullet}$ or $D_{\bullet}$ are of the form $\eta_i \otimes \mathrm{id}$ (by construction of $\Cbeta$). 
  \end{proof}

    \section{$q$-characters as Euler characteristics}
    \label{sec : q characters as Euler char}

 For every vertex $x := (i,p) \in \ZQz$, we denote by $Y_{i,p}$ the isomorphism class of the indecomposable object $Y(x)$ in the Grothendieck group (ring) of $\HQ$.  In particular, with the notations of Section~\ref{sec : the subcategory CQone}, we have in $K_0(\HQ^{(1)})$
 $$ [Y(x_i)] =  Y_{i,\xi(i)}  \qquad [Y(\tau x_i)] = Y_{i, \xi(i)-2}   $$
 for each $i \in I$.  We also denote by $f_i := [F(\tau x_i)] \in K_0(\HQ^{(1)})$ for every $i \in I$. 
With these notations, we can state the second main result of this paper. 

 \begin{thm}  \label{thm : Euler characteristics vs q-characters}
     Assume $\mathfrak{g}$ is of type $A_n, n \geq 1$ or $D_n, n \geq 4$ and let $Q$ be an arbitrary orientation of the Dynkin diagram of $\mathfrak{g}$. Then for each $\beta \in \bdelta_+ \sqcup \Pi_-$, we have that 
     $$ \chi \left( C_{\bullet}[\beta]  \right)_{| \forall i, f_i := -1} = \tchi_q \left( L[\beta] \right) .  $$
 \end{thm}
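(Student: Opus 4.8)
The plan is to prove the identity by induction on the height of $\beta$, matching at each step the iterated mapping-cone construction of $\Cbeta$ from Section~\ref{sec : construction of Cbeta} with an exchange relation of $\AQ$, the latter realized through Theorem~\ref{thm : thm HL/BC for C1} (extended to general $\gamma$ in the positive orthant via the Kashiwara--Kim--Oh--Park cluster-monomial results) as a short exact sequence among the simple modules $L[\gamma]$ in $\Cxi^{(1)}$. As a preliminary I would construct the ring homomorphism $\iota_Q\colon K_0(\HQ^{(1)})\to\YZ[F_i,i\in I]$ by sending the class $[Y(x)]=Y_{i,p}$ with $x=(i,p)\in\ZQz$ to the corresponding variable of $\YZ$, and $f_i=[F(\tau x_i)]$ to $F_i$. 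Its value on a Serre tilting is then forced: passing to $K_0(\HQ^{(1)})$, Lemma~\ref{lem : mutation for objects in CQ} reads $[\mu_iY(\tau x_i)]\,Y_{i,\xi(i)}=f_i\prod_{j\sim i}Y_{j,\xi(i)-1}$, so $\iota_Q$ sends $\mu_i$ to multiplication by $F_i\,A_{i,\xi(i)-1}^{-1}$ up to the class of the relevant hammock object. Well-definedness and injectivity follow from the linear independence of hammock functions (Lemma~\ref{lem : linear independece of hammock functions}) together with the algebraic independence of the $Y_{i,p}$ in $\YZ$; throughout, $\chi(\Cbeta)_{|\forall i,\,f_i:=-1}$ denotes $\iota_Q(\chi(\Cbeta))_{|F_i=-1}$.

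The base cases are $\beta=-\alpha_i$ and $\beta$ of height $1$. For $\beta=-\alpha_i$ one has $\Cbeta=\mathbf{H}_i$, concentrated in degree $0$ with $C_0[\beta]=Y(x_i)$, so $\iota_Q(\chi(\Cbeta))_{|F=-1}=Y_{i,\xi(i)}$; and $\tchi_q(L(Y_{i,\xi(i)}))=Y_{i,\xi(i)}$ as well, since for the fundamental module sitting at the top of the height function every non-dominant monomial of its $q$-character involves a variable $Y_{j,p}$ with $p>\xi(j)$ and is therefore killed by the truncation. For $\beta=\alpha_i$ the complex $\Cbeta$ is the mapping cone of $u_{\bullet}^{\alpha_i}$ over the trivial complexes $C_{\bullet}[0]$ (of Euler characteristic $1$), and the identity follows by a direct computation using the $K_0$-relation above and the exchange relation at the vertex $i$ of the initial seed of $\AQ$.

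For the inductive step, fix $\beta$ of height $r\ge1$ and $i\in I(\beta)$ as in Section~\ref{sec : R and i(beta)}. Taking Euler characteristics in the isomorphism $\Cbeta\otimes\mathbf{H}_i\simeq\mathrm{Cone}(u_{\bullet}^{\beta})$ of Lemma~\ref{lem  : renorm by H_i} and Proposition~\ref{prop : morphism of chain complexes}, and using that $\chi$ is additive on distinguished triangles and changes sign under a homological shift, one obtains in $K_0(\HQ^{(1)})$ an identity of the shape
$$\chi(\Cbeta)\cdot[Y(x_i)]=(-1)^{m}\Bigl(\prod_{j\in\mathrm{out}_{\Qbeta}(i)}f_j\Bigr)[H(i,\beta)]\,[K(i,\beta)]\;\chi\bigl(C_{\bullet}[\beta-\boldsymbol{\dim}P_i^{\beta}]\bigr)\;+\;[K_i]^{\epsilon_i}\,\chi\bigl(C_{\bullet}[\beta-\boldsymbol{\dim}I_i^{\beta}]\bigr),$$
where $m=\sharp\,\mathrm{out}_{\Qbeta}(i)$ is the homological degree of $\mathbf{F}(i,\beta)$. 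Applying $\iota_Q$ and specialising all $F_j=-1$ turns the prefactor $(-1)^m\prod_jf_j$ into $(-1)^m(-1)^m=1$, so by the induction hypothesis the right-hand side becomes $\iota_Q([H(i,\beta)])_{|F=-1}\iota_Q([K(i,\beta)])_{|F=-1}\,\tchi_q(L[\beta-\boldsymbol{\dim}P_i^{\beta}])+\iota_Q([K_i])^{\epsilon_i}\,\tchi_q(L[\beta-\boldsymbol{\dim}I_i^{\beta}])$, while the left-hand side is $\iota_Q(\chi(\Cbeta))_{|F=-1}\cdot Y_{i,\xi(i)}$. It remains to recognise this as the exchange relation of $\AQ$ (equivalently the defining short exact sequence in $\Cxi^{(1)}$) attached to the mutation producing $x[\beta]$ from $x[\beta-\boldsymbol{\dim}P_i^{\beta}]$ and $x[\beta-\boldsymbol{\dim}I_i^{\beta}]$: here $\iota_Q([H(i,\beta)])\iota_Q([K(i,\beta)])$ and $\iota_Q([K_i])^{\epsilon_i}$ are precisely the (frozen) monomials prescribed by that mutation and $Y_{i,\xi(i)}=\tchi_q(L[-\alpha_i])$, so Theorem~\ref{thm : thm HL/BC for C1} identifies $\iota_Q(\chi(\Cbeta))_{|F=-1}$ with the cluster expansion of $x[\beta]$, hence with $\tchi_q(L[\beta])$. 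The uniform formula for the dominant monomial of $L[\beta]$ then follows at once: every object of $\Cbeta$ in positive homological degree is a Serre tilting of $C_0[\beta]=\Ybeta$, hence is sent by $\iota_Q$ (at $F=-1$) to a Laurent monomial strictly below $\iota_Q([\Ybeta])=\prod_{i\in I}Y_{i,\xi(i)-2}^{\,c_i}Y_{i,\xi(i)}^{\,d_i}$ in the Nakajima order, so this monomial is the dominant monomial of $L[\beta]$, with $b_i=a_i-\sum_{i\to j}a_j$, $c_i=(b_i)_+$, $d_i=(b_i)_-$ as in Section~\ref{sec : (co)leading objects}; in type $A_n$ this recovers Brito--Chari's formula.

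The main obstacle, I expect, is precisely the identification of the homologically-defined recursion with genuine exchange relations of $\AQ$: one must check that for every positive root $\beta$ the pivot $i\in I(\beta)$, together with the pair $(\beta-\boldsymbol{\dim}P_i^{\beta},\,\beta-\boldsymbol{\dim}I_i^{\beta})$, corresponds to an actual mutation edge of $\AQ$, that the coefficients $[H(i,\beta)][K(i,\beta)]$ and $[K_i]^{\epsilon_i}$ are exactly the frozen monomials attached to that mutation (which is where the bookkeeping of Remark~\ref{rk : frozens in abelian monoidal categorifications} and the $R$-matrix criteria of Theorems~\ref{thm : KKOP} and~\ref{thm : Fujita}, needed to produce the relevant short exact sequence in $\Cxi^{(1)}$, enter), and that this can be arranged uniformly over all positive roots. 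The restriction to types $A_n$ and $D_n$ should enter at exactly this point, through the explicit combinatorics of $\boldsymbol{\dim}P_i^{\beta}$, $\boldsymbol{\dim}I_i^{\beta}$ and of the resulting dominant monomials.
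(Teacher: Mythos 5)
Your reduction of the theorem to a three-term identity, by taking Euler characteristics in the distinguished triangle $D_{\bullet}\rightarrow \Cbeta\otimes\mathbf{H}_i\rightarrow C_{\bullet}\rightarrow D_{\bullet}[1]$ and cancelling the sign $(-1)^{m}\prod_{j}f_j$ at $f_j=-1$, matches the paper's starting point, and the base cases are handled correctly. But the step you defer --- ``it remains to recognise this as the exchange relation of $\AQ$'' --- is the entire content of the proof, and you do not supply it. The paper does not obtain the corresponding identity of truncated $q$-characters by matching the recursion with a mutation of $\AQ$ and then invoking Theorem~\ref{thm : thm HL/BC for C1}. It proves the identity of Proposition~\ref{prop : identity of q-characters} directly on the quantum affine side: Lemma~\ref{lem : monomial in q-character} produces a distinguished non-dominant ``marker'' monomial that must occur in $\tchi_q(L(\mbeta))$; Propositions~\ref{prop : tensor product is not simple} and~\ref{prop : length two} use this marker together with the invariants $\bdelta$ of Theorem~\ref{thm : KKOP} and Fujita's criterion (Theorem~\ref{thm : Fujita}) to show that $L(\mbeta)\otimes L(Y_{x_i})$ has length exactly two; and the proof of Proposition~\ref{prop : identity of q-characters} identifies the second composition factor explicitly as $L(\mbeta Y_{x_i}A_i^{-1})$ and factors its highest $\ell$-weight as the prescribed product. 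Only after the two recursions are matched does the paper conclude, as a consequence, that $[L(\mbeta)]$ is the cluster monomial of $d$-vector $\beta$; the dominant monomial formula is an output of the argument, not an input.

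Your alternative route is not merely unfinished; it faces a real obstruction. The entries $\beta-\boldsymbol{\dim}P_i^{\beta}$ and $\beta-\boldsymbol{\dim}I_i^{\beta}$ are general elements of the positive orthant, so the putative exchange relation is an identity among cluster \emph{monomials} with frozen coefficients $[H(i,\beta)][K(i,\beta)]$ and $[K_i]^{\epsilon_i}$, and one would have to prove that for every $\beta$ and every $i\in I(\beta)$ this data genuinely arises from a mutation edge of $\AQ$ (compatibility of all the cluster variables occurring in the two monomials with $x[\beta]$ and with each other, and correctness of the frozen parts), uniformly over arbitrary orientations in types $A_n$ and $D_n$. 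That verification is at least as hard as the statement being proven, and you give no argument for it beyond naming it as the expected obstacle. Until it is supplied, the induction does not close.
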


 The proof will consist in showing that the (truncated) $q$-characters of certain simple modules in the HL category $\Cxi^{(1)}$ satisfy certain three-term identities. This will be done by establishing the existence of certain short exact sequences in $\Cxi^{(1)}$.

  We fix a simple Lie algebra $\mathfrak{g}$ of type $A_n , n \geq 1$ or $D_n , n \geq 4$ as well as a height function $\xi$ adapted to an (arbitrary) orientation $Q$ of the Dynkin diagram of $\mathfrak{g}$. We denote by $\theta$ the highest root of $\mathfrak{g}$. 
 Recall the notations $M(\beta)$ and $I(\beta)$ from Section~\ref{sec : R and i(beta)}.
 For each $\beta$ in the positive orthant of the root lattice we denote by $\mathfrak{m}_{\beta} \in \YZ$ the dominant monomial given by 
 $$ \mbeta := \prod_{i \in I} Y_{i, \xi(i)-2}^{c_i} Y_{i, \xi(i)}^{d_i} . $$
 We begin with the following lemma. 

 \begin{lem} \label{lem : monomial in q-character}
  Let $\beta \leq \theta$ and $i,j \in (\Qbeta)_0$ such that $j$ is a sink in $\Qbeta$ and $i \rightsquigarrow j$ in $Q$. Assume also that $a_i < a_j $ and $a_k = a_j$ for all $k$ such that $i \rightsquigarrow k \rightsquigarrow j$. 
  Then the Laurent monomial 
 \begin{equation} \label{eq : monomial in L(mbeta)}
    \mbeta \times  A_i^{-1} \prod_{ \substack{k \neq i  \\ i \rightsquigarrow k \rightsquigarrow j}  }A_k^{- a_j}
 \end{equation}    
 appears in the  (truncated) $q$-character of $L(\mbeta)$. 
 \end{lem}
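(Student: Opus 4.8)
The plan is to exhibit the Laurent monomial $\mbeta\cdot A_i^{-1}\prod_k A_k^{-a_j}$ as the endpoint of an explicit chain of Laurent monomials lying in $\chi_q(L(\mbeta))$, each obtained from the previous one by a single multiplication by some $A_{k,\xi(k)-1}^{-1}$, and then to transfer the statement to the truncated $q$-character: since $\mbeta$ as well as every $A_{k,\xi(k)-1}^{-1}$ only involve variables $Y_{l,p}$ with $p\le\xi(l)$, the resulting monomial is not removed by truncation, so it appears in $\tchi_q(L(\mbeta))$ as soon as it appears in $\chi_q(L(\mbeta))$. Recall also that $L(\mbeta)$ is a real simple object of $\Cxi^{(1)}$ by Remark~\ref{rk : frozens in abelian monoidal categorifications}, so by Theorem~\ref{thm : thm HL/BC for C1} its truncated $q$-character is a cluster expansion in $\AQ$; this gives an alternative route via cluster positivity which I would keep in reserve.

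First I would record the combinatorics. By hypothesis there is a path $i=i_0\to i_1\to\cdots\to i_m=j$ in $Q$, unique because any two vertices of a Dynkin quiver are joined by at most one oriented path, with $a_{i_\ell}=a_j$ for $1\le\ell\le m$ while $a_{i_0}=a_i<a_j$, and $\xi(i_\ell)=\xi(i)-\ell$. Since $j$ is a sink of $\Qbeta$ one computes $b_j=a_j$, so $\mbeta$ carries $Y_{j,\xi(j)-2}^{a_j}$, a dominant $\widehat{\mathfrak{sl}}_2$-string at $j$ placed at height $\xi(j)-2$; and $a_i<a_j$ gives $b_i<0$, whence $c_i=0$ and $d_i\ge a_j-a_i\ge 1$, so $\mbeta$ carries $Y_{i,\xi(i)}^{d_i}$ with $d_i>0$. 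The chain propagates from the sink back toward $i$: one first lowers $Y_{j,\xi(j)-2}^{a_j}$ by $a_j$ successive moves $A_{j,\xi(j)-1}^{-1}$ (legitimate, as this is exactly the $\widehat{\mathfrak{sl}}_2$-string of $L(Y_{j,\xi(j)-2}^{a_j})$); each such move creates a factor $Y_{i_{m-1},\xi(i_{m-1})-2}$, so that after these $a_j$ moves a dominant string of that shape sits at $i_{m-1}$, which one then lowers by $A_{i_{m-1},\xi(i_{m-1})-1}^{-1}$, interleaving when necessary with the moves further up the path so that the positive exponents needed at each vertex remain available, and so on, the last move being $A_{i,\xi(i)-1}^{-1}$ — legitimate precisely because $d_i\ge1$. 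That the total multiset of moves is exactly $\{A_i^{-1}\}\cup\{A_{i_\ell}^{-a_j}:1\le\ell\le m\}$ is a direct exponent count (for instance, in type $D_5$ with $Q:1\to2\to3\to4$, $3\to5$, $\xi(1)=0$ and $\beta=\alpha_1+2\alpha_2+2\alpha_3$ one has $\mbeta=Y_{1,0}Y_{3,-4}^2$, the chain reads $A_{3,-3}^{-1},A_{3,-3}^{-1},A_{2,-2}^{-1},A_{2,-2}^{-1},A_{1,-1}^{-1}$, and its endpoint is $Y_{1,-2}Y_{2,-1}^{-1}Y_{4,-3}^2Y_{5,-3}^2$).

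The main obstacle, and the place where the hypothesis $\beta\le\theta$ enters decisively, is to guarantee that each move of the chain is genuinely forced by the Frenkel--Mukhin recursion applied to $L(\mbeta)$ — so that every monomial of the chain is a genuine monomial of $\chi_q(L(\mbeta))$ — and that the coefficient of the final monomial is not absorbed by smaller composition factors. For this it suffices to check that $L(\mbeta)$ is special, i.e.\ that $\mbeta$ is the unique dominant monomial of $\chi_q(L(\mbeta))$: the bound $\beta\le\theta$ forces $a_i\le 2$ for all $i$, hence bounds the exponents occurring in $\mbeta$ and keeps the whole chain finite (at most two ``waves''), and confines $\mbeta$ to a range in which the Frenkel--Mukhin algorithm is valid and no dominant monomial lies strictly below $\mbeta$. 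As a self-contained alternative avoiding specialness, one can locate the target monomial already in the standard module $M(\mbeta)=\bigotimes_l\bigl(L(Y_{l,\xi(l)-2})^{\otimes c_l}\otimes L(Y_{l,\xi(l)})^{\otimes d_l}\bigr)$, whose $q$-character is the explicit product of the known fundamental $q$-characters in types $A_n$ and $D_n$, and then use $\beta\le\theta$ to enumerate the few dominant monomials strictly below $\mbeta$ that occur in $\chi_q(M(\mbeta))$ together with their composition multiplicities, verifying that the contribution of the lower composition factors does not cancel the whole coefficient of the target monomial; I expect this route to be heavier computationally but more robust.
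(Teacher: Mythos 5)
Your primary route (forcing a chain of $A^{-1}$-moves via the Frenkel--Mukhin recursion) has a genuine gap exactly where you flag it: you need $L(\mbeta)$ to be special, i.e.\ that $\mbeta$ is the unique dominant monomial of $\chi_q(L(\mbeta))$, and you offer no argument beyond the assertion that $\beta\le\theta$ ``confines $\mbeta$ to a range in which the Frenkel--Mukhin algorithm is valid.'' That is not a proof: $\mbeta$ is a product of Kirillov--Reshetikhin-type factors at several vertices, which is not among the standard cases where the algorithm is known to be correct, and establishing specialness here is essentially as hard as the lemma itself. As written, this route does not close.

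Your reserve route is the one the paper actually takes, and it does work --- but the paper short-circuits the computation you anticipate. Write $\mbeta = Y_{\tau x_j}^{a}Y_{x_i}^{b}\mathfrak{m}'$ with $0\le b<a=a_j$ and $\mathfrak{m}'$ not divisible by $Y_{\tau x_k}$ or $Y_{x_k}$ for any $k$ with $i\rightsquigarrow k\rightsquigarrow j$; then $L(\mbeta)$ is a Jordan--H\"older constituent of $L(Y_{\tau x_j})^{\otimes a}\otimes L(Y_{x_i})^{\otimes b}\otimes L$, where $L$ is the standard module of highest $l$-weight $\mathfrak{m}'$, and the target monomial visibly occurs in the $q$-character of this tensor product (your explicit chain of moves is one way to see this). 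The key point, which replaces your proposed enumeration of dominant monomials below $\mbeta$ together with their composition multiplicities, is that there is \emph{no} dominant monomial $\mathfrak{m}$ satisfying
$$ \mbeta\, A_i^{-1}\prod_{\substack{k\neq i\\ i\rightsquigarrow k\rightsquigarrow j}}A_k^{-a_j}\ \preccurlyeq_N\ \mathfrak{m}\ \prec_N\ \mbeta . $$
Every other simple constituent has highest $l$-weight $\mathfrak{m}\prec_N\mbeta$, and a monomial can occur in $\chi_q(L(\mathfrak{m}))$ only if it is $\preccurlyeq_N\mathfrak{m}$; hence the target occurs in no constituent other than $L(\mbeta)$. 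Since $q$-characters have nonnegative coefficients, your worry about cancellation is moot, and the conclusion is immediate. So: keep the reserve route, drop the multiplicity bookkeeping, and prove the empty-interval statement instead.
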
 

  \begin{proof}
   The assumptions imply that $\mbeta$ can be written under the form 
   $$\mbeta =  Y_{\tau x_j}^{a} Y_{x_i}^b \mathfrak{m'} $$
   with $0 \leq b < a = a_j$ and $\mathrm{m}'$ is a dominant monomial that is not divisible by $Y_{\tau x_k}$ nor $Y_{x_k}$ for any $k$ such that $i \rightsquigarrow k \rightsquigarrow j$. The simple module $L(\mbeta)$ is then a Jordan-H\"older component of the tensor product $L(Y_{\tau x_j})^{\otimes a} \otimes L(Y_{x_i})^{\otimes b} \otimes L$ where $L$ is the standard module with highest $l$-weight $\mathfrak{m}'$. It is straightforward to check that there is no dominant monomial $\mathfrak{m}$ satisfying 
   $$   \mbeta \times  A_i^{-1} \prod_{ \substack{k \neq i  \\ i \rightsquigarrow k \rightsquigarrow j}  }A_k^{- a_j}  \enspace \preccurlyeq_N  \enspace  \mathfrak{m}  \enspace \prec_N \enspace \mbeta  $$
    where $\preccurlyeq_N$ denotes the Nakajima ordering. Therefore, ~\eqref{eq : monomial in L(mbeta)} cannot appear in the truncated $q$-character of any simple component of $L(Y_{\tau x_j})^{\otimes a} \otimes L(Y_{x_i})^{\otimes b} \otimes L$ other than $L(\mbeta)$.
  \end{proof}

   We now fix $\beta \in R$ and assume that the desired result has been established for all elements $\gamma \in R$ of height strictly smaller than that of $\beta$.

  \begin{prop} \label{prop : tensor product is not simple}
    Let $i := i(\beta)$. Then the tensor product $L(\mbeta) \otimes L(Y_{x_i})$ is not irreducible.
  \end{prop}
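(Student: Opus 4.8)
The plan is to argue by contradiction. Suppose $L(\mathfrak{m}_\beta)\otimes L(Y_{x_i})$ is irreducible. Since $\mathfrak{m}_\beta$ and $Y_{x_i}$ are dominant monomials supported on $\Ixi^{(1)}$, both factors lie in $\CxiZ$, on which $\tchi_q$ is a ring homomorphism; hence the product of the two highest $\ell$-weights is the highest $\ell$-weight of the tensor product, and irreducibility would force $L(\mathfrak{m}_\beta)\otimes L(Y_{x_i})\simeq L(\mathfrak{m}_\beta Y_{x_i})$. I would then derive a contradiction, splitting according to whether $c_i>0$ or $c_i=0$ --- equivalently, using $i=i(\beta)\in I(\beta)$ and the minimality of $a_i$ on $\mathrm{Supp}(\beta)$, according to whether or not $i$ is a sink of $\Qbeta$. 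Indeed, if $i$ were not a sink of $\Qbeta$, every out-neighbour $j$ of $i$ in $\Qbeta$ would satisfy $a_j>a_i$ (otherwise $j\in M(\beta)$ with $r_j^\beta<r_i^\beta$, contradicting $i\in I(\beta)$), so $b_i=a_i-\sum_{i\to j}a_j<0$ and $c_i=0$; if $i$ is a sink of $\Qbeta$ then $b_i=a_i\ge 1$ and $c_i=a_i$.

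In the case $c_i=a_i\ge 1$ (the only one occurring in type $A_n$), $\mathfrak{m}_\beta$ is divisible by $Y_{\tau x_i}$ but not by $Y_{x_i}$, so $\mathfrak{m}_\beta Y_{x_i}$ is divisible by $Y_{\tau x_i}Y_{x_i}=Y_{i,\xi(i)-2}Y_{i,\xi(i)}$ with dominant quotient. This monomial is exactly the dominant monomial of the Kirillov--Reshetikhin module categorifying the frozen variable $X_i$, which by Remark~\ref{rk : frozens in abelian monoidal categorifications} strongly commutes with every simple of $\Cxi^{(1)}$; hence $L(\mathfrak{m}_\beta Y_{x_i})$ would factor as $L(Y_{\tau x_i}Y_{x_i})\otimes N$ for a simple $N$, giving $[L(\mathfrak{m}_\beta)]\cdot x_i=X_i\cdot[N]$ in $\AQ\simeq K_0(\Cxi^{(1)})$. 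This is impossible: $x_i$ is an initial cluster variable, and $[L(\mathfrak{m}_\beta)]$ is the class of a simple module, hence a cluster monomial; since $Y_{x_i}$ does not divide $\mathfrak{m}_\beta$ it is not of the form $[L(Y_{\tau x_i}Y_{x_i})\otimes(\text{simple})]$, so neither factor on the left is divisible by the frozen variable $X_i$ in the ambient Laurent ring containing $\AQ$, whereas the right-hand side is.

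In the remaining case $c_i=0$ (which occurs only in type $D_n$) the frozen mechanism is unavailable, and I would instead invoke Lemma~\ref{lem : monomial in q-character}. The whole point of the choice $i=i(\beta)\in I(\beta)$ is that it admits a sink $j$ of $\Qbeta$ with $i\rightsquigarrow j$, $a_i<a_j$, and $a_k=a_j$ for all $k$ with $i\rightsquigarrow k\rightsquigarrow j$, so that $\mathfrak{p}_0:=\mathfrak{m}_\beta A_i^{-1}\prod_{k\neq i,\ i\rightsquigarrow k\rightsquigarrow j}A_k^{-a_j}$ occurs in $\tchi_q(L(\mathfrak{m}_\beta))$. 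Multiplying by the highest monomial $Y_{x_i}$ of $L(Y_{x_i})$ and using $A_i=Y_{\tau x_i}Y_{x_i}\prod_{l\sim i}Y_{l,\xi(i)-1}^{-1}$, one checks that $\mathfrak{p}_0Y_{x_i}$ is a \emph{dominant} monomial, distinct from $\mathfrak{m}_\beta Y_{x_i}$ and still lying inside the truncation window $p\le\xi$; hence it occurs in $\tchi_q(L(\mathfrak{m}_\beta)\otimes L(Y_{x_i}))$. Were this tensor product equal to $L(\mathfrak{m}_\beta Y_{x_i})$, this would contradict the fact that the simple objects of $\Cxi^{(1)}$ are special (their $q$-characters have a unique dominant monomial); alternatively, identifying $\mathfrak{p}_0Y_{x_i}$ with $\mathfrak{m}_\gamma$ for a suitable $\gamma$ of strictly smaller height and using the induction hypothesis of Theorem~\ref{thm : Euler characteristics vs q-characters}, together with the minimality properties of $i(\beta)$ (which prevent any other composition factor from carrying this monomial), produces a second composition factor and hence again reducibility.

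I expect the main obstacle to be the combinatorial verification in the case $c_i=0$: one has to show that the negative powers coming from the telescoping product $A_i^{-1}\prod_k A_k^{-a_j}$, after multiplication by $Y_{x_i}$, are cancelled term by term by positive powers already present in $\mathfrak{m}_\beta$ --- which is precisely where the constancy $a_k=a_j$ along the path $i\rightsquigarrow j$, and hence the exact definition of $I(\beta)$, is used --- that $\mathfrak{p}_0Y_{x_i}$ remains below $\xi$, and that the simple component it detects is genuinely different from $L(\mathfrak{m}_\beta Y_{x_i})$. By contrast the case $c_i>0$ (covering all of type $A_n$) is a short divisibility argument once the frozen-module mechanism of Remark~\ref{rk : frozens in abelian monoidal categorifications} is in place.
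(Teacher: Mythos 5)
Your overall strategy (reduce to $L(\mbeta)\otimes L(Y_{x_i})\simeq L(\mbeta Y_{x_i})$, split according to whether $i$ is a sink of $\Qbeta$, and use Lemma~\ref{lem : monomial in q-character} together with the induction hypothesis) matches the paper's, and your observation that for $i\in I(\beta)$ one has $c_i>0$ exactly when $i$ is a sink of $\Qbeta$ is correct. However, both of your contradiction mechanisms have gaps. In the sink case you deduce $[L(\mbeta)]\cdot x_i=X_i\cdot[N]$ and declare this impossible because the left-hand side is ``not divisible by the frozen variable $X_i$''; that a cluster monomial which does not visibly contain $X_i$ as a tensor factor is not divisible by $X_i$ in the ambient Laurent ring is true but nontrivial (it amounts to the existence of a coefficient-free Laurent monomial in the initial-seed expansion, i.e.\ the constant term $1$ of the $F$-polynomial) and is nowhere established in the paper. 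The paper avoids this: from $\tchi_q(L(\mbeta))\,Y_{x_i}=Y_{\tau x_i}Y_{x_i}\,\tchi_q(L(\mathfrak{m}'))$ it exhibits the monomial $\mbeta A_i^{-1}$, which occurs on the left by Lemma~\ref{lem : monomial in q-character} but cannot occur on the right because of the shape of $\mathfrak{m}'$. Note also that the paper runs the sink argument only for $a_i=1$; the case where all coefficients of $\beta$ equal $2$ (where $i$ is again a sink) is treated separately by writing $\beta=2\gamma$, $\mbeta=\mathfrak{m}_\gamma^2$, and using $\bdelta(L(Y_{x_i}),L(\mathfrak{m}_\gamma)^{\otimes 2})=2\,\bdelta(L(Y_{x_i}),L(\mathfrak{m}_\gamma))\neq 0$ from Theorem~\ref{thm : KKOP}; your uniform treatment would have to justify the divisibility claim for $a_i=2$ as well.

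The more serious problem is in the non-sink case ($c_i=0$). Your primary argument is that $\mathfrak{p}_0Y_{x_i}$ is a second dominant monomial in $\tchi_q(L(\mbeta)\otimes L(Y_{x_i}))$, ``contradicting the fact that the simple objects of $\Cxi^{(1)}$ are special.'' Specialness of these simples is not asserted anywhere in the paper, and in general the $q$-character of a simple module may contain several dominant monomials; likewise, the mere presence of an extra dominant monomial in $\chi_q(M\otimes N)$ does not by itself produce a second composition factor, so your ``alternative'' phrasing does not close the argument either. The paper's route is different and does close: it factors $\mbeta Y_{x_i}=\mathfrak{m}_\gamma\mathfrak{m}''$ with $\gamma=\beta-\sum_{j\rightsquigarrow i}\alpha_j$ of smaller height, uses the induction hypothesis and $i\notin\mathrm{Supp}(\gamma)$ to see that the renormalized monomial $(A_j^{-1}\cdots A_{i'}^{-1})^2A_i^{-1}$ is absent from $\tchi_q(L(\mathfrak{m}_\gamma))$ and from every fundamental factor of $\mathfrak{m}''$, hence from $\tchi_q(L(\mathfrak{m}_\gamma\mathfrak{m}''))$, while Lemma~\ref{lem : monomial in q-character} forces it to be present in $\tchi_q(L(\mbeta))$. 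You should replace your ``specialness'' step by this absence/presence comparison.
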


\begin{proof}

 Assume $i$ is a sink in $\Qbeta$ with $a_i= 1$. Then $\mbeta$ is of the form $Y_{\tau x_i} \mathfrak{m}'$ where $\mathfrak{m}'$ involves only $ Y_{x_j} Y_{x_j} , j \neq i$.  If $L(\mbeta) \otimes L(Y_{i, \xi(i)})$ were simple then we would have 
 $$ L(\mbeta) \otimes L(Y_{i, \xi(i)}) \simeq L(Y_{\tau x_i}Y_{x_i} \mathfrak{m}') \simeq L(Y_{\tau x_i}Y_{x_i}) \otimes L(\mathfrak{m}') .   $$
 In particular this implies 
 $$ \tchi_q(L(\mbeta)) = Y_{\tau x_i} \tchi_q(L(\mathfrak{m}')) .  $$
  Because of the form of $\mathfrak{m}'$, we see that the monomial $ \mbeta A_{i}^{-1} = Y_{\tau x_i}\mathfrak{m}'A_i^{-1}$ cannot appear on the right hand side; however, it has to appear on the left hand side by Lemma~\ref{lem : monomial in q-character}, hence a contradiction. 

   Assume now that $i = i(\beta)$ is such that there is an oriented path 
   $$  k \rightarrow \cdots \rightarrow i \rightarrow i' \rightarrow \cdots \rightarrow j $$
   in $\Qbeta$ with $a_k = \cdots = a_i=1$,  $a_{i'} = \cdots = a_j = 2$ and $j$ is  a sink in $\Qbeta$ so that $\mbeta$ is of the form $Y_{\tau x_j}^2Y_{x_i}\mathfrak{m}'$. It is straightforward to check that 
   $$ \mbeta Y_{x_i} = \mathfrak{m}_{\gamma} \mathfrak{m}'' \qquad \gamma := \beta - \sum_{j \rightsquigarrow i} \alpha_j  $$
   where $\mathfrak{m}''$ is a dominant monomial dividing $\mbeta$ and containing only variables of the form $Y_{x_p}$ or $Y_{\tau x_q}$ with $q$ such that there is no oriented path between $q$ and $i$. In particular $\gamma$ has height strictly less than that of $\beta$ and thus by the induction assumption we have that $L(\mathfrak{m}_{\gamma}) \simeq L[\gamma]$. As $i \notin \mathrm{Supp}(\gamma)$ (because $a_i = 1$), this implies in particular that the  monomial 
   \begin{equation} \label{eq : monomial}
        (A_j^{-1} \cdots A_{i'}^{-1})^2 A_i^{-1}
   \end{equation}
   does not appear in the  renormalized (truncated) $q$-character of $L(\mathfrak{m}_{\gamma})$. On the other hand, if $L$ is any fundamental representation whose highest $l$-weight divides  $\mathfrak{m}''$, then the renormalized truncated $q$-character of $L$ is either $1$ (if $L \simeq L(Y_{x_p})$ for some $p$) or contains only monomials divisible by $A_q^{-1}$ if $L \simeq L(Y_{\tau x_q})$.  But as mentioned above, there is no oriented path between $i$ and $q$ and hence the  renormalized $q$-character of $L$ cannot contain~\eqref{eq : monomial} nor any monomial dividing it. Consequently, we obtain that the monomial~\eqref{eq : monomial} cannot appear in the renormalized $q$-character of $L(\mathfrak{m}_{\gamma} \mathfrak{m}'')$. So, if $L(\mbeta) \otimes L(Y_{x_i})$ was simple, we would have
   $$ L(\mbeta) \otimes L(Y_{x_i}) \simeq L(\mbeta Y_{x_i}) \simeq L(\mathfrak{m}_{\gamma} \mathfrak{m}'')$$
   so that the monomial~\eqref{eq : monomial} would not appear in the renormalized $q$-character of $L(\mbeta)$. This is a contradiction by Lemma~\ref{lem : monomial in q-character}.
   It remains to deal with the case where $M(\beta) = \emptyset$, i.e. $a_k=2$ for all $k \in \mathrm{Supp}(\beta)$. In that case we obviously have that $\beta = 2 \gamma$ where $\gamma$ is a multiplicity-free positive root and we get that $\mbeta = \mathfrak{m}_{\gamma}^2$. Moreover we obviously have $Q_{\gamma} = \Qbeta$ and by definition of $i(\beta)$ we have that  $i$ is a sink in $Q_{\gamma}$. We then have 
   \begin{align*}
       \bdelta(L(Y_{x_i}) , L(\mbeta)) &= \bdelta(L(Y_{x_i}) , L(\mathfrak{m}_{\gamma}^2)) \\
       &= \bdelta(L(Y_{x_i}) , L(\mathfrak{m}_{\gamma})^{\otimes 2})  \quad \text{as all simple modules in $\Cxi^{(1)}$ are real} \\
       &= 2 \bdelta(L(Y_{x_i}) , L(\mathfrak{m}_{\gamma}))  \quad  \text{by Theorem~\ref{thm : KKOP}(iv)} \\
       & \neq 0
   \end{align*}
   as $L(Y_{x_i})$ and $L(\mathfrak{m}_{\gamma})$ do not commute by what has been done in the first case. This concludes the proof of the proposition. 
\end{proof}

   \begin{prop} \label{prop : length two}
     Let $\beta \leq \theta$ and $i  := i(\beta)$. Then the tensor product $L(\mbeta) \otimes L(Y_{x_i})$ is of length two in the HL category $\Cxi^{(1)}$.   
   \end{prop}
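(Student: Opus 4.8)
The plan is to show that $\bdelta\bigl(L(\mbeta),L(Y_{x_i})\bigr)=1$, where $\bdelta$ denotes the Kashiwara--Kim--Oh--Park invariant from Theorem~\ref{thm : KKOP}; since every simple object of $\Cxi^{(1)}$ is real (Theorem~\ref{thm : thm HL/BC for C1} together with the results of Kashiwara--Kim--Oh--Park), Theorem~\ref{thm : KKOP}(ii) then immediately yields that $L(\mbeta)\otimes L(Y_{x_i})$ has length two. The lower bound $\bdelta\bigl(L(\mbeta),L(Y_{x_i})\bigr)\ge 1$ is free: the tensor product is reducible by Proposition~\ref{prop : tensor product is not simple}, so Theorem~\ref{thm : KKOP}(i) forbids $\bdelta=0$. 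Everything thus reduces to the upper bound $\bdelta\bigl(L(\mbeta),L(Y_{x_i})\bigr)\le 1$, which I would establish case by case along the lines of the proof of Proposition~\ref{prop : tensor product is not simple}: a ``generic'' case, where $a_i=1$ (automatic in type $A_n$, $\beta$ being then multiplicity-free), and the case $\beta=2\gamma$ with $\gamma$ a multiplicity-free root.

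In the generic case I would apply Theorem~\ref{thm : KKOP}(v) with the fundamental representation $L(Y_{x_i})$. Since $\mbeta$ and $Y_{x_i}$ are dominant, the head of $L(\mbeta)\otimes L(Y_{x_i})$ is $L(\mbeta\,Y_{x_i})$, so Theorem~\ref{thm : KKOP}(v) reads
\[
\bdelta\bigl(L(\mbeta\,Y_{x_i}),L(Y_{x_i})\bigr)=\max\!\bigl(0,\ \bdelta(L(\mbeta),L(Y_{x_i}))-1\bigr),
\]
whence it suffices to prove that $L(\mbeta\,Y_{x_i})\otimes L(Y_{x_i})$ is simple. Reading Lemma~\ref{lem : isom between leading objects} at the level of $l$-weights gives $\mbeta\,Y_{x_i}=\bigl(Y_{i,\xi(i)-2}Y_{i,\xi(i)}\bigr)^{\epsilon_i}\,\mathfrak{m}_{\beta-\boldsymbol{\dim}I_i^{\beta}}$, and $L\bigl(Y_{i,\xi(i)-2}Y_{i,\xi(i)}\bigr)$ is the Kirillov--Reshetikhin module categorifying the frozen variable $X_i$, which strongly commutes with every simple object of $\Cxi^{(1)}$ (Remark~\ref{rk : frozens in abelian monoidal categorifications}). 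So it is enough to show that $L(\mathfrak{m}_{\gamma'})\otimes L(Y_{x_i})$ is simple, where $\gamma':=\beta-\boldsymbol{\dim}I_i^{\beta}$ has vanishing $i$-coefficient; this is the technical heart of the argument. When the fundamental factors of $\mathfrak{m}_{\gamma'}$ individually commute with $L(Y_{x_i})$, this follows from Theorem~\ref{thm : Fujita} and the subadditivity of $\bdelta$ (Theorem~\ref{thm : KKOP}(iii),(iv)): here one uses that $\{x_j\}_{j\in I}$ is a section of $\ZQ$, so the $M_{x_j}$ form a slice and hence $\mathrm{Ext}^1_{\DbRepQ}(M_{x_j},M_{x_k})=\mathrm{Ext}^1_{\DbRepQ}(M_{x_k},M_{x_j})=0$ for all $j,k$, and an Auslander--Reiten computation in $\ZQ$ (resting on the minimality of $a_i$ and of $r_i^{\beta}$ for $i=i(\beta)$) shows the remaining factors $L(Y_{\tau x_k})$ also commute with $L(Y_{x_i})$; in the residual situations (occurring only in type $D_n$) one needs instead that the cluster monomial $L(\mathfrak{m}_{\gamma'})$ is compatible with the initial cluster variable $x_i$, extracted from a finer analysis of the dominant monomial $\mathfrak{m}_{\gamma'}$ parallel to Lemma~\ref{lem : monomial in q-character}.

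For the case $\beta=2\gamma$, recall from the proof of Proposition~\ref{prop : tensor product is not simple} that $\mbeta=\mathfrak{m}_{\gamma}^{\,2}$, that $i=i(\gamma)$ is a sink of $Q_{\gamma}=\Qbeta$, and that $\gamma$ has strictly smaller height than $\beta$, hence is covered by the induction on height underlying the proof of Theorem~\ref{thm : Euler characteristics vs q-characters}. Applying the generic case to $\gamma$ gives $\bdelta\bigl(L(\mathfrak{m}_{\gamma}),L(Y_{x_i})\bigr)=1$, so Theorem~\ref{thm : KKOP}(ii) provides a short exact sequence $0\to A\to L(\mathfrak{m}_{\gamma})\otimes L(Y_{x_i})\to L(\mathfrak{m}_{\gamma}Y_{x_i})\to 0$ with $A$ simple, and both $A$ and $L(\mathfrak{m}_{\gamma}Y_{x_i})$ strongly commute with $L(\mathfrak{m}_{\gamma})$. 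Tensoring this sequence with $L(\mathfrak{m}_{\gamma})$ (the tensor product being exact) and using that $L(\mathfrak{m}_{\gamma})$ is real, so that $L(\mathfrak{m}_{\gamma})^{\otimes 2}\simeq L(\mathfrak{m}_{\gamma}^{\,2})=L(\mbeta)$, one obtains $0\to L(\mathfrak{m}_{\gamma})\otimes A\to L(\mbeta)\otimes L(Y_{x_i})\to L(\mathfrak{m}_{\gamma})\otimes L(\mathfrak{m}_{\gamma}Y_{x_i})\to 0$, whose two outer terms are simple by the strong commutation just recalled; hence $L(\mbeta)\otimes L(Y_{x_i})$ is of length two.

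I expect the main obstacle to be the Auslander--Reiten and combinatorial step of the generic case, namely establishing simplicity of $L(\mathfrak{m}_{\gamma'})\otimes L(Y_{x_i})$ — in particular the type-$D_n$ situations where $\gamma'$ still carries support adjacent to $i$, so that a crude application of the subadditivity of $\bdelta$ is too weak and one must argue through the cluster-monomial structure of $L(\mathfrak{m}_{\gamma'})$ (or directly through the poles of the relevant $R$-matrix). This is precisely where the defining minimality of $i(\beta)$ inside $I(\beta)$ and the restriction to types $A_n$ and $D_n$ (trees, at most one oriented path between two vertices, root coefficients $\le 2$) are genuinely used. A minor ancillary point, needed to keep the surviving $\bdelta$'s from exceeding $1$, is the standard fact that a tensor product of two fundamental representations is either simple or of length two.
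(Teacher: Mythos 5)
Your overall skeleton is right: reduce to $\bdelta\bigl(L(\mbeta),L(Y_{x_i})\bigr)=1$, get the lower bound from Proposition~\ref{prop : tensor product is not simple} via Theorem~\ref{thm : KKOP}(i), and handle $\beta=2\gamma$ by tensoring the length-two sequence for $\gamma$ with the real module $L(\mathfrak{m}_{\gamma})$ --- that last case matches the paper's argument essentially verbatim. The problem is the upper bound in the generic case, which is where all the content lies and where your route breaks down. By applying Theorem~\ref{thm : KKOP}(v) ``in reverse'' you reduce the bound $\bdelta\le 1$ to the simplicity of $L(\mathfrak{m}_{\beta-\boldsymbol{\dim}I_i^{\beta}})\otimes L(Y_{x_i})$. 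But that statement is exactly the compatibility of the two cluster variables $x[\beta-\boldsymbol{\dim}I_i^{\beta}]$ and $x_i$ appearing on one side of the exchange relation being categorified --- it is at least as hard as the proposition itself, it is not covered by the height induction (the paper's induction gives control of $L(\mathfrak{m}_{\gamma})$ for $\gamma$ of smaller height, not of its tensor products with $L(Y_{x_i})$), and you explicitly leave it unproved, flagging the type-$D_n$ situations as an open obstacle. You also need the converse of Theorem~\ref{thm : KKOP}(i) (simple tensor product implies $\bdelta=0$), which is true for real simples but not part of the paper's stated toolkit.

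The paper avoids this entirely by bounding $\bdelta\bigl(L(Y_{x_i}),L(\mbeta)\bigr)$ \emph{directly} from the structure of $\mbeta$ itself, rather than of the mutated monomial. When $i$ is a sink in $\Qbeta$ with $a_i=1$, one writes $\mbeta=Y_{\tau x_i}\mathfrak{m}'$ where $\mathfrak{m}'$ involves only variables $Y_{x_p}$ and $Y_{\tau x_q}$ with no oriented path from $i$ to $q$; Theorem~\ref{thm : Fujita} then gives $\bdelta\bigl(L(Y_{x_i}),L(\mathfrak{m}')\bigr)=0$, and writing $L(\mbeta)=\mathrm{hd}\bigl(L(Y_{\tau x_i})\otimes L(\mathfrak{m}')\bigr)$ for a suitable PBW order, subadditivity (Theorem~\ref{thm : KKOP}(iii)) yields $\bdelta\bigl(L(Y_{x_i}),L(\mbeta)\bigr)\le\bdelta\bigl(L(Y_{x_i}),L(Y_{\tau x_i})\bigr)=1$. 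When $a_i=1$ but $M(\beta)$ contains no sink, one has $\mbeta=Y_{\tau x_j}^2Y_{x_i}\mathfrak{m}'$ and the same commutation of $L(\mathfrak{m}')$ with $L(Y_{x_i})$ reduces the bound to $\bdelta\bigl(L(Y_{\tau x_j}^2Y_{x_i}),L(Y_{x_i})\bigr)=\max\bigl(0,2\bdelta(L(Y_{\tau x_j}),L(Y_{x_i}))-1\bigr)=1$ via Theorem~\ref{thm : KKOP}(iv),(v). This is the single idea your proposal is missing: the only factor of $\mbeta$ that fails to commute with $L(Y_{x_i})$ is the $Y_{\tau x_i}$ (resp.\ $Y_{\tau x_j}^2Y_{x_i}$) piece, whose $\bdelta$ against $L(Y_{x_i})$ can be computed exactly, so no simplicity statement about the mutated module is ever needed.
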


\begin{proof}
Let us first assume that $a_i=1$. In that case, we are going to show that $\bdelta(L(Y_{x_i}) , L(\mbeta)) = 1$, which will imply the desired statement by Theorem~\ref{thm : KKOP}(ii). As Proposition~\ref{prop : tensor product is not simple} implies that $\bdelta(L(Y_{x_i}) , L(\mbeta)) \neq  0$, it actually suffices to prove that $\bdelta(L(Y_{x_i}) , L(\mbeta)) \leq  1$. 
If $i$ is a sink in $\Qbeta$ then $\mbeta = Y_{\tau x_i} \mathfrak{m}'$ where $\mathfrak{m}'$ is a dominant monomial divisible only by $Y_{x_p}$ or $Y_{\tau x_q}$ such that there is no oriented path from $i$ to $q$. Consequently, we can choose an appropriate PBW order on the collection of fundamental representations in $\Cxi^{(1)}$ so that 
$$ L(\mbeta) \simeq \mathrm{hd}(L(Y_{\tau x_i}) \otimes L(\mathfrak{m}')) .   $$
Then, Theorem~\ref{thm : Fujita} implies that $L(Y_{x_i})$ strongly commutes with  any fundamental representation whose highest $l$-weight divides $\mathfrak{m}'$ and hence $L(Y_{x_i})$ also strongly commutes with $L(\mathfrak{m}')$. In particular we get $\bdelta(L(Y_{x_i}) , L(\mathfrak{m}'))= 0$ and hence  we obtain
$$  \bdelta(L(Y_{x_i}) , L(\mbeta)) =  \bdelta(L(Y_{x_i}) ,  \mathrm{hd}( L(Y_{\tau x_i}) \otimes  L(\mathfrak{m}')) \leq \bdelta(L(Y_{x_i}) , L(Y_{\tau x_i})) = 1 . $$
Thus the claim is proved in that case. If there is no sink in $M(\beta)$, then as explained in the proof of Proposition~\ref{prop : tensor product is not simple} we have $\mbeta = Y_{\tau x_j}^2Y_{x_i} \mathfrak{m}'$. For the same reasons as in the previous case we have that $L(\mathfrak{m}')$ strongly commutes with $L(Y_{x_i})$ and thus using Theorem~\ref{thm : KKOP} (v) we obtain 
$$ \bdelta(L(\mbeta), L(Y_{x_i})) \leq \bdelta(L(Y_{\tau x_j}^2Y_{x_i}) , L(Y_{x_i})) = \max \left( 0,  2 \bdelta(L(Y_{\tau x_j}),L(Y_{x_i})) - 1 \right)= 1  $$
where the last equality follows from Theorem~\ref{thm : Fujita} as there is an oriented path from $i$ to $j$ in $Q$. Consequently the desired statement is proved in the case where $a_{i(\beta)} = 1$. 

 Assume now that $a_{i(\beta)} = 2$ which means that $a_k=2$ for all $k \in \mathrm{Supp}(\beta)$. Then as mentioned above we have $\beta = 2 \gamma$ and the arguments from the first case above show that  $\bdelta(L(\mathfrak{m}_{\gamma},L(Y_{x_i})) = 1$, hence by Theorem~\ref{thm : KKOP}(ii) a short exact sequence in $\Cxi^{(1)}$
 $$ 0 \rightarrow  A \rightarrow  L(\mathfrak{m}_{\gamma}) \otimes L(Y_{x_i}) \rightarrow B \rightarrow 0  $$
 with $A$ and $B$ (real) simple modules both strongly commuting with $L(\mathfrak{m}_{\gamma})$ (as well as $L(Y_{x_i})$). In particular, the tensor product $L(\mathfrak{m}_{\gamma})^{\otimes 2} \otimes L(Y_{x_i})$ has exactly two simple components, respectively isomorphic to $L(\mathfrak{m}_{\gamma}) \otimes A$ and $L(\mathfrak{m}_{\gamma}) \otimes B$. In conclusion, we have that $L(\mbeta) \otimes L(Y_{x_i})$ is of length two in this case as well, which concludes the proof of the proposition. 
   \end{proof}

\begin{prop} \label{prop : identity of q-characters}
Denote by $X_i := Y_{\tau x_i}Y_{x_i}$ for each $i \in I$. Let $\beta \leq \theta$ and $i \in I(\beta)$. Then there is an identity of truncated $q$-characters 
$$ \tchi_q(L(\mbeta)) \cdot Y_{x_i} = X_i  \cdot \tchi_q(L(\mathfrak{m}_{\beta - \boldsymbol{\dim}I_i^{\beta}}) + Y(i, \beta) X(i, \beta)  \tchi_q(L(\mathfrak{m}_{\beta - \boldsymbol{\dim}P_i^{\beta}})) $$
where $Y(i, \beta) := [H(i, \beta)]$ and $X(i, \beta) := [K(i, \beta)]$ with the notations of Proposition~\ref{prop : Serre tilting  of leading object}. 
\end{prop}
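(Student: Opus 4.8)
The plan is to read the claimed three--term identity off a length--two short exact sequence in $\Cxi^{(1)}$. By Proposition~\ref{prop : length two} the tensor product $L(\mbeta) \otimes L(Y_{x_i})$ has length two in $\Cxi^{(1)}$, so in the Grothendieck ring we may write
$$ [L(\mbeta)] \cdot [L(Y_{x_i})] = [L(\mbeta Y_{x_i})] + [L(\mathfrak{n})] , $$
where $L(\mbeta Y_{x_i})$ is the unique composition factor whose dominant monomial is $\mbeta Y_{x_i}$ (it occurs with multiplicity one, every other composition factor being strictly smaller in the Nakajima order) and $\mathfrak{n} <_N \mbeta Y_{x_i}$ is a second dominant monomial. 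Applying the ring homomorphism $\tchi_q$ then gives
$$ \tchi_q\big(L(\mbeta)\big) \cdot Y_{x_i} = \tchi_q\big(L(\mbeta Y_{x_i})\big) + \tchi_q\big(L(\mathfrak{n})\big) , $$
and it remains to identify the two simple modules on the right.

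For the first one, Lemma~\ref{lem : isom between leading objects} gives, after passing to $K_0(\HQ^{(1)})$ and using the identification $[Y(x)] = Y_{i,p}$, the monomial identity $\mbeta Y_{x_i} = X_i^{\epsilon_i}\, \mathfrak{m}_{\beta - \boldsymbol{\dim}I_i^\beta}$, where $\epsilon_i = 1$ if $c_i > 0$ and $\epsilon_i = 0$ otherwise. Since $L(X_i) = L(Y_{\tau x_i}Y_{x_i})$ is the Kirillov--Reshetikhin module categorifying a frozen cluster variable, it strongly commutes with every simple object of $\Cxi^{(1)}$ by Remark~\ref{rk : frozens in abelian monoidal categorifications}; hence $L(\mbeta Y_{x_i}) \simeq L(X_i)^{\otimes \epsilon_i} \otimes L(\mathfrak{m}_{\beta - \boldsymbol{\dim}I_i^\beta})$ and therefore $\tchi_q(L(\mbeta Y_{x_i})) = X_i^{\epsilon_i}\, \tchi_q\big(L(\mathfrak{m}_{\beta - \boldsymbol{\dim}I_i^\beta})\big)$, which is the first term of the claimed identity.

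The second module is where the work lies. First I would produce a candidate dominant monomial $\mathfrak{n}_0 := Y(i,\beta)X(i,\beta)\mathfrak{m}_{\beta - \boldsymbol{\dim}P_i^\beta}$ by translating the isomorphism of Proposition~\ref{prop : Serre tilting  of leading object} into an identity in $K_0(\HQ^{(1)})$: iterating Lemma~\ref{lem : mutation for objects in CQ} along the Serre tilting at $Z = \{\tau x_j : j \in \mathrm{out}_{\Qbeta}(i)\}$ shows that $[\mu_Z(\Ybeta \otimes Y(x_i))]$ is $\mbeta Y_{x_i}$ multiplied by a product of factors each of the form $f_j A_j^{-1}$; comparing with the right--hand side of Proposition~\ref{prop : Serre tilting  of leading object} and cancelling the common factor $\prod_{j \in \mathrm{out}_{\Qbeta}(i)} f_j$ yields $\mathfrak{n}_0 = \mbeta Y_{x_i} \prod_{j \in \mathrm{out}_{\Qbeta}(i)} A_j^{-1}$, so $\mathfrak{n}_0$ is dominant with $\mathfrak{n}_0 <_N \mbeta Y_{x_i}$. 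One then checks $L(\mathfrak{n}_0) \simeq L(Y(i,\beta)) \otimes L(X(i,\beta)) \otimes L(\mathfrak{m}_{\beta - \boldsymbol{\dim}P_i^\beta})$: the monomial $X(i,\beta)$ is a product of frozen monomials (which commute with everything), while $Y(i,\beta)$ is a product of fundamental monomials $Y_{x_l}$ with $l \notin \mathrm{Supp}(\beta)$, each of which strongly commutes with every fundamental constituent of $L(\mathfrak{m}_{\beta - \boldsymbol{\dim}P_i^\beta})$ by Fujita's criterion (Theorem~\ref{thm : Fujita}), hence with $L(\mathfrak{m}_{\beta - \boldsymbol{\dim}P_i^\beta})$ itself, and Theorem~\ref{thm : KKOP}(i) gives simplicity of the tensor product. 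Finally I would show $\mathfrak{n} = \mathfrak{n}_0$ by revisiting the cases of the proof of Proposition~\ref{prop : length two}: when $i$ is a sink of $\Qbeta$, $L(\mbeta) \otimes L(Y_{x_i})$ is a quotient of $L(Y_{\tau x_i}) \otimes L(Y_{x_i}) \otimes L(\mathfrak{m}')$, and since $L(Y_{\tau x_i}) \otimes L(Y_{x_i})$ has precisely the two composition factors $L(Y_{\tau x_i}Y_{x_i})$ and $L(\mathfrak{e})$ with $\mathfrak{e} := \prod_{\tau x_i \to y} Y_y$ the monomial attached to the middle term of the Auslander--Reiten triangle starting at $\tau x_i$ (cf.\ Lemma~\ref{lem : mutation for objects in CQ}), both of which commute with $L(\mathfrak{m}')$, the only composition factors of that quotient are $L(\mbeta Y_{x_i})$ and $L(\mathfrak{e}\mathfrak{m}') = L(\mathfrak{n}_0)$; as the length is exactly two, $L(\mathfrak{n}) = L(\mathfrak{n}_0)$. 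In the remaining cases (where $i$ is not a sink of $\Qbeta$, or $a_i = 2$ and $\beta = 2\gamma$) one argues in the same spirit, using the length--two exact sequence produced in Proposition~\ref{prop : length two} for the relevant tensor product of fundamental modules (and, when $\beta = 2\gamma$, Theorem~\ref{thm : KKOP}(ii) applied to $L(\mathfrak{m}_\gamma) \otimes L(Y_{x_i})$ and then tensored with $L(\mathfrak{m}_\gamma)$) to read off its second composition factor, which one checks equals $\mathfrak{n}_0$ by the monomial computation above. This gives $\tchi_q(L(\mathfrak{n})) = Y(i,\beta)X(i,\beta)\, \tchi_q\big(L(\mathfrak{m}_{\beta - \boldsymbol{\dim}P_i^\beta})\big)$ and hence the identity.

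The main obstacle is this last identification: matching the purely combinatorial Serre--tilting identity coming from Proposition~\ref{prop : Serre tilting  of leading object} and Lemma~\ref{lem : mutation for objects in CQ} with the actual second composition factor of $L(\mbeta) \otimes L(Y_{x_i})$, uniformly across the casework of the proof of Proposition~\ref{prop : length two}. All the ingredients are available, but keeping the spectral parameters of the $A$--variables on the $\HQ^{(1)}$ side synchronized with the $\bdelta$--computations and the $\mathrm{Ext}^1$--vanishing statements (Theorem~\ref{thm : Fujita}) on the quantum affine side is where the care is needed.
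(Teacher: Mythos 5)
Your skeleton matches the paper's up to the decisive step: both arguments start from the length-two decomposition $[L(\mbeta)]\cdot[L(Y_{x_i})] = [L(\mbeta Y_{x_i})] + [L(\mathfrak{n})]$ supplied by Proposition~\ref{prop : length two}, identify the first summand via the monomial identity underlying Lemma~\ref{lem : isom between leading objects} and the frozenness of $L(Y_{\tau x_i}Y_{x_i})$, and then must determine $\mathfrak{n}$. Your derivation of the candidate $\mathfrak{n}_0 = \mbeta Y_{x_i}\prod_{j \in \mathrm{out}_{\Qbeta}(i)} A_j^{-1}$ by pushing the Serre tilting of Proposition~\ref{prop : Serre tilting  of leading object} into $K_0(\HQ^{(1)})$ via Lemma~\ref{lem : mutation for objects in CQ} is correct and is a clean way to produce the dominant monomial. (Your $X_i^{\epsilon_i}$ is also the honest reading of Lemma~\ref{lem : isom between leading objects}.)

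The gap is in the step $\mathfrak{n} = \mathfrak{n}_0$. In the sink case you assert that both composition factors of $L(Y_{\tau x_i})\otimes L(Y_{x_i})$, namely $L(Y_{\tau x_i}Y_{x_i})$ and $L(\mathfrak{e})$ with $\mathfrak{e} = \prod_{i\to j}Y_{x_j}\prod_{j\to i}Y_{\tau x_j}$, strongly commute with $L(\mathfrak{m}')$. This fails in general: whenever some $j\to i$ has $d_j>0$, the monomial $\mathfrak{m}'$ contains $Y_{x_j}$ while $\mathfrak{e}$ contains $Y_{\tau x_j}$, and $L(Y_{\tau x_j})\otimes L(Y_{x_j})$ has length two by the $T$-system --- which is precisely why the paper's final expression for the second factor involves the Kirillov--Reshetikhin modules $L(Y_{\tau x_j}Y_{x_j})$ for those $j$ rather than a split product of fundamentals. (Concretely: in type $D_4$ with central vertex $2$, arrows $2\to 1$, $2\to 3$, $4\to 2$, and $\beta = \alpha_1+\alpha_2+\alpha_3$, $i=1$, one finds $\mathfrak{e} = Y_{\tau x_2}$ and $\mathfrak{m}' = Y_{\tau x_3}Y_{x_2}$, whose tensor product is not simple.) Consequently the triple product $L(Y_{\tau x_i})\otimes L(Y_{x_i})\otimes L(\mathfrak{m}')$ may have more than two composition factors, and your subquotient argument only yields the upper bound $\mathfrak{n} \leq_N \mathfrak{n}_0$; it does not exclude that $\mathfrak{n}$ is strictly smaller in the Nakajima order. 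The paper closes exactly this gap with Lemma~\ref{lem : monomial in q-character} combined with the induction hypothesis: the monomial $\mbeta Y_{x_i}A_i^{-1}$ must occur in $\tchi_q(L(\mbeta))\cdot Y_{x_i}$, cannot occur in $X_i\,\tchi_q(L(\mathfrak{m}_{\beta - \boldsymbol{\dim}I_i^{\beta}}))$ because $i$ lies outside $\mathrm{Supp}(\beta - \boldsymbol{\dim}I_i^{\beta})$, hence occurs in $\tchi_q(L(\mathfrak{n}))$, giving the reverse bound $\mathfrak{n}_0 \leq_N \mathfrak{n}$ and hence equality. You never invoke Lemma~\ref{lem : monomial in q-character} or any substitute lower bound, so this identification is missing; the same issue recurs, in aggravated form, in the non-sink and $\beta = 2\gamma$ cases that you defer to ``the same spirit.''
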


\begin{proof}
    Assume that $i$ is a sink in $\Qbeta$ and $a_i=1$. By Proposition~\ref{prop : length two}, we have an identity in $K_0(\Cxi^{(1)})$:
    $$ [L(\mbeta)] [L(Y_{x_i})] = [L(\mbeta Y_{x_i})] + [M] $$
    where $M$ is a  simple module in $\Cxi^{(1)}$ whose highest $l$-weight needs to be determined. Before doing so, we can already note that, using the same notations as above, we have 
    $$ \mbeta Y_{x_i} = X_i \mathfrak{m}_{\beta - \sum_{j \rightsquigarrow i} \alpha_j} .  $$
    By Lemma~\ref{lem : monomial in q-character}, the monomial $\mbeta A_i^{-1}$ must appear in the (truncated) $q$-character of $L(\mbeta)$. However, as $a_i = 1$ we have that $i \notin \mathrm{Suppp}(\beta - \sum_{j \rightsquigarrow i} \alpha_j)$. Hence by the induction assumption, $ \mathfrak{m}_{\beta - \sum_{j \rightsquigarrow i} \alpha_j} \cdot A_i^{-1} $ cannot appear in the truncated $q$-character of $L(\mathfrak{m}_{\beta - \sum_{j \rightsquigarrow i} \alpha_j}) $ and thus has to appear in that of $M$. Now, we note that
    $$ \mbeta Y_{x_i} A_i^{-1} = \prod_{i \rightarrow j} Y_{x_j} \cdot \prod_{i \leftarrow j}Y_{\tau x_j}  \mathfrak{m}'  $$
    which is obviously dominant. Therefore it has to be the highest $l$-weight of $M$. We are now going to relate it to $\mathfrak{m}_{\beta-\alpha_i}$. By definition, we have that
    $$ \mathfrak{m}_{\beta - \alpha_i} = \prod_{k \in I} Y_{\tau x_k}^{c'_k} Y_{x_k}^{d'_k}  $$
    with $c'_i = c_i -1 = 0 = d'_i$ and $c'_j-d'_j = c_j-d_j + 1$ if $i \leftarrow j$, $c'_j=c_j, d'_j=d_j$ otherwise. In other words, the only difference between $\mathfrak{m}'$ and $\mathfrak{m}_{\beta - \alpha_i}$ is at the vertices $j$ such that $i \leftarrow j$, where $c'_j = c_j +1, d'_j=d_j=0$ if $c_j \geq 0$, while $c'_j=c_j=0, d'_j = d_j-1$ if $d_j>0$. Thus we see that 
    $$ \mathfrak{m}' \cdot \prod_{ \substack{i \leftarrow j \\ c_j-d_j \geq 0 }} Y_{\tau x_j} = \mathfrak{m}_{\beta - \alpha_i}  \cdot   \prod_{ \substack{i \leftarrow j \\  c_j-d_j<0}} Y_{x_j} . $$
    This yields 
    $$ \mbeta Y_{x_i} A_i^{-1} = \mathfrak{m}_{\beta-\alpha_i} \cdot  \prod_{i \rightarrow j} Y_{x_j} \cdot  \prod_{ \substack{i \leftarrow j \\  c_j-d_j<0}} X_j . $$
    Finally, we have that for each $j$ with $i \rightarrow j$, the fundamental representation $L(Y_{x_j})$ strongly commutes with $L(\mathfrak{m}_{\beta-\alpha_i})$ because any $l$ such that $j \rightsquigarrow l$ necessarily lies outside the support of $\beta$.  Hence we obtain 
    $$ M \simeq L(\mbeta Y_{x_i}A_i^{-1}) \simeq \bigotimes_{i \rightarrow j}L(Y_{x_j}) \otimes \bigotimes_{\substack{i \leftarrow j \\ a_j < \sum_{j \rightarrow k} a_k}} L(Y_{\tau x_j}Y_{x_j}) \otimes L(\mathfrak{m}_{\beta - \alpha_i})  $$
    and the proposition is proved in this case. The proof in the other cases is done in a similar way, combining with the same calculations as those performed throughout the proof of Proposition~\ref{prop : Serre tilting  of leading object}.  
\end{proof}

 \begin{proof}[\textbf{Proof of Theorem~\ref{thm : Euler characteristics vs q-characters}}]
   It follows from the constriction of $\Cbeta$ (cf. Section~\ref{sec : construction of Cbeta} and more specifically Proposition~\ref{prop : morphism of chain complexes}) that there is a distinguished triangle in $\KQ^{(1)}$ 
   $$ D_{\bullet} \rightarrow \Cbeta \otimes \mathbf{H}_i \rightarrow C_{\bullet} \rightarrow D_{\bullet}[1] $$
   where $C_{\bullet}$ and $D_{\bullet}$ are respectively given by  
   $$C_{\bullet} :=  \mathbf{K}_i^{\otimes \epsilon_i} \otimes C_{\bullet}[\beta - \boldsymbol{\dim}I_i^{\beta}] \qquad  D_{\bullet} :=  \mathbf{F}(i, \beta) \otimes  \mathbf{H}(i, \beta) \otimes  \mathbf{K}(i, \beta) \otimes  C_{\bullet}[\beta - \boldsymbol{\dim}P_i^{\beta}] .   $$
   This implies that the Euler characteristics of the chain complexes $\Cbeta$ (after specializing the classes of $F_i , i \in I$ to $-1$) satisfy the same identities as those appearing in Proposition~\ref{prop : identity of q-characters}. It remains to check that $\chi(\Cbeta)_{\mid [F_i] = -1}$ agree with $\tchi_q(L(\mbeta))$ when $\beta$ is a simple root, which is straightforward. As a consequence of this, we can conclude that $L(\mbeta) \simeq L[\beta]$ in the Hernandez-Leclerc category $\Cxi^{(1)}$. In other words, the simple module of dominant monomial $\mbeta$ categorifies the cluster monomial of $d$-vector $\beta$. Indeed, the identity $\chi(\Cbeta)_{\mid [F_i] = -1} = \tchi_q(L(\mbeta))$ implies that the lowest $l$-weight monomial $\mathfrak{m}'_{\beta}$ of $\tchi_q(L(\mbeta))$ is related to $\mbeta$ by 
   $$  \mathfrak{m}'_{\beta} = \mbeta \times \prod_{i \in I}A_i^{-a_i} . $$
   As it follows from the results of Kashiwara-Kim-Oh-Park that all simple modules in $\Cxi^{(1)}$ are cluster monomials,  the discussion above implies in particular that if $\beta$ is a positive root, then the class of $L(\mbeta)$ in $K_0(\Cxi^{(1)})$ has to be a cluster variable, namely that with $d$-vector $\beta$. With the previous notations this means that $L(\mbeta)$ and $L[\beta]$ have same isomorphism class in $K_0(\Cxi^{(1)})$ and hence are isomorphic given that they are simple modules.  This finishes the proof of Theorem~\ref{thm : Euler characteristics vs q-characters}. 
   In conclusion, in addition to Theorem~\ref{thm : Euler characteristics vs q-characters}, we proved the following, agreeing with results by Brito-Chari \cite{BC} in type $A_n$. 

    \begin{thm}
        Assume $\mathfrak{g}$ is of type $A_n$ with $n \geq 1$ or $D_n$ with $n \geq 4$, and let $Q$ be an arbitrary orientation  of the Dynkin diagram of $\mathfrak{g}$. Choose a height function $\xi :  Q_0 \rightarrow \mathbb{Z}$ adapted to $Q$. Given a positive root $\beta = \sum_{i \in I}a_i \alpha_i$, define $b_i := a_i - \sum_{i \rightarrow j}a_j, c_i := (b_i)_+, d_i := (b_i)_-$ and consider the dominant monomial $\mbeta := \prod_{i \in I} Y_{i, \xi(i)-2}^{c_i}Y_{i, \xi(i)}^{d_i}$. Then  the class of $L(\mbeta)$ in $K_0(\Cxi^{(1)})$ is the cluster variable $x[\beta]$ of $\AQ$. 
    \end{thm}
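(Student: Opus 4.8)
The plan is to deduce this statement directly from Theorem~\ref{thm : Euler characteristics vs q-characters}, so that the bulk of the work is already in place and only a short extraction remains. First I would recall that, by construction of $\Cbeta$ (Theorem~\ref{thm : chain complexes Cbeta} and Proposition~\ref{prop : Cbeta as mapping cone}), the degree-zero term is $C_0[\beta] \simeq \Ybeta = \bigotimes_{i \in I} Y(\tau x_i)^{\otimes c_i} \otimes Y(x_i)^{\otimes d_i}$, where the integers $b_i = a_i - \sum_{i \rightarrow j} a_j$, $c_i = (b_i)_+$, $d_i = (b_i)_-$ are exactly those appearing in the statement (this is the content of Section~\ref{sec : (co)leading objects}). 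Applying the ring homomorphism $\iota_Q$ from Theorem~\ref{thm : thm2 intro}, together with the identifications $[Y(x_i)] = Y_{i,\xi(i)}$ and $[Y(\tau x_i)] = Y_{i,\xi(i)-2}$, one sees that $[C_0[\beta]]$ is sent (before any specialization) to the dominant monomial $\mbeta = \prod_{i \in I} Y_{i,\xi(i)-2}^{c_i} Y_{i,\xi(i)}^{d_i}$.

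Next I would invoke Theorem~\ref{thm : Euler characteristics vs q-characters}, which gives $\chi(\Cbeta)_{| \forall i, f_i := -1} = \tchi_q(L[\beta])$. Since $C_n[\beta] = 0$ for $n<0$ and $C_0[\beta]$ contributes $+[C_0[\beta]]$ to the Euler characteristic, the monomial $\mbeta$ occurs in $\tchi_q(L[\beta])$. To identify $\mbeta$ as the highest $l$-weight of $L[\beta]$, I would use strong $\HQ^{(1)}$-exactness of $\Cbeta$: every non-trivial component of a differential is of the form $\eta_i \otimes \mathrm{id}$ with $i \in \mathrm{Supp}(\beta)$, and such a morphism sends an object to one of its Serre tiltings, which under $\iota_Q$ amounts, up to a factor $f_i$, to multiplication by $A_i^{-1}$. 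Hence every Laurent monomial contributed by $C_n[\beta]$ with $n>0$ is strictly below $\mbeta$ in the Nakajima order, so $\mbeta$ is the unique maximal term of $\tchi_q(L[\beta])$; being manifestly dominant, it is then the highest $l$-weight, whence $L[\beta] \simeq L(\mbeta)$ in $\Cxi^{(1)}$.

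Finally I would conclude using Theorem~\ref{thm : thm HL/BC for C1}: the real simple module $L[\beta]$ categorifies the cluster variable $x[\beta]$ of $\AQ$, that is, $[L[\beta]] = x[\beta]$ in $K_0(\Cxi^{(1)})$, where one relies on the Kashiwara-Kim-Oh-Park results \cite{KKOP1,KKOP2} that every simple module of $\Cxi^{(1)}$ is a cluster monomial. Combining this with $L[\beta] \simeq L(\mbeta)$ yields $[L(\mbeta)] = x[\beta]$, as desired.

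The only genuinely delicate point is the Nakajima-order comparison in the third paragraph, namely making precise that the differentials of $\Cbeta$ translate, under $\iota_Q$, into multiplication by the monomials $A_i^{-1}$, so that the degree-zero term is the maximal one. But this comparison has effectively already been carried out in the proof of Theorem~\ref{thm : Euler characteristics vs q-characters}, where it is shown that the lowest $l$-weight of $\tchi_q(L(\mbeta))$ equals $\mbeta \prod_{i \in I} A_i^{-a_i}$; so in fact this step presents no new difficulty, and the statement is a direct corollary of what precedes.
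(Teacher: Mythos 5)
Your proposal is correct and follows the same overall route as the paper: the theorem is extracted from Theorem~\ref{thm : Euler characteristics vs q-characters} together with the Kashiwara--Kim--Oh--Park fact that all simples in $\Cxi^{(1)}$ are cluster monomials. The one place where you genuinely diverge is the identification $L[\beta]\simeq L(\mbeta)$. You read off the \emph{dominant} monomial: $[C_0[\beta]]=[\Ybeta]$ maps to $\mbeta$, and every summand of $C_n[\beta]$ with $n>0$ is a Serre tilting of $C_0[\beta]$ at a nonempty set, which under $\iota_Q$ multiplies the class by a nonempty product of factors $f_j A_{j,\xi(j)-1}^{-1}$ (this is Lemma~\ref{lem : mutation for objects in CQ} read in $K_0$, though the paper never isolates it in this form), so $\mbeta$ is the unique Nakajima-maximal term of $\chi(\Cbeta)_{|f_i=-1}=\tchi_q(L[\beta])$ and hence the highest $\ell$-weight. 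The paper instead reads off the \emph{lowest} $\ell$-weight monomial $\mbeta\prod_i A_i^{-a_i}$ of $\tchi_q(L(\mbeta))$, identifies $\beta$ as the $d$-vector, and invokes the parametrization of cluster variables by denominators; note also that in the paper the identification $L(\mbeta)\simeq L[\beta]$ is woven into the induction proving Theorem~\ref{thm : Euler characteristics vs q-characters} (via Propositions~\ref{prop : tensor product is not simple}--\ref{prop : identity of q-characters}), whereas you treat that theorem as a black box, which is legitimate once it is proved. Your top-weight argument is arguably the cleaner extraction, at the cost of making explicit the translation of Serre tiltings into $A^{-1}$-factors; both identifications are sound.
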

 \end{proof}

\end{document}